\tikzstyle{block}=[draw opacity=0.7,line width=1.4cm]
\setlist[enumerate]{itemsep=0mm}
    \def\ps@pprintTitle{%
       \let\@oddhead\@empty
       \let\@evenhead\@empty
       \let\@oddfoot\@empty
       \let\@evenfoot\@oddfoot
    }
\newcommand{\ie}{i.\,e.}
\newcommand{\eg}{e.\,g.}
\def\Lifts{{\cal L}}
\def\TO{\mathop{\mathrm{TO}}\nolimits}
\def\Forb{\mathop{\mathrm{Forb_{he}}}\nolimits}
\def\Forbi{\mathop{\mathrm{Forb_e}}\nolimits}
\def\dom{\mathop{\mathrm{Dom}}\nolimits}
\def\Forbm{\mathop{\mathrm{Forb_m}}\nolimits}
\def\Forbh{\mathop{\mathrm{Forb_h}}\nolimits}
\def\Age{\mathop{\mathrm{Age}}\nolimits}
\def\Cl{\mathop{\mathrm{Cl}}\nolimits}
\def\ACl{\mathop{\mathrm{Acl}}\nolimits}
\def\Rel{\mathop{\mathrm{Rel}}\nolimits}
\def\Str{\mathop{\mathrm{Str}}\nolimits}
\def\CSP{\mathop{\mathrm{CSP}}\nolimits}
\def\sh{\mathop{\mathrm{Sh}}\nolimits}
\def\ordclass#1{\vv{#1}}
\def\ordclasssup#1#2{\vv*{#1}{#2}}
\def\str#1{\mathbf {#1}}
\def\arity#1{\mathop{\mathrm{arity}}\nolimits(\rel{}{#1})}
\def\farity#1{\mathop{\mathrm{arity}}\nolimits(\func{}{#1})}
\def\nbrel#1#2{R\ifstrempty{#1}{}{_{#1}}\ifstrempty{#2}{}{^{#2}}}
\def\rel#1#2{\nbrel{\ifstrempty{#1}{}{\str{#1}}}{#2}}
\def\func#1#2{\nbfunc{\ifstrempty{#1}{}{\str{#1}}}{#2}}
\def\nbfunc#1#2{F\ifstrempty{#1}{}{_{#1}}\ifstrempty{#2}{}{^{#2}}}
\def\extl#1#2{Q\ifstrempty{#1}{}{_{#1}}\ifstrempty{#2}{}{^{#2}}}
\def\ext#1#2{\extl{\ifstrempty{#1}{}{\str{#1}}}{#2}}
\def\F{{\mathcal F}}
\def\K{{\mathcal K}}
\def\Piece{{\mathfrak P}}
\def\APiece{{\mathfrak A}}
\def\PieceEq{{\mathcal P}}
\def\Incompatible{{\cal I}}
\def\Fraisse{Fra\"{\i}ss\' e}
\theoremstyle{plain}
\newtheorem{theorem}{Theorem}[section]
\newtheorem{corollary}[theorem]{Corollary}
\newtheorem{prop}[theorem]{Proposition}
\newtheorem{observation}[theorem]{Observation}
\newtheorem{lemma}[theorem]{Lemma}
\theoremstyle{definition}
\newtheorem{definition}[theorem]{Definition}
\newtheorem{example}[theorem]{Example}
\newtheorem*{remark*}{Remark}
\newtheorem*{claim*}{Claim}
\newtheorem{claim}[theorem]{Claim}
\theoremstyle{remark}
\newtheorem*{remark}{Remark}
\begin{document}
\begin{frontmatter}

\title{All those Ramsey classes\\
{(Ramsey classes with closures\\ and forbidden homomorphisms)}}

\author[1]{Jan Hubi\v cka\corref{cor1}%
\fnref{fn1,fn2}}
\ead{hubicka@kam.mff.cuni.cz}
\author[2]{Jaroslav Ne\v set\v ril\fnref{fn2}}
\ead{nesetril@iuuk.mff.cuni.cz}
\address[1]{Charles University, Faculty of Mathematics and Physics, Department of Applied Mathematics (KAM), Malostransk\' e n\' am. 25, 118 00 Praha 1, Czech Republic}
\address[2]{Charles University, Faculty of Mathematics and Physics, Computer Science Institute of Charles University (IUUK), Malostransk\' e n\' am. 25, 118 00 Praha 1, Czech Republic}

\cortext[cor1]{Corresponding author}
\fntext[fn1]{Supported  by  project  18-13685Y  of  the  Czech  Science Foundation (GA\v CR).}
\fntext[fn2]{In the final stages both authors were supported by ERC Synergy DYNASNET grant.}


\begin{abstract}
We prove the Ramsey property for classes of ordered structures with closures
and given local properties.  This generalises earlier results: the Ne\v set\v
ril--R\"odl Theorem, the Ramsey property of partial orders and metric spaces as
well as the authors' Ramsey lift of bowtie-free graphs.  We use this framework
to solve several open problems and give new examples of Ramsey classes. Among
others, we find Ramsey lifts of convexly ordered $S$-metric spaces and prove
the Ramsey theorem for finite models (\ie{}\ structures with both functions and
relations) thus providing the ultimate generalisation of the structural Ramsey
theorem.  Both of these results are natural, and easy to state, yet  their
proofs involve most of the theory developed here.

We also characterise Ramsey lifts of classes of structures defined by finitely
many forbidden homomorphisms and extend this to special cases of classes with
closures.  This has numerous applications.  For example, we find Ramsey lifts
of many Cherlin--Shelah--Shi classes.
\end{abstract}
\begin{keyword}
Ramsey class, $S$-metric spaces, Ramsey lift, Ramsey expansion, structural Ramsey theory, algebraic closure, forbidden homomorphisms, partite construction
\end{keyword}
\end{frontmatter}
\eject
\tableofcontents
\clearpage
\section {Introduction}
Extending classical results, structural Ramsey theory emerged at
the beginning of 1970's in a series of papers
\cite{Graham1971,Graham1972,Nevsetvril1976,Halpern1966,Abramson1978,Milliken1979}.
This development is outlined in \cite{Graham1990} and~\cite{Nevsetvril1995}. A proper foundation for the area was given when
the notions of a Ramsey class, the $\str{A}$-Ramsey property and the ordering
property~\cite{Leeb,Nevsetvril1976} were isolated.  However, the list of Ramsey classes, which
may be seen as top of the line among the various Ramsey properties, was originally somewhat limited. This is no surprise due to the connection with ultrahomogeneous structures~\cite{Nevsetvril1989a}: all Ramsey classes of undirected graphs have been known since 1977~\cite{Nevsetvril1977} (recently, a full classification was also given for directed graphs~\cite{Jasinski2013}). This connection led to the classification
programme of Ramsey classes~\cite{Nevsetril2005} and, in an important new twist, to the connection with topological dynamics and ergodic theory~\cite{Kechris2005}. Thanks to intensive research, we know many more examples of Ramsey classes nowadays.

This paper is a contribution to this development. We present general theorems showing that classes satisfying given local properties are Ramsey. These are far-reaching generalisations of the authors' solution to the bowtie-free problem~\cite{Hubivcka2014}. This development also led to rethinking some of the fundamentals of Ramsey theory. This is outlined in this introduction.

Let us start with the key definition of this paper.  Let $\K$ be a class of structures endowed with embeddings between its members (mostly the embeddings will be clear from the context). For objects $\str{A},\str{B}\in \K$ denote by ${\str{B}\choose \str{A}}$ the set of all sub-objects of $\str{B}$, which are isomorphic to $\str{A}$. (By a sub-object we mean that the inclusion is an embedding.) Using this notation, the definition of a Ramsey class gets the following form:

\medskip

A class $\mathcal C$ is a \emph{Ramsey class} if for every two objects $\str{A}\in \mathcal C$ and $\str{B}\in\mathcal C$ and for every positive integer $k$ there exists an object $\str{C}\in\mathcal C$ such that the following holds: For every partition of ${\str{C}\choose \str{A}}$ into $k$ classes there is $\widetilde{\str B} \in {\str{C}\choose \str{B}}$ such that ${\widetilde{\str{B}}\choose \str{A}}$ belongs to one class of the partition.  It is usual to shorten the last part of the definition to $\str{C} \longrightarrow (\str{B})^{\str{A}}_k$.

\subsection*{Which classes are Ramsey?}
In other words, which classes allow such a generalisation of the Ramsey theorem?

These questions may be less elusive than it seems at first glance as we can use the above-mentioned connection between Ramsey classes and ultrahomogeneous structures.
The \emph{Ramsey classification programme} was depicted in~\cite{Nevsetril2005} by the following diagram:

\begin{center}
\begin{tikzpicture}[auto,
    box/.style ={rectangle, draw=black, thick, fill=white,
      text width=9em, text centered,
      minimum height=2em}]
     \tikzstyle{line} = [draw, thick, -latex',shorten >=2pt];
    \matrix [column sep=5mm,row sep=3mm] {
      \node [box] (Ramsey) {Ramsey\\ classes};
      &&\node [box] (amalg) {amalgamation classes};
      \\
\\
      \node [box] (lift) {special structures};
      &&\node [box] (lim) {ultrahomogeneous structures};
      \\
    };
    \begin{scope}[every path/.style=line]
      \path (Ramsey)   -- (amalg);
      \path (amalg)   -- (lim);
      \path (lim)   -- (lift);
      \path (lift)   -- (Ramsey);
    \end{scope}
  \end{tikzpicture}
\end{center}

Here is the Ramsey classification programme in words: Under mild assumptions, every Ramsey class leads to an amalgamation class (by~\cite{Nevsetvril1989a}, and in full generality by~\cite{Nevsetril2005,Kechris2005}) and amalgamation classes in turn correspond to (infinite) ultrahomogeneous structures (\Fraisse{} limits~\cite{Fraisse1953}), which are the objects of interest of the (Lachlan--Cherlin) classification programme of ultrahomogeneous structures~\cite{Lachlan1980,Cherlin1998,Cherlin2013}.

However, not every ultrahomogeneous structure gives a Ramsey class. We often need to make the structure even more uniform and rigid by adding some additional information (such as ordering).
For such special ultrahomogeneous structures we can then hope to prove the last implication.

Recently, this programme took a more concrete form~\cite{Bodirsky2011a,The2013b,Melleray2015} asking whether every $\omega$-categorical ultrahomogeneous structure $\str{A}$ has a finite (or more generally precompact) expansion (called a \emph{lift} in this paper) so that the corresponding class of all its finite substructures (called its \emph{age}) is Ramsey.

If such a (more concrete) approach were true then the lack of symmetry (expressed by ultrahomogeneity) and lack of rigidity (expressed by special lifts) would be the only obstacles to being Ramsey and the Ramsey classification programme.
However, Evans~\cite{Evans} recently found examples of ultrahomogeneous structures (based on Hrushovski's predimension constructions) which have no ``good'' Ramsey lift
(that is, the lifted class  adds only finitely many additional relations of each arity, \ie{}\ a \emph{precompact lift}, see Definition~\ref{defn:precompact}).  This indicates that the answer to the classification programme  may be more complicated than originally thought. For a refinement of~\cite{Evans} using the main result of this paper, see~\cite{Evans2}.

\medskip

As mentioned, amalgamation property is the central necessary condition for a class to be Ramsey.
The main result of this paper (Theorem~\ref{thm:mainstrongclosures}) gives a sufficient structural
condition for a class of ordered structures to be Ramsey. The
condition can be seen as a strengthening of amalgamation (and we call it \emph{$(\mathcal
R,\mathcal U)$-multiamalgamation}) involving an explicit closure description $\mathcal U$ and additional assumptions about completions relative to a given Ramsey class $\mathcal R$. 
Theorem~\ref{thm:mainstrongclosures} is inspired by our recent result~\cite{Hubivcka2014} that bowtie-free graphs have a precompact Ramsey lift.
This is another example of a combinatorial phenomenon dear to P.~Erd\H os: A seemingly special problem may lead to a rich theory.

We also prove Theorem~\ref{thm:mainstrong} which, somewhat surprisingly, gives sufficient conditions for a subclass of a Ramsey class to be Ramsey: local finiteness (see Definition~\ref{def:localfinite}) and strong amalgamation are enough.
Both the aforementioned theorems have a form of an implication: To show that a class $\mathcal K$ is Ramsey one needs a class $\mathcal R\supseteq \mathcal K$ which is known to be Ramsey.
Our Ramsey theorem for finite models (Theorem~\ref{thm:models}) provides such Ramsey class $\mathcal R$, even for languages containing both relations and (partial) functions.
\medskip

Structural Ramsey theory uses  the partite construction as its main proof technique. It was developed by Ne\v set\v ril and R\"odl in a series of papers~\cite{Nevsetvril1976,Nevsetvril1977,Nevsetvril1979,Nevsetvril1981,Nevsetvril1982,Nevsetvril1983,Nevsetvril1984,Nevsetvril1987,Nevsetvril1989,Nevsetvril1990,Nevsetvril2007}. We use a form of partite construction with unary closures as introduced in our earlier paper~\cite{Hubivcka2014} extending it to non-unary closures and also further generalising the iterated partition construction~\cite{Nevsetvril2007} to a local amalgamation argument. In a way, our paper is further evidence for the surprising effectivity of the partite construction in structural Ramsey theory.
This paper gives the presently most general formulation of the partite construction. Of course one could formulate this in categorical terms (as opposed to the model-theoretic language used here) but it remains to be seen if such a translation would produce any new interesting Ramsey classes. Nevertheless, Theorems~\ref{thm:mainstrong} and~\ref{thm:mainstrongclosures} present a unified approach to many ad-hoc applications of the partite construction.

We develop a
method for giving an explicit Ramsey lift for classes defined by forbidden
homomorphism-embeddings.  Generalising Ramsey lifts of classes defined by forbidden homomorphisms from a finite set~\cite{Nevsetvril}
and bowtie-free graphs~\cite{Hubivcka2014}, we give a sufficient condition to being Ramsey
for classes defined by forbidden homomorphisms from an infinite set as well as for classes of structures with functions. As a consequence of this we prove that all classes defined by means of forbidden homomorphisms from finitely many structures have a precompact Ramsey expansion (Corollary~\ref{cor:forbH}).
For classes defined by forbidden monomorphisms the situation is much more complex (even on the side of universality, where undecidability is conjectured) and we essentially prove that the Ramsey property in many instances does not present any new restrictions (see Theorem~\ref{thm:CSSramsey}).

In Section~\ref{sec:lifts} we work with classes of structures determined by a (possibly infinite) set of forbidden homomorphisms (or, more precisely, homomorphism-embeddings, a more restricted notion of homomorphism which is an
embedding on irreducible structures).
Such classes were studied earlier (\eg{}\ in~\cite{Komjath1988, Komjath1999, Cherlin1999}), however, in order to reach the level of description needed for Ramsey constructions (particularly for the partite construction) we have to describe our classes more explicitly. This leads to the notions of \emph{pieces} and \emph{witnesses}  (see Definition~\ref{defn:piece}) inspired by our earlier papers~\cite{Hubicka2013,Hubicka2009}.
The whole process can be described as \emph{homogenisation} (a term coined in~\cite{Covington1990}) and it amounts to lifting the class to an amalgamation class.

Many known amalgamation classes are in fact (or can be easily lifted to) multiamalgamation classes. This allows us to give  multiple applications of the main results in Section~\ref{sec:examples}. Our starting point is the Ne\v set\v ril--R\" odl theorem~\cite{Nevsetvril1977} (here Theorem~\ref{thm:NR}). Our examples of Ramsey classes include known examples such as (finite) acyclic graphs and partially ordered sets with linear extension~\cite{Nevsetvril1984}, ordered metric spaces~\cite{Nevsetvril2007} or convexly ordered $\mathcal H$-colourable graphs. Many new examples follow. In particular, we fully characterise Ramsey lifts of metric spaces with a given closed set of distances (Section~\ref{sec:metric2}, Theorem~\ref{thm:Smetric}), thereby solving a problem from~\cite{The2010} and contributing to a problem from~\cite{Kechris2005}. We also consider classes with function symbols and give the first examples of classes defining partial orders not only on vertices, but also on $n$-tuples and neighbourhoods.
These examples also provide a better understanding of the nature of Theorems~\ref{thm:mainstrong} and~\ref{thm:mainstrongclosures}.
As a consequence we are able to prove the Ramsey property for structures with a linear order on both vertices and relations (Theorem~\ref{thm:TO}).

\medskip

As has been well known since the beginnings of structural Ramsey theory, 
orderings of structures play a special role. In fact, Ramsey
classes always fix a linear order (see \eg{}\ \cite{Kechris2005}). 
We cannot
escape this here.  Theorems~\ref{thm:mainstrong} and~\ref{thm:mainstrongclosures} take the form of 
implication and we thus do not have to speak about ordering at all. It is implicit
and will be mentioned in examples illustrating general results.  In 
Theorem~\ref{thm:NRsimple} and
Section~\ref{sec:lifts} we incorporate the ordering into the language. In
Section~\ref{sec:examples} we relate this to the more traditional approach by considering Ramsey lifts
which add the order.

Some of the results of this paper were outlined in our conference paper~\cite{hubivcka2015ramsey}.

The rich spectrum of examples of Ramsey classes presented in this paper should, we hope, convince the reader that Ramsey classes are by no means isolated outliers.

\subsection{Preliminaries}
\label{sec:perliminaries}
While structural Ramsey theory was developed primarily for graphs, hypergraphs and relational structures,
we use the following generalisation of model-theoretic structures which use both relations and partial functions. 

Let $L=L_\mathcal R\cup L_\mathcal F$ be a language with relational symbols $\rel{}{}\in L_\mathcal R$ and function symbols $F\in L_\mathcal F$, each having associated \emph{arity} denoted by $\arity{}$ for relations and $\farity{}$ for functions.
An \emph{$L$-structure} $\str{A}$ is a structure with {\em vertex set} (or \emph{domain}) $A$, functions $\func{A}{}\colon \dom(\func{A}{})\to A$, $\dom(\func{A}{})\subseteq A^{\farity{}}$, $\func{}{}\in L_\mathcal F$ and relations $\rel{A}{}\subseteq A^{\arity{}}$, $\rel{}{}\in L_\mathcal R$.
The elements of $A$ are called \emph{vertices}.

 The language is usually fixed and understood from the context (and it is in most cases denoted by $L$).  If $A$ is finite, we say that $\str A$ is a \emph{finite structure}. We consider only structures with countably (that is, finitely or countably infinitely) many vertices.
The class of all (countable) $L$-structures will be denoted by $\Str(L)$.
If the language $L$ consists only of relational symbol, we call it a \emph{relational language} and the $L$-structures are called \emph{relational structures}.

Let $\str{A}$ and $\str{B}$ be $L$-structures.
A~\emph{homomorphism} $f\colon \str{A}\to \str{B}$ is a mapping $f\colon A\to B$ such that  for every $\rel{}{}\in L_\mathcal R$ and $\func{}{}\in L_\mathcal F$ we have:
\begin{enumerate}
\item $(x_1,x_2,\ldots, x_{\arity{}})\in \rel{A}{}\implies (f(x_1),f(x_2),\ldots,f(x_{\arity{}}))\in \rel{B}{}$,
\item $f(\dom(\func{A}{}))\subseteq \dom(\func{B}{})$, and
for every $(x_1,x_2,\allowbreak \ldots, x_{\farity{}})\in \dom(\func{A}{})$ it holds that
$$f(\func{A}{}(x_1,x_2,\allowbreak \ldots, x_{\farity{}}))=\func{B}{}(f(x_1),f(x_2),\ldots,f(x_{\farity{}})).$$
\end{enumerate} For a subset $A'\subseteq A$ we denote by $f(A')$ the set $\{f(x): x\in A'\}$ and by $f(\str{A})$ the homomorphic image of a structure $\str{A}$.

 If $f$ is injective, it is called a \emph{monomorphism}. A monomorphism $f$ is an \emph{embedding} if for every $\rel{}{}\in L_\mathcal R$ and $\func{}{}\in L_\mathcal F$ we have:
\begin{enumerate}
\item $(x_1,x_2,\ldots, x_{\arity{}})\in \rel{A}{}\iff (f(x_1),f(x_2),\ldots,f(x_{\arity{}}))\in \rel{B}{}$, and
\item
$(x_1,x_2,\ldots, x_{\farity{}})\in\dom(\func{A}{}) \iff (f(x_1),f(x_2),\ldots,f(x_{\farity{}}))\in \dom(\func{B}{}).$
\end{enumerate}
  If the inclusion $A\subseteq B$ is an embedding, we say that $\str{A}$ is a \emph{substructure} (or a \emph{subobject}) of $\str{B}$. For an embedding $f\colon\str{A}\to \str{B}$ we say that $\str{A}$ is \emph{isomorphic} to $f(\str{A})$ and $f(\str{A})$ is also called a \emph{copy} of $\str{A}$ in $\str{B}$. Thus $\str{B}\choose \str{A}$ is defined as the set of all embeddings of $\str{A}$ in $\str{B}$ (which may be viewed as the set of copies of $\str{A}$ in $\str{B}$).

Notice that an $L$-structure $\str{A}$  is a substructure of $\str{B}$ if $A\subseteq B$ and all relations and functions of $\str{B}$ restricted to $A$
are precisely the corresponding relations and functions of $\str{A}$. In particular, if some $n$-tuple
$\vv{t}$ of vertices of $A$ is in $\dom(\func{B}{})$ then it is also in
$\dom(\func{A}{})$ and $\func{A}{}(\vv{t})=\func{B}{}(\vv{t})$. 
This implies that $\str{B}$ does not induce a substructure on every subset of $\str{B}$ but only on `closed' sets defined as follows.
Given $\str{A}\in \K$ and $B\subseteq A$, the \emph{closure of $B$ in $\str{A}$}, denoted by $\Cl_\str{A}(B)$, is the smallest substructure of $\str{A}$ containing $B$.
A set $B\subseteq A$ is \emph{closed} if $B=\Cl_\str{A}(B)$.
A closure in $\str{A}$ is {\em unary} if for every $B\subseteq A$ it holds that $\Cl_\str{A}(B)=\bigcup_{v\in B}\Cl_\str{A}(v)$.
\medskip

We now review some more standard model-theoretic notions (see
\eg{}\ \cite{Hodges1993}).

\begin{figure}
\centering
\includegraphics{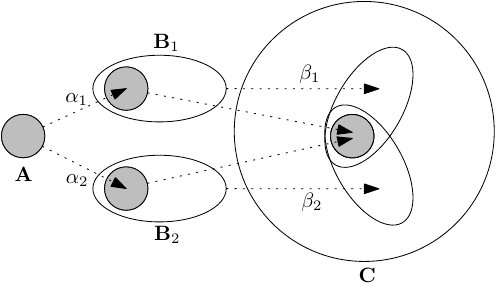}
\caption{An amalgamation of $\str{B}_1$ and $\str{B}_2$ over $\str{A}$.}
\label{amalgamfig}
\end{figure}
Let $\str{A}$, $\str{B}_1$ and $\str{B}_2$ be $L$-structures and $\alpha_1\colon\str{A}\to\str{B}_1$ and $\alpha_2\colon\str{A}\to\str{B}_2$ embeddings. Then
every $L$-structure $\str{C}$
 with embeddings $\beta_1\colon \str{B}_1 \to \str{C}$ and
$\beta_2\colon \str{B}_2\to\str{C}$ such that $\beta_1\circ\alpha_1 =
\beta_2\circ\alpha_2$ is called an \emph{amalgamation} of $\str{B}_1$ and $\str{B}_2$ over $\str{A}$ with respect to $\alpha_1$ and $\alpha_2$ (see Figure~\ref{amalgamfig}).
We will call $\str{C}$ simply an \emph{amalgamation} of $\str{B}_1$ and $\str{B}_2$ over $\str{A}$
(as in the most cases $\alpha_1$ and $\alpha_2$ can be chosen to be inclusion embeddings).

We say that an amalgamation is \emph{strong} if it holds that $\beta_1(x_1)=\beta_2(x_2)$ if and
only if $x_1\in \alpha_1(A)$ and $x_2\in \alpha_2(A)$.  Less formally, a strong
amalgamation glues together $\str{B}_1$ and $\str{B}_2$ with an overlap no
greater than the copy of $\str{A}$ itself.  A strong amalgamation is \emph{free} if $C=\beta_1(B_1)\cup\beta_2(B_2)$ and there are no tuples in any relations of $\str{C}$ spanning both vertices of
$\beta_1(B_1\setminus \alpha_1(A))$ and $\beta_2(B_2\setminus \alpha_2(A))$.

An \emph{amalgamation class} is an isomorphism-closed class $\K$ of finite $L$-structures satisfying the following three conditions:
\begin{enumerate}
\item {\bf Hereditary property:} For every $\str{A}\in \K$ and a substructure $\str{B}$ of $\str{A}$ we have $\str{B}\in \K$;
\item {\bf Joint embedding property:} For every $\str{A}, \str{B}\in \K$ there exists $\str{C}\in \K$ such that $\str{C}$ contains both $\str{A}$ and $\str{B}$ as substructures;
\item {\bf Amalgamation property:} 
For $\str{A},\str{B}_1,\str{B}_2\in \K$ and $\alpha_1$ embedding of $\str{A}$ into $\str{B}_1$, $\alpha_2$ embedding of $\str{A}$ into $\str{B}_2$, there is $\str{C}\in \K$ which is an amalgamation of $\str{B}_1$ and $\str{B}_2$ over $\str{A}$ with respect to $\alpha_1$ and $\alpha_2$.
\end{enumerate}

We will refine amalgamation classes in Definition~\ref{def:multiamalgamation}. The full role of amalgamation classes will be discussed in Section~\ref{sec:lifts}.

\medskip
We consider graphs to be special cases of relational structure in the language containing one symmetric binary relation $E$.
For a structure $\str{A}$ the \emph{Gaifman graph} (in combinatorics
often called a \emph{2-section}) is the graph $\str{G}_\str{A}$ with
vertex set $A$ and $\{x,y\}$  forming an edge of $\str{G}_\str{A}$ if and only if one of the following is satisfied:
\begin{enumerate}
\item There exists a tuple $\vv{t}\in \rel{A}{},\rel{}{}\in L$ such that $x,y\in \vv{t}$,
\item there exists a tuple $\vv{t}\in \dom(\func{A}{}),\func{}{}\in L$ such that $x,y\in \vv{t}$, or
\item there exists a tuple $\vv{t}\in \dom(\func{A}{}),\func{}{}\in L$ such that $x\in \vv{t}$ and $y=\func{A}{}(\vv{t})$ or vice versa.
\end{enumerate}
A structure $\str{A}$ is \emph{connected} if the
Gaifman graph of $\str{A}$
is a connected graph.  A subset $R$ of $A$ is a \emph{(vertex) cut} of $\str{A}$ if
$R$ is closed in $\str{A}$ (that is, $\str{A}$ induces a substructure on $R$) and $\str{G}_\str{A}\setminus R$ is disconnected.
(By $\str{G}_\str{A}\setminus R$ we mean the graph induced by $\str{G}_\str{A}$ on $A\setminus R$.)
Note that not every graph cut of $\str{G}_\str{A}$ is a cut of $\str{A}$.

\section{Construction of Ramsey classes}
\label{sec:results}
The main results of this paper will be introduced here. 
Several old and new concepts have to be recalled and introduced in this section.
\subsection{Statement of the results}
\label{sec:statement}

\begin{figure}
\centering
\includegraphics{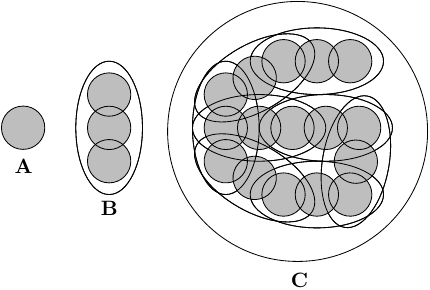}
\caption{Construction of a Ramsey object by multiamalgamation.}
\label{fig:multiamalgam}
\end{figure}
First, we develop a generalised notion of amalgamation which will serve as a useful tool for our constructions of Ramsey objects. As schematically depicted in Figure~\ref{fig:multiamalgam}, Ramsey objects are a result of
amalgamation of multiple copies of a given structure which are all performed at once. In a non-trivial class this leads to many problems. Instead of working with complicated amalgamation diagrams, we  split the process into two steps --- \emph{construction} of the (up to
isomorphism unique) free amalgamation (which yields an incomplete, or ``partial'', structure) followed by a \emph{completion}.

\begin{definition}
\label{def:irreducible}
An $L$-structure $\str{A}$ is \emph{irreducible} if it cannot be created as a free amalgamation of two of its proper substructures.
\end{definition}
Irreducible structures will be our building blocks: In structural Ramsey theory we are fortunate that the structures we are considering are (or may be interpreted as) irreducible (for example, thanks to a linear ordering).

\begin{example}
Observe that a relational structure $\str{A}$ is irreducible if and only if for every pair of distinct vertices $u$, $v$ there is tuple $\vv{t}\in \rel{A}{}$
(of some relation $\rel{}{}\in L$) such that $\vv{t}$ contains both $u$ and $v$.  For $L$-structures in languages containing functions this is not necessarily true.
An example of an irreducible structure in a language with one binary relational symbol $\rel{}{}$ and one unary function symbol $\func{}{}$ is a structure $\str{A}$ (depicted in Figure~\ref{irreduciblefig}) on vertices $A=\{a,b,c,d\}$ where $(a,b)\in \rel{A}{}$,
$\dom(\func{A}{})=\{a,b\}$ and $\func{A}{}(a)=c$, $\func{A}{}(b)=d$. This structure is reducible if $\func{}{}$ is seen as a relation rather than a function.
\begin{figure}
\centering
\includegraphics{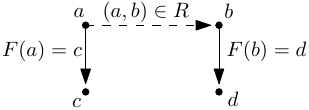}
\caption{An example of an irreducible structure with a binary relation $R$ and a unary function $F$.}
\label{irreduciblefig}
\end{figure}
\end{example}

We introduce the following stronger notion of homomorphism.
\begin{definition}
\label{def:homembed}
 A homomorphism
$f\colon \str{A}\to\str{B}$ is a \emph{homomorphism-embedding}  if $f$ restricted to any irreducible substructure of $\str{A}$      
is an embedding.
\end{definition}
While for undirected graphs without loops the notions of homomorphism and homo\-mor\-phism-em\-bed\-ding coincide, even for relational structures they differ
as shown in the following example.
\begin{example}
Consider language $L$ with one binary relation $\rel{}{2}$ and one ternary relation $\rel{}{3}$.
Structure $\str{A}$ such that $A=\{1,2,3\}$, $\rel{A}{2}=\emptyset$, $\rel{A}{3}=\{(1,2,3)\}$ has a homomorphism to structure $\str{B}$ such that $B=\{1,2,3\}$, $\rel{B}{2}=\{(1,2)\}$, $\rel{B}{3}=\{(1,2,3)\}$ (indeed, the identity is such a homomorphism), but no homomorphism-embedding.
\end{example}
Homomorphism-embedding is the right concept for the problem of turning incomplete structures to complete in the following sense.
\begin{definition}
\label{defn:completion}
Let $\str{C}$ be a structure. An irreducible structure $\str{C}'$ is a \emph{completion}
of $\str{C}$ if there exists a homomorphism-embedding $\str{C}\to\str{C}'$.
If there is a homomorphism-embedding $\str{C}\to\str{C}'$ which is injective,
we call $\str{C}'$ a \emph{strong completion}.

Of particular interest will be whether there exists a completion in a given class $\mathcal K$ of structures. In this case we speak about \emph{$\mathcal K$-completion}.
\end{definition}
For classes of irreducible structures, (strong) completion may be seen as a generalisation of (strong) amalgamation: Let $\K$ be such a class. The (strong)
amalgamation property of $\K$ can be equivalently formulated as follows: For $\str{A}$, $\str{B}_1$, $\str{B}_2 \in \K$ and $\alpha_1$ embedding of $\str{A}$ into $\str{B}_1$, $\alpha_2$ embedding of $\str{A}$ into $\str{B}_2$, there is a (strong) $\mathcal K$-completion
of the free amalgamation of $\str{B}_1$ and $\str{B}_2$ over $\str{A}$ with respect to $\alpha_1$ and $\alpha_2$.

Observe that the free amalgamation is not in $\K$ unless the situation is trivial.
Free amalgamation results in a reducible structure as the pairs of vertices  where
one vertex belong to $\str{B}_1\setminus \alpha_1(\str{A})$ and the other to $\str{B}_2\setminus \alpha_2(\str{A})$ are never both contained in a single tuple
of any relation. Such pairs can be thought of as  \emph{holes} and completion is then a process of filling in the
holes to obtain irreducible structures while preserving all embeddings of irreducible structures.

\medskip

For applications, it is important that in many cases the existence of $\K$-completions and strong $\K$-completions coincide. This can be formulated as follows.
\begin{prop}
\label{prop:strongcompletion}
Let $L$ be a language such that all function symbols in $L$ have arity 1 (there is no restriction on relational symbols) and let $\K$ be a hereditary class of finite irreducible $L$-structures with the strong amalgamation property.
Then every finite $L$-structure $\str{A}$ has a $\K$-completion if and only if it has a strong $\K$-completion.
\end{prop}
\begin{proof}
Assume, to the contrary, that there is an $L$-structure $\str{A}$ with no strong
$\K$-completion, an $L$-structure $\str{B}\in \mathcal K$ and a
homomorphism-embedding $f\colon \str{A}\to\str{B}$ (that is, $\str B$ is a $\mathcal K$-completion of $\str A$). Among
all such examples choose one such that there is no $L$-structure $\str A'$ with homomorphism-embeddings $\str A\to\str A'\to \str B$ such that $|\str A'|\leq |\str A|$ and $\str A'\neq \str B$.

\begin{figure}
\centering
\includegraphics{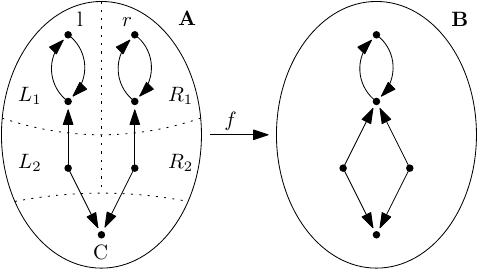}
\caption{An example of a decomposition of a structure $\str{A}$ containing one unary function constructed in the proof of Proposition~\ref{prop:strongcompletion}.}
\label{fig:Merge}
\end{figure}
We decompose the vertex set of $\str{A}$ into five parts denoted by $L_1$, $L_2$, $R_1$, $R_2$, and $C$ as depicted in Figure~\ref{fig:Merge} by the following procedure.

Because $f$ is not a strong $\mathcal K$-completion, we know that there is a pair
of vertices $l\neq r\in A$ such that $f(l)=f(r)$.  Now observe that, by
the non-existence of $\str A'$, for every other pair of vertices $v_1\neq
v_2\in A$ satisfying $f(v_1)=f(v_2)$ it holds that one vertex is in $\Cl_\str{A}(l)$
and the other is in $\Cl_\str{A}(r)$: Indeed, otherwise we could first identify only vertices from $\Cl_\str{A}(l)$ with vertices from $\Cl_\str{A}(r)$, yielding such a structure $\str A'$.

Because vertex closures are irreducible
substructures, we know that $f$ identifies two irreducible substructures
$\str{U}=\Cl_\str{A}(l)$ and $\str{V}=\Cl_\str{A}(r)$ of $\str{A}$ to one and is injective otherwise.

Put $L_1=U\setminus V$ and $R_1=V\setminus U$. Observe that because $l$ and $r$ can be chosen arbitrarily, if a substructure
of $\str{A}$ contains a vertex of $L_1$ it contains all vertices of $L_1$. By symmetry the same holds for
 $R_1$.  Denote by $L_2$ the set of all vertices $v\in A\setminus
L_1$ such that $L_1\subseteq \Cl_\str{A}(v)$.  Analogously denote by $R_2$ the set of
all vertices $v\in A\setminus R_1$ such that $R_1\subseteq \Cl_\str{A}(v)$.  $L_2$
and $R_2$ are disjoint because $f$ is an embedding on irreducible substructures
and thus no vertex closure (which is an irreducible substructure) can contain
both $L_1$ and $R_1$ (as $f(L_1)=f(R_1)$). By a similar irreducibility argument we get that there is no tuple $\vv{t}\in \rel{A}{}$, $\rel{}{}\in L$, containing both a vertex from $L_1\cup L_2$ and a vertex from $R_1\cup R_2$.

Let $C$ be the set of all vertices
whose vertex closure does not contain $L_1$ nor $R_1$, that is, $C=A\setminus (L_1\cup L_2\cup R_1\cup R_2)$.  
Because all functions are unary, $\str{A}$ induces a substructure $\str{C}$ on $C$.
Similarly denote by $\str{A}_l$ the substructure induced by $\str{A}$ on $C\cup L_1\cup L_2$ 
and by $\str{A}_r$ the substructure induced by $\str{A}$ on $C\cup R_1\cup R_2$.

Because $\K$ is hereditary and $f$ is injective on $A\setminus (L_1\cup R_1)$ we know that $\str B$ is a strong $\mathcal K$-completion of $\str{A}_l$, $\str{A}_r$ and $\str{C}$.
Applying the strong amalgamation property of $\K$,
there is $\str{D}\in \mathcal K$ which is a strong amalgamation of $f(\str{A}_l)$ and $f(\str{A}_r)$
over $f(\str{C})$, hence a strong $\K$-completion of $\str A$, which is a contradiction.
\end{proof}
Proposition~\ref{prop:strongcompletion} does not hold for $L$-structures in full generality. 
In the following example one can see that the condition on functions being unary cannot be removed.
\begin{example}[Graphs of girth at least 5]
\label{example:C4}
Consider the class $\mathcal C_{\mathrm{girth}\geq 5}$ of all finite graphs of girth at least 5 (that is, containing no 3-cycles and 4-cycles) seen as relational structures with one binary relation $E$.
This is not a strong amalgamation class of irreducible structures per se, but
can be lifted to a class $\mathcal C^+_{\mathrm{girth}\geq 5}$ by
adding two symbols:
\begin{enumerate}
\item A partial binary function $F$ mapping every pair of disjoint
vertices to the unique vertex connected to both of them if such a vertex
exists (to obtain the strong amalgamation property);
\item a binary relation $R$ containing all pairs of vertices (to obtain irreducibility).
\end{enumerate}

Now observe that the structure $\str{A}$ such that $A=\{1,2,3,4\}$, $E_\str{A} = \{\{1,2\},\allowbreak \{2,3\},\allowbreak \{3,4\},\allowbreak \{4,1\}\}$ (that is, $\str A$ is a graph 4-cycle), $\dom(\func{A}{})=\emptyset$ and $(u,v)\in \rel{A}{}$ if and only if $(u,v)\in E_\str{A}$
 has a $\mathcal C^+_{\mathrm{girth}\geq 5}$-completion (which corresponds to a homomorphism from 4-cycle to an edge) but it has
no strong $\mathcal C^+_{\mathrm{girth}\geq 5}$ completion.  This shows that the assumption of functions being unary
in Proposition~\ref{prop:strongcompletion} is necessary. 

Note that finding a (precompact) Ramsey lift of $\mathcal C^+_{\mathrm{girth}\geq 5}$ is a long standing open
problem (in a special cases stated in~\cite{Nevsetvril1987, Nevsetvril1995})
which cannot be directly solved by applying the results from this paper.
\end{example}
\subsubsection{Ramsey theorem for strong amalgamation classes}
The following is the key definition of this paper. It defines the main property used for obtaining Ramsey classes of $L$-structures.
\begin{definition}
\label{def:localfinite}
Let $\mathcal R$ be a class of finite irreducible structures and let $\mathcal K$ be a subclass of $\mathcal R$. We say
that $\mathcal K$ is a \emph{locally finite subclass} of $\mathcal R$ if for every $\str{C}_0 \in \mathcal R$ there is a finite integer $n = n(\str {C}_0)$ such that 
every structure $\str C$ has a $\K$-completion providing that it satisfies the following:
\begin{enumerate}
\item $\str{C}_0$ is an $\mathcal R$-completion of $\str{C}$,
\item every irreducible substructure of $\str{C}$ is in $\K$, and
\item every substructure of $\str{C}$ with at most $n$ vertices has a $\K$-comple\-tion.
\end{enumerate}
\end{definition}
The true meaning of Definition~\ref{def:localfinite} will be manifested in the examples below and in Section~\ref{sec:examplesstrong}. In most cases we will make use of Proposition~\ref{prop:strongcompletion} and analyse the conditions of Definition~\ref{def:localfinite} for
strong $\K$-completions only.
For the benefit of the reader we now give two simple examples.

\begin{example}[Metric spaces with distances $1$, $2$, $3$, and $4$]
Consider a language $L$ containing binary relations $\rel{}{1}$, $\rel{}{2}$, $\rel{}{3}$, and $\rel{}{4}$ which we understand as
distances. Let $\mathcal R$ be the class of all irreducible finite structures where all four relations are symmetric,
irreflexive and every pair of distinct vertices is in precisely one of relations $\rel{}{1}$, $\rel{}{2}$, $\rel{}{3}$, or $\rel{}{4}$ ($\mathcal R$ may be viewed a class of 4-edge-coloured complete graphs). Let $\mathcal K$ be a subclass of $\mathcal R$ consisting of those structures satisfying
the triangle inequality (\ie{}\ $\mathcal K$ is the class of finite metric spaces with distances 1, 2, 3, and 4).

It is not hard to verify that $L$-structure $\str{C}$ which has a completion to some $\str{C}_0\in  \mathcal R$ (that is, all relations are
symmetric and irreflexive and every pair of distinct vertices is in at most one relation) can be
completed to a metric space if and only if it contains no non-metric triangles (\ie{}\ triangles with
distances 1--1--3, 1--1--4 or 1--2--4) and no 4-cycle with distances 1--1--1--4, see Figure~\ref{fig:4cycles}.
(We prove a generalisation of this fact in Proposition~\ref{prop:sauer1}.)
\begin{figure}
\centering
\includegraphics{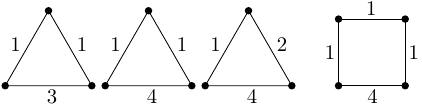}
\caption{Cycles with no completion to a metric space with distances $1,2,3,$ and $4$.}
\label{fig:4cycles}
\end{figure}
 It follows that $\mathcal K$ is a locally
finite subclass of $\mathcal R$ and for every $\str{C}_0 \in \mathcal R$ we can put $n(\str{C}_0) = 4$.
\end{example}

\begin{example}[Metric spaces with distances $1$ and $3$]
\label{example:13}
Now consider the class $\mathcal K_{1,3}$ of all metric spaces which use only distances 1 and 3. It is easy to see
that $\mathcal K_{1,3}$ is not a locally finite subclass of $\mathcal R$. To see that let $\str T\in \mathcal R$ be the triangle with distances 1--1--3. Now consider a cycle $\str C_n$ of length $n$ with one edge of distance three and the others of distance one (as depicted in Figure~\ref{ultra}).
\begin{figure}
\centering
\includegraphics{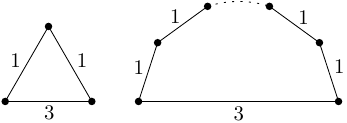}
\caption{Cycles with no completion to a metric spaces with distances 1 and 3.}
\label{ultra}
\end{figure}
$\str{T}$ is a completion of $\str C_n$, however it has no $\mathcal K_{1,3}$-completion. Moreover, every proper substructure of $\str C_n$ (that is, a path consisting of
at most one edge of distance three and others of distance one) does have a $\mathcal K_{1,3}$-completion.
It follows that there is no $n(\str{T})$ and thus $\mathcal K_{1,3}$ is not a locally finite subclass of $\mathcal R$.

We will further discuss metric spaces in Sections~\ref{sec:metric} and~\ref{sec:metric2}.
\end{example}

Our first theorem\footnote{Some papers which use Theorem~\ref{thm:mainstrong} cited it as Theorem~2.1 in the original draft (\eg~\cite{Evans2}).} gives a surprisingly compact sufficient condition
for Ramsey classes:

\begin{theorem}
\label{thm:mainstrong}
Let $L$ be a language, let $\mathcal R$ be a Ramsey class of irreducible finite $L$-structures and let $\K$ be a hereditary
locally finite subclass of $\mathcal R$ with the strong amalgamation property.
Then $\K$ is a Ramsey class.

Explicitly:
For every pair of struc\-tures $\str{A}, \str{B}\in\K$ there exists
a structure $\str{C} \in \K$  such that
$$
\str{C} \longrightarrow (\str{B})^{\str{A}}_2.
$$
\end{theorem}

\begin{remark}[on irreducibility]
The condition on $\mathcal R$ to be a class of irreducible structures may seem
too restrictive. It is however trivially satisfied in all applications we discuss, because it is usually guaranteed by orderings.  It is a
known fact that every Ramsey class (which is an age of a homogeneous structure) fixes a linear order on
vertices, see \eg{}\ \cite{Kechris2005} and \cite{Bodirsky2015} for a combinatorial proof. In such cases 
we can assume that
every structure in the class has a binary relation representing the
order. This order makes the structure irreducible in the sense of
Definition~\ref{def:irreducible}. (In more detail we discuss this in Section~\ref{sec:imaginaries}.)
\end{remark}

Theorem~\ref{thm:mainstrong} is an implication. To show that
a given class $\mathcal K$ of $L$-structures is Ramsey we need a Ramsey class $\mathcal R\supseteq \mathcal K$ such that $\mathcal K$ is locally finite in $\mathcal R$.
The base Ramsey classes can be obtained 
using the Ne\v set\v ril--R\"odl theorem (formulated here as Theorem~\ref{thm:NR}). This theorem can however be applied only if $L$ contains no function symbols.

In order to facilitate base Ramsey classes for general $L$-structures, we prove Theorem~\ref{thm:models}, showing that for every language $L$ containing a binary symbol $\leq$ the class $\ordclass{Str}(L)$
of all finite $L$-structures $\str{A}$ where $\leq_\str{A}$ is a linear ordering of vertices is Ramsey.

Theorem~\ref{thm:models} will be proved using Theorem~\ref{thm:mainstrongclosures} which we introduce in the next section and which may be seen
as the main result of this paper. However, for most applications it is enough (and much more convenient) to
combine Theorems~\ref{thm:mainstrong} and~\ref{thm:models}.
The reader is thus now welcome to skip to Section~\ref{sec:examples} to see multiple
applications of the combination of Theorems~\ref{thm:mainstrong} and~\ref{thm:models}. In particular, notice that
 functions may be necessary while applying Theorem~\ref{thm:mainstrong} even for 
classes of structures in relational languages. This happens, for example, for classes without the strong amalgamation property (the lack of which gives rise to the model-theoretic notion of algebraic closures, see Definition~\ref{def:algebraic}) or classes of structures with definable equivalences (such as the class of metric spaces with distances $1$ and $3$ discussed above, this time giving a link to the model-theoretic notion of elimination of imaginaries, see Section~\ref{sec:imaginaries}).

Theorem~\ref{thm:mainstrongclosures} is more technical and it will be
introduced in the next section. In order to handle functions, we interpret them
as relations with degree restrictions and call this a closure description.

\subsubsection{Ramsey theorem with closures}
The nature of the partite construction (and thus of the proofs in this paper) is to start with a Ramsey theorem, use it as a ``template'' for the partite construction and end up with a stronger Ramsey theorem. In order to introduce functions into the language, we will first understand them as normal relations and use the Ramsey theorem for relational structures as a template upon which we then ensure that the said relations in fact represent functions.

  In this section we introduce the concept of \emph{closure descriptions} which let us speak about
closed substructures in relational languages. Given a closure description
$\mathcal U$ we will define $\mathcal U$-closed structures
and $\mathcal U$-closed substructures, $\mathcal U$-closures, $\mathcal
U$-irreducible structures and $\mathcal U$-homomorphism-embeddings.  All of these
are natural generalisations of the standard notions. The connection to finite models (\ie{}\ structures with
both functions and relations) is exemplified in Proposition~\ref{prop:closures} below.

\begin{definition}
\label{def:closuredesc}
Let $L$ be (not necessarily relational) language.
A~\emph{closure description $\mathcal U$} is a (possibly infinite) set of pairs $(\rel{}{U},\str{R})$ where
$\rel{}{U}\in L$ is a relational symbol of arity $n$ and $\str{R}$ is a non-empty irreducible $L$-structure on vertices $\{1,2,\ldots, m\}$, $m< n$.
We will refer to the relations $\rel{}{U}$ as \emph{closure relations}, and to the structures $\str{R}$ as the \emph{roots of the closures}.
\end{definition}
Given a structure $\str{A}$, a closure description $\mathcal U$ should be understood as follows.
Every pair $(\rel{}{U},\str{R})\in \mathcal U$ declares that the relation $\rel{A}{U}$  of arity $n$ is in fact a partial function $\func{U}{}\colon {\str{A}\choose \str{R}}\to A^{n-|R|}$.
We always assume that
for every $\vv{t}\in \rel{A}{U}$ the first $\vert R\vert $ vertices denote the copy of $\str{R}$ and the remaining $(n-\vert R\vert )$
of the vertices assign a value to the given copy of $\str{R}$.
\begin{definition}
\label{def:uclosed}
Let $L$ be language.
Given an $L$-structure $\str{A}$ and a relation $\rel{A}{}\in L$ of arity $n$,
the \emph{$\rel{A}{}$-out-degree} of
a $k$-tuple $(v_1,v_2,\ldots, v_{k})$ is the number of $(n-k)$-tuples $(v_{k+1},v_{k+2},$ $\ldots, v_{n})$ such that
$(v_1,v_2,\ldots, v_n)\in \rel{A}{}$.

Let $\mathcal U$ be a closure description for $L$.
We say that an $L$-structure $\str{A}$ is \emph{$\mathcal U$-closed} if for every pair $(\rel{}{U},\str{R})\in \mathcal U$
it holds that the $\rel{A}{U}$-out-degree of an $\vert R\vert $-tuple $\vv{t}$ (of vertices of $\str{A}$) is one if and only if $\vv{t}$ represents an embedding of $\str{R}$ to $\str{A}$ and zero otherwise.

Let $\str{A}$ be an  $\mathcal U$-closed structure and $B\subseteq A$. The \emph{$\mathcal U$-closure of $B$ in $\str{A}$},
denote by $\Cl^\mathcal U_\str{A}(B)$,
is the minimal $\mathcal U$-closed substructure of $\str{A}$ containing
$B$.
\end{definition}
The closure description is denoted by $\mathcal U$ (``uz\'av\v er''---Czech word for closure) due to the lack of other letters.

\begin{example}[Interpretation of unary functions in relational language]
Consider a language $L$ consisting of a single unary function $\func{}{}$ and a (relational) language $L'$ consisting of a unary relation $\rel{}{D}$ and a binary relation $\rel{}{U}$.
Given an $L$-structure $\str{A}$ such that for every $v\in \dom(\func{A}{})$ it holds that $F(v)\neq v$, one can construct a relational $L'$-structure $\str{A}'$ on the same vertex set by putting $\rel{}{D}=\dom(\func{A}{})$ and $(u,v)\in \rel{}{U}$ if and only if $\func{A}{}(u)=v$.  Now consider the closure description given by $\mathcal U=\{(\rel{}{U},\str{R})\}$ where $\str{R}$ is the $L'$-structure on the vertex set $\{1\}$ such that $\rel{R}{D}=\{(1)\}$ and $\rel{R}{U}=\emptyset$. Observe that a subset $B\subseteq A$ is closed in $\str{A}$ if and only if it is $\mathcal U$-closed in $\str{A}'$.
It is thus possible to interpret $L$-structures as relational $L'$-structures.

Alternatively, one can view $\str{A}'$ as a directed graph where $\mathcal U$-closed substructures are exactly those induced subgraphs $\str{B}'\subseteq \str A'$ with no directed edges going from $B'$ to $A'\setminus B'$

A similar correspondence between substructures of structures in languages containing functions and $\mathcal U$-closed substructures of relational structures will be established in full generality in Proposition~\ref{prop:closures}. Classes with unary functions are further discussed in Section~\ref{sec:unaryfunctions}.
\end{example}

\begin{remark}
As usual in model theory we introduced functions whose value ranges are singletons. For Ramsey classes this is equivalent to functions from $n$-tuples to $m$-tuples because all
vertices are ordered and thus such function can be translated into $m$ functions to singletons. For closure descriptions we however allow images to be $m$-tuples because
the original motivation for this term is the notion of algebraic closure (see Definition~\ref{def:algebraic}) and in such context mapping $n$-tuples to $m$-tuples is more natural.
\end{remark}
Observe that because roots are non-empty, the empty structure is always $\mathcal U$-closed.
Special cases that are important to us deserve special names: If all roots are singletons (that is, have only one vertex), we speak about \emph{unary closures}. 

With this notion, we can refine our basic concepts of irreducible substructures and ho\-mo\-mor\-phism-embeddings.
\begin{definition}
\label{def:Uirreducible}
\label{def:Uhomomorphism-embedding}
Let $\mathcal U$ be closure description in a (not necessarily relational) language $L$.
An $L$-structure $\str{A}$ is \emph{$\mathcal{U}$-irreducible} if it cannot be created as a free amalgamation of two of its proper $\mathcal{U}$-closed substructures.

A homomorphism
$f\colon \str{A}\to\str{B}$ is a \emph{$\mathcal U$-homomorphism-embedding}  if $f$ restricted to any $\mathcal U$-irreducible substructure of $\str{A}$ is an embedding.

Let $\str{C}$ be a structure. An irreducible structure $\str{C}'$ is a \emph{$\mathcal U$-completion}
of $\str{C}$ if there is a $\mathcal U$-homomorphism-embedding $\str{C}\to\str{C}'$.
If there is a $\mathcal U$-homomorphism-embedding $\str{C}\to\str{C}'$ which is injective,
we call $\str{C}'$ a \emph{strong $\mathcal U$-completion} of $\str C$.

Again, if there exists a $\mathcal U$-completion in a given class $\mathcal K$ of structures, we call it a \emph{$(\mathcal K,\mathcal U)$-completion}.
\end{definition}
Observe that for $\mathcal{U}=\emptyset$ a structure is $\mathcal U$-irreducible if and only if it is irreducible in the sense of Definition~\ref{def:irreducible}. For non-empty $\mathcal U$ this is not necessarily the case.

To make the verifying the existence of a completion easier, we further weaken the notion to the
following variant which is still sufficient for obtaining the Ramsey property:

\begin{definition}
Let $\str{C}$ be a structure and let $\str{B}$ be an irreducible substructure of $\str{C}$.  We
say that an irreducible structure $\str{C}'$ is a \emph{completion of $\str{C}$ with respect to
copies of $\str{B}$} if there exists a function
$f\colon C\to C'$ such that for every  $\widetilde{\str{B}}\in
{\str{C}\choose \str{B}}$ the function $f$ restricted to $\widetilde{B}$
is an embedding of $\widetilde{\str{B}}$ to $\str{C}'$.

If $\str{C}'$ belong to a given class $\K$, then $\str{C}'$ is called \emph{$\K$-completion of $\str{C}$ with respect to copies of $\str{B}$}.
\end{definition}
This is the weakest notion of completion which preserves the Ramsey property for given structures $\str{A}$ and $\str{B}$. Note that $f$ does not need to be a homo\-morphism-embedding (not even a homomorphism).

We now state all the conditions for our main result as one definition:
\begin{definition}
\label{def:multiamalgamation}
Let $L$ be a language, $\mathcal R$ be a Ramsey class of finite irreducible $L$-structures and $\mathcal U$ be a closure description (for $L$).
We say that a subclass $\mathcal K$ of $\mathcal R$  is an \emph{$(\mathcal R,\mathcal U)$-multiamalgamation class} if
the following conditions are satisfied:
\begin{enumerate}
 \item {\bf $\mathcal U$-closed structures:} $\mathcal K$ consists of finite $\mathcal U$-closed $L$-structures.
 \item\label{cond:hereditary} {\bf Hereditary property for $\mathcal U$-closed substructures:} For every $\str{A}\in \K$ and for every $\mathcal U$-closed substructure $\str{B}$ of $\str{A}$ we have $\str{B}\in \K$.
 \item\label{cond:amalgamation} {\bf Strong amalgamation property:}
For $\str{A},\str{B}_1,\str{B}_2\in \K$ and embeddings $\alpha_1\colon\str{A}\to\str{B}_1$, $\alpha_2\colon\str{A}\to\str{B}_2$, there is $\str{C}\in \K$ which is a strong amalgamation of $\str{B}_1$ and $\str{B}_2$ over $\str{A}$ with respect to $\alpha_1$ and $\alpha_2$.
 \item\label{cond:completion} {\bf Locally finite completion property:} Let $\str{B}\in \K$ and $\str{C}_0\in \mathcal R$. 
Then there exists $n=n(\str{B},\str{C}_0)$ such that if a $\mathcal U$-closed $L$-structure $\str{C}$ satisfies the following:
\begin{enumerate}
 \item $\str{C}_0$ is a $\mathcal U$-completion of $\str{C}$, 
 \item every $\mathcal U$-irreducible substructure of $\str{C}$ is in $\mathcal K$, and
 \item every (not necessarily $\mathcal U$-closed) substructure of $\str{C}$ with at most $n$ vertices has a $(\K,\mathcal U)$-completion,
\end{enumerate}
then there exists $\str{C}'\in \K$ which is a completion of $\str{C}$ with respect to copies of $\str{B}$.
\end{enumerate}
\end{definition}
\begin{remark}
We shall see that this seemingly elaborate definition is in fact very flexible and easy to apply.
For an amalgamation class $\mathcal K$ consisting of irreducible structures it is up to interpretation always possible to construct a
closure description $\mathcal U$ such that $\mathcal K$ satisfies the first three conditions in Definition~\ref{def:multiamalgamation}. (The only exception are amalgamation classes whose \Fraisse{} limit contains a nontrivial closure of the empty set. Those can be always corrected by an appropriate interpretation.) Also, as in our definition the empty set is always $\mathcal{U}$-closed, we get the strong joint embedding property: For every $\str{A}, \str{B}\in \K$ there exists $\str{C}\in \K$ such that $\str{C}$ contains both $\str{A}$ and $\str{B}$ as (vertex) disjoint substructures.
It is the locally finite completion property which is the crucial condition for
$\K$ to be a Ramsey class. 
\end{remark}

We can now state our main result\footnote{Some papers which use Theorem~\ref{thm:mainstrongclosures} cited it as Theorem~2.2 of the original draft (\eg~\cite{Evans2}).} as:
\begin{theorem}
\label{thm:mainstrongclosures}
Every $(\mathcal R,\mathcal U)$-multiamalgamation class $\K$ is Ramsey.
\end{theorem}

\subsubsection{Ramsey theorem for finite models}
\label{sec:functions}
Let $L$ be a language containing a binary relation $\leq$. We denote by $\ordclass{\Str}(L)$ the class of all finite $L$-structures (models) $\str{A}\in\Str(L)$ where the set $A$ is linearly ordered by the relation $\leq$.  We show the following theorem.
\begin{theorem}[Ramsey theorem for finite models]
\label{thm:models}
For every language $L$ (possibly involving both relations and functions) containing a binary relation $\leq$ the class $\ordclass{\Str}(L)$ is a Ramsey class.
\end{theorem}
To prove Theorem~\ref{thm:models}, we interpret functions as relations and apply
Theorem~\ref{thm:mainstrongclosures} with an appropriate closure description.
First, we formulate equivalence of closure descriptions and models involving functions.

\begin{prop}
\label{prop:closures}
For every language $L=L_\mathcal R \cup L_\mathcal F$ there is a relational language $L'\supseteq L_\mathcal R$ and a closure
description $\mathcal U$ for $L'$ such that there is function $\Rel$ assigning to every $L$-structure $\str{A}$ an $L'$-structure $\Rel(\str{A})$
on the same vertex set satisfying the following:
\begin{enumerate}
\item $\Rel$ is a bijection between $L$-structures and $\mathcal U$-closed $L'$-structures,
\item $\rel{A}{}=\nbrel{\Rel(\str{A})}{}$ for every $R\in L_\mathcal R$,
\item $f\colon A\to B$ is a homomorphism-embedding from an $L$-structure $\str{A}$ to an $L$-structure $\str{B}$ if and only if it is an $\mathcal U$-homomorphism-embedding
from $\Rel(\str{A})$ to $\Rel(\str{B})$.
\end{enumerate}
\end{prop}
In other words, Proposition~\ref{prop:closures} establishes that $\Rel$ is an isomorphism of the category
of $L$-structures with homomorphism-embeddings and the category of $\mathcal U$-closed $L'$-structures with $\mathcal U$-homomorphism-embeddings.
\begin{proof}
We first define the language $L'$. It will contain $L_\mathcal R$ and furthermore for every $n$-ary function $\func{}{}\in L_\mathcal F$ we add (disjointly) an $n$-ary relational symbol $\rel{}{\func{}{}}$ and all $(j+1)$-ary relational symbols $\rel{}{\func{}{},j,{\vv r}}$, where $1\leq j\leq n$ and $\vv r$ is an $n$-tuple of integers from $\{1,2,\ldots, j\}$ such that the first occurrences of the integers form and increasing sequence and every integer from $\{1,2,\ldots, j\}$ occurs at least once in $\vv r$.

The closure description $\mathcal U$ will consist of all pairs
$(\rel{}{\func{}{},j,{\vv{r}}},\str{R})$, where $\rel{}{\func{}{},j,{\vv{r}}}$ is in $L'$ and
$\str{R}$ is an $L'$-structure on the vertex set $R=\{1,2,\ldots, j\}$ such that
$\vv{r}\in \rel{R}{\func{}{}}$.

For every $L$-structure $\str{A}\in \Str(L)$ we construct
a relational $L'$-structure $\Rel(\str{A})\in \Str(L')$ on
the same vertex set as follows:
\begin{enumerate}
 \item $\nbrel{\Rel(\str{A})}{}=\rel{A}{}$ whenever $\rel{}{}\in L_\mathcal R$,
 \item $\nbrel{\Rel(\str{A})}{\func{}{}}=\dom(\func{A}{})$ for every $\func{}{}\in L_\mathcal F$, and
 \item $(a_1,a_2,\ldots, a_j,b)\in \nbrel{\Rel(\str{A})}{\func{}{},j,{\vv{r}}}$ if and only if $\func{A}{}(a_{r_1},a_{r_2},\ldots, a_{r_n})=b$.
\end{enumerate}

We thus encoded the domain of every function $\func{A}{}$ by the relation $\rel{A}{\func{}{}}$ and
used closure relations $\rel{}{\func{}{},j,{\vv{r}}}$ to represent the function on every tuple in the domain of $\func{A}{}$.
It follows directly from the definitions that $\Rel$ is the desired function.
\end{proof}

Thanks to Proposition~\ref{prop:closures}, it is enough to apply Theorem~\ref{thm:mainstrongclosures}
for relational base Ramsey class $\mathcal R$ which we get from the following unrestricted form of the
Ne\v set\v ril--R\"odl theorem (see Section~\ref{sec:NR} for a more general formulation of this theorem).
\begin{theorem}[Unrestricted Ne\v set\v ril--R\"odl Theorem~\cite{Nevsetvril1977,Abramson1978}]
\label{thm:NRsimple}
For every relational language $L$ containing a binary relation $\leq$ the class $\ordclass{\Str}(L)$ is a Ramsey class.
\end{theorem}
\begin{proof}[Proof of Theorem~\ref{thm:models}]
Given a language $L$, apply Proposition~\ref{prop:closures} to obtain a language $L'$, a closure description $\mathcal U$ and a function $\Rel$.
By Theorem~\ref{thm:NRsimple} we know that $\ordclass{\Str}(L')$ is a Ramsey class and applying
Theorem~\ref{thm:mainstrongclosures} we get that the class of all $\mathcal U$-closed structures
in $\ordclass{\Str}(L')$ is a Ramsey class. To see the locally finite completion property for $n=1$ one can simply put $\str{C'}$ to be $\str{C}$ with $\leq_\str{C}$ completed to linear order. This is always possible because $\str{C}$ has a projection to $\str{C}_0$ which is linearly ordered by $\leq_{\str{C}_0}$.
Because $\Rel$ preservers the relationship of being a substructure, this in turn gives the Ramsey property for $\ordclass{\Str}(L)$.
\end{proof}
\begin{remark}
The proof of the Ramsey theorem for finite models (Theorem~\ref{thm:models}) involves most of the techniques introduced in this paper. It can be generalised further, we however decided to only formulate it in this concise form here. We believe it nicely complements existing results for relational structures (Abramson--Harrington~\cite{Abramson1978}, Ne\v set\v ril--R\"odl~\cite{Nevsetvril1977}). The generalisation to free amalgamation classes has further interesting consequences for the ordering property and is proved in~\cite{Evans3} by applying the main results of this paper.
\end{remark}

\begin{remark}
A Ramsey theorem for structures involving both relations and functions
is given in \cite{Solecki2012}. The notion of functions used
in~\cite{Solecki2012} is however different from the standard model-theoretic one
and corresponds to a combination of relations and unary closures.
The Ramsey property proved in~\cite{Solecki2012} then follows by Theorem~\ref{thm:mainstrongclosures} (but not vice versa).
\end{remark}

\begin{figure}
{
\centering
\begin{tikzpicture}[auto,
    lemma/.style ={rectangle, draw=black,  fill=white,
      text width=8em, text centered,
      minimum height=2em},
    theorem/.style ={rectangle, draw=black, thick, fill=white,
      text width=8em, text centered,
      minimum height=2em},
    block_left/.style ={rectangle, draw=black, thick, fill=white,
      text width=15em, text ragged, minimum height=2em, inner sep=6pt},
    block_noborder/.style ={rectangle, draw=none, thick, fill=none,
      text width=17em, text centered, minimum height=1em},
    block_leftnoborder/.style ={rectangle, draw=none, thick, fill=none,
      text width=9em, text ragged, minimum height=1em},
    block_assign/.style ={rectangle, draw=black, thick, fill=white,
      text width=17em, text ragged, minimum height=3em, inner sep=6pt},
    block_lost/.style ={rectangle, draw=black, thick, fill=white,
      text width=15em, text ragged, minimum height=3em, inner sep=6pt},
      line/.style ={draw, thick, -latex', shorten >=0pt}]
     \tikzstyle{line} = [draw, thick, -latex',shorten >=2pt];
    \matrix [column sep=3mm,row sep=3mm] {
      \node [theorem] (HJ) {Hales--Jewett Theorem};
      &&\node [theorem] (Ramsey) {Ramsey Theorem};
      \\
      &\node [lemma] (Partite Lemma) {Partite Lemma};&\node [lemma] (Partite Construction) {Partite construction};&\node[block_leftnoborder,xshift=-3em] (NRt) {Ne\v{s}et\v ril--\\R\"odl\\ Theorem};\\
      &&\node [theorem] (NR) {Ramsey Property of ordered structures\\(Theorem~\ref{thm:NR})};&&\\
\\
       &\node [lemma] (Partite Construction 1) {Partite construction for $\mathcal U$-substructures\\(Lemma~\ref{lem:preclosures})};\\
      &&\node [lemma] (Partite Construction 2) {$\mathcal U$-closed partite construction\\(Lemma~\ref{lem:closures})};\\
\\
      \node [lemma] (Partite Lemma with Closures){Partite Lemma with closures\\(Lemma~\ref{partlem})};&&\node [lemma] (Iterated) {Iterated partite construction\\(Lemmas~\ref{lem:iteratedpartitestep2} and~\ref{lem:iteratedpartite2})};\\
\\
      \node [theorem] (Res1) {Ramsey property of locally finite strong amalgamation classes\\(\textbf{Theorem~\ref{thm:mainstrong}})};
      &\node [theorem] (Res2) {Ramsey property of multiamalgamation classes\\(\textbf{Theorem~\ref{thm:mainstrongclosures}})};\\
      \node [theorem] (Models) {$\ordclass{\Str}(L)$ is Ramsey\\
				   (\textbf{Theorem~\ref{thm:models}})};
      &\node [theorem] (Lift) {Explicit description of lift of $\Forb(\F)$\\
				   (\textbf{Theorem~\ref{mainthm}})};&
      \node [theorem] (LiftThm) {Ramsey property of lifts of classes $\Forb(\F)$ (\textbf{Theorem~\ref{thm:main}})};\\
    };
    \begin{scope}[every path/.style=line]
      \path (HJ)   -- (Partite Lemma);
      \path (Partite Lemma)   -- (Partite Construction);
      \path (Ramsey)   -- (Partite Construction);
      \path (Partite Construction) -- (NR);
      \draw[dotted] (Partite Lemma.north west) + (-1.5ex,1.5ex) rectangle ([xshift=1.5ex,yshift=-1.5ex]NR.south east);
      \path (HJ) -- (Partite Lemma with Closures);
      \path (Partite Lemma with Closures) -- (Partite Construction 1);
      \path (Partite Lemma with Closures) -- (Partite Construction 2);
      \path (Partite Construction 1) -- (Partite Construction 2);
      \path (NR) -- node{$\str{C}_0\longrightarrow (\str{B})^\str{A}_2$} (Partite Construction 2);
      \path (Partite Construction 2) -- node{$\mathcal U$-closed $\str{C}_1\longrightarrow (\str{B})^\str{A}_2$} (Iterated);
      \path (Partite Lemma with Closures) -- (Iterated);
      \draw[->] (Iterated.south east) to [bend right=90] node[swap]{$n(\str{C_0})\times$} (Iterated.north east);
      \path (Iterated) -- (Res1.north east);
      \path (Iterated) -- (Res2);
      \path (Res2) -- (Models);
      \path (Iterated) -- (LiftThm);
      \path (Lift) -- (LiftThm);
    \end{scope}
  \end{tikzpicture}
}
\caption{The structure of proofs of the main results.}
\label{fig:plan}
\end{figure}
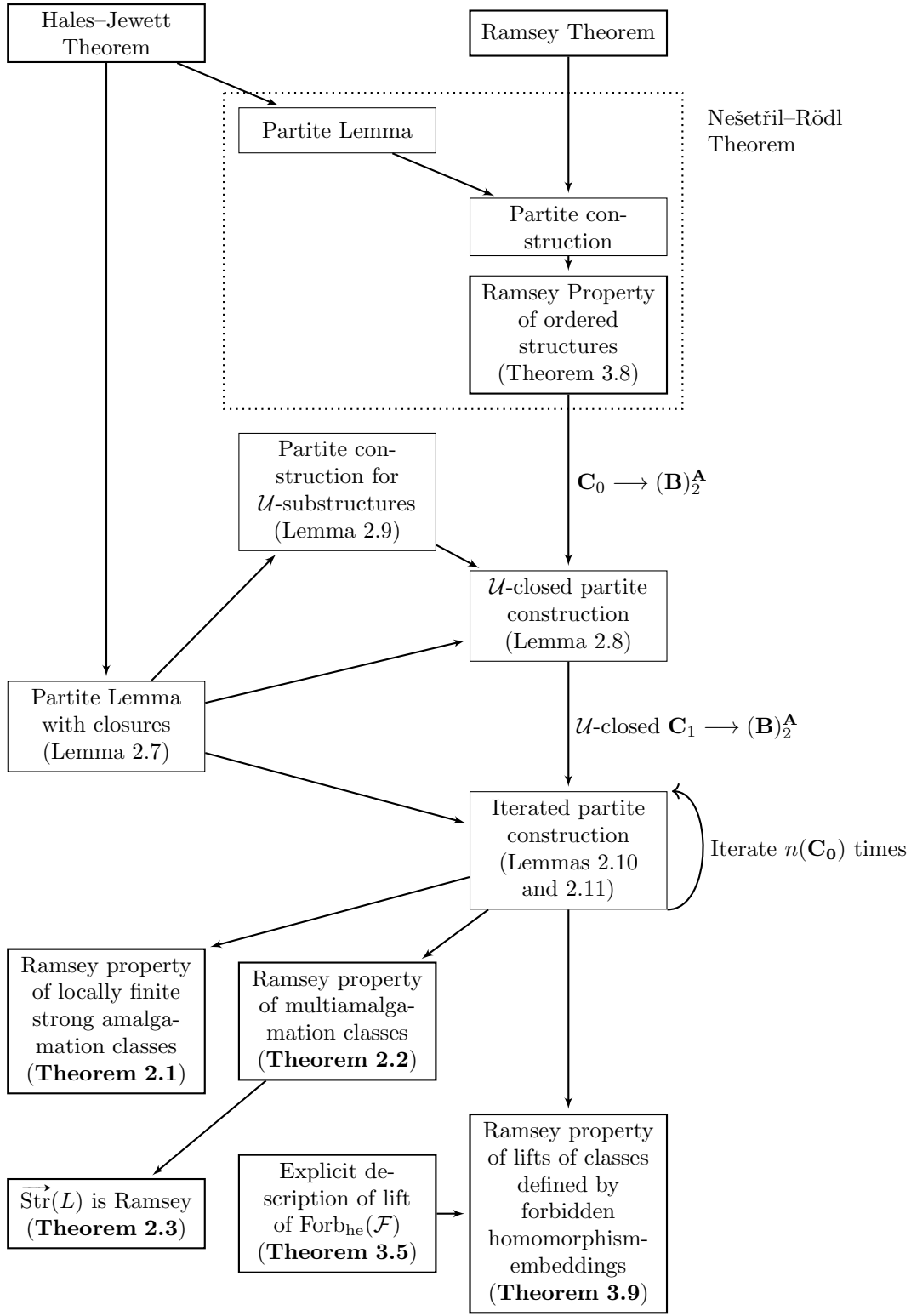

\subsection{Proof structure of Theorems~\ref{thm:mainstrong} and~\ref{thm:mainstrongclosures}}
\label{sec:ramsey}
The overall structure of the proofs of Theorems~\ref{thm:mainstrong} and~\ref{thm:mainstrongclosures}
is depicted in Figure~\ref{fig:plan}. We now further complement it with a few comments which may assist the reader in understanding the proof. 

We give an explicit construction of Ramsey objects.
Using Proposition~\ref{prop:closures}, it suffices to work with relational structures only.
A relational language $L$ will be fixed thorough this section.
Given a Ramsey class $\mathcal R$, its subclass $\mathcal K$ and structures $\str{A},\str{B}\in \K$, we apply to Ramsey property of $\mathcal R$
to obtain $\str{C}_0\longrightarrow (\str{B})^\str{A}_2$. In all applications discussed, this will be done by
an application of the Ne\v set\v ril--R\"odl Theorem (Theorem~\ref{thm:NRsimple}) which is depicted as the first structured Ramsey theorem in Figure~\ref{fig:plan}.
Subsequently, we use three variants of the partite construction to obtain a Ramsey structure $\str{C}$ with the desired properties.
Towards this end, in Sections~\ref{sec:partlem} and~\ref{sec:partiteconstruction} we give a new variant of the partite construction for
classes with closures (generalising our techniques introduced in
\cite{Hubivcka2014} and strengthening them to non-unary closures).  In Section~\ref{sec:iterated} we
introduce the Iterated partite construction  for strong amalgamation classes
(extending results of~\cite{Nevsetvril2007}) and finally we combine
both to obtain our main results in Section~\ref{sec:mainres}.

To construct $\mathcal U$-closed structures (Definition~\ref{def:uclosed}) we
proceed in several steps. The following notions capture two ``weaker'' notions
of closed structures and substructures which will be used in our constructions.
This definition will be used in the iteration of Lemma~\ref{lem:preclosures} and Lemma~\ref{lem:closures} where it is necessary to consider structures
which are not $\mathcal U$-closed, but they satisfy the property for selected substructures.

\begin{definition}
\label{defn:usubstructure}
Let $L$ be language, $\mathcal U$ be a closure description in $L$ and $\str{A}$ a substructure of an $L$-structure $\str{B}$.
We say that $\str{A}$ is a \emph{$\mathcal U$-substructure} of $\str{B}$ if for every pair $(\rel{}{U},\str{R})\in \mathcal U$ it holds that if
a tuple $\vv{t} \in \rel{B}{U}$ has all its root vertices in $A$  then
all vertices of $\vv{t}$ are in $A$.

In other
words there is no vertex $v\in B\setminus A$ with a pair $(\rel{}{U},\str{R})\in \mathcal U$ and a tuple $\vv{s} \in \rel{B}{U}$
containing $v$ such that the first $\vert R\vert $ elements of $\vv{s}$ are in $A$.
\end{definition}
\begin{figure}
\centering
\includegraphics{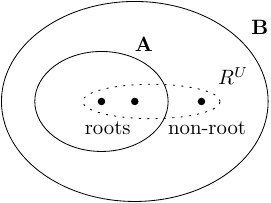}
\caption{$\str{A}$ is not a $\mathcal U$-substructure of $\str{B}$.}
\label{fig:forbidden}
\end{figure}
The forbidden situation is depicted in Figure~\ref{fig:forbidden}.
 The main property
of $\mathcal U$-substructure is captured by the following easy lemma.
\begin{lemma}
\label{lem:usubstructure}
\label{lem:uclosedamalgamation}
For every language $L$ and a closure description $\mathcal U$ in $L$
the following holds:
\begin{enumerate}
\item Let $\str{A}$ be a substructure of a $\mathcal U$-closed $L$-structure $\str{B}$. Then
$\str{A}$ is a $\mathcal U$-substructure of $\str B$ if and only if $\str{A}$ is $\mathcal U$-closed.
\item\label{item:uclosedamalgamation} Let $\str{B}_1$ and $\str{B}_2$ be $\mathcal U$-closed $L$-structures
and $\str{A}$ a $\mathcal U$-closed substructure of both $\str{B}_1$ and $\str{B}_2$.
Further assume that completions in both cases are strong completions.
Then the free amalgamation of $\str{B}_1$ and $\str{B}_2$ over $\str{A}$ is a $\mathcal U$-closed structure.
\end{enumerate}
\end{lemma}
\begin{proof}The proof is easy. In~\ref{item:uclosedamalgamation}, we use the fact that roots in $\mathcal U$ are irreducible structures.
\end{proof}
Another difficulty we need to overcome is the fact that during the partite constructions it is necessary to consider
substructures of $\mathcal U$-closed structures induced on sets which are not $\mathcal U$-closed themselves. Such substructures satisfy the following:
\begin{definition}
Let $\mathcal U$ be a closure description.
We say that $\str{A}$ is \emph{$\mathcal U$-semi-closed} if for every pair
$(\rel{}{U},\str{R})\in \mathcal U$ it holds that the
$\rel{A}{U}$-out-degree of an $\vert R\vert $-tuple $\vv{t}$ of vertices of
$\str{A}$ is at most one if there is an embedding from $\str{R}$ 
to $\vv{t}$ and zero otherwise.
\end{definition}
The following concept of size (which one can view as the smallest size of a generating set) will be the basic parameter for the induction in the iterated partite construction:
\begin{definition}
The \emph{$\mathcal U$-size} of a structure
$\str{B}$ is the number of vertices of the smallest substructure $\str{A}$ of $\str{B}$ such that the $\mathcal U$-closure
of $\str{A}$ in $\str{B}$ is $\str{B}$.
\end{definition}

Observe that for every substructure $\str{B}_0$ of a $\mathcal U$-closed structure $\str{B}$
the $\mathcal U$-size of $\str{B}_0$ is the same as the $\mathcal U$-size of the $\mathcal U$-closure of
$\str{B}_0$ in $\str{B}$.
\subsection{Partite lemma with closures}
\label{sec:partlem}
An essential part of our construction of Ramsey objects respecting a given closure description is a closure refinement of the partite lemma~\cite{Nevsetvril1989} which deals with the following objects.
\begin{definition}[$\str{A}$-partite system]
Let $L$ be a language and $\str{A}$ be an $L$-structure. Assume that $A = \{1, 2,\ldots, a\}$.  An \emph{$\str{A}$-partite $L$-system} is a tuple $(\str{A},{\mathcal X}_\str{B},\allowbreak \str{B})$ 
where $\str{B}$ is an $L$-struc\-ture and $\mathcal X_\str{B}=\{X^1_\str{B},X^2_\str{B},\ldots, X^a_\str{B}\}$  is a partition of the vertex set of $\str{B}$ into $a$ classes ($X^i_\str{B}$  are called \emph{parts} of $\str{B}$)  such that 
\begin{enumerate}
\item the mapping $\pi$ such that $\pi$ maps every $x \in X^i_\str{B}$ to $i$, $i = 1,2,\ldots,a$, is a
homo{\-}morphism-embedding $\str{B}\to\str{A}$ (we call $\pi$ the \emph{projection}); 
\item every tuple in every relation of $\str{B}$ contains at most one element of each class $X^i_\str{B}$ (these tuples are called \emph{transversal} with respect to the partition).
\end{enumerate}
\begin{remark}
Our definition differs from the definition used in~\cite{Nevsetvril1989}. We do not treat the linear order separately
and we also assume the existence of a homomorphism-embedding $\str{B}\to \str{A}$ (which simplifies the proof of the partite lemma). (This formulation of the partite system does not lead directly to a proof of the Ne\v set\v ril--R\"odl Theorem~\ref{thm:NR} itself. We aim for simplicity here.)
\end{remark}
\end{definition}
\emph{Isomorphisms} and \emph{embeddings} of $\str{A}$-partite systems, say of $\str{B}_1$ into $\str{B}_2$, are defined as isomorphisms and embeddings of structures together with the condition that all parts 
are being preserved (the part $X^i_{\str{B}_1}$ is mapped to $X^i_{\str{B}_2}$  for every $i = 1,2,\ldots,a$).

\begin{lemma}[Partite lemma with closures]\label{partlem}
Let $L$ be a relational language, $\mathcal U$ be a closure description in $L$,
$\str{A}$ be a finite $\mathcal U$-closed $L$-structure and $\str{B}$ be a finite $\mathcal U$-semi-closed $\str{A}$-partite $L$-system such that every vertex of $\str{B}$ is contained in a copy of $\str{A}$.
Then there exists a finite $\mathcal U$-semi-closed $\str{A}$-partite $L$-system $\str{C}$
such that 
$$
\str{C}\longrightarrow (\str{B})^\str{A}_2.
$$
(Here we consider $\str{A}$ to be an $\str{A}$-partite system and thus all copies of $\str{A}$ in $\str{B}$ preserve parts.)

Moreover there exists a family $\mathcal B$ of copies of $\str{B}$ in $\str{C}$ such that:
\begin{enumerate} 
\item \label{lem:partitni:family}For every $2$-colouring of all substructures of $\str{C}$ which are isomorphic to $\str{A}$ there exists $\widetilde{\str{B}}\in \mathcal B$ such that all substructures of $\widetilde{\str B}$ which are isomorphic to $\str{A}$ are monochromatic (thus $\mathcal B$ is a Ramsey system of copies of $\str{B}$ in $\str{C}$).
\item \label{lem:partitni:usubstructure}Every $\widetilde{\str{B}}\in \mathcal B$ is a $\mathcal U$-substructure of $\str{C}$.
\end{enumerate}
Finally, if $\str{B}$ is $\mathcal U$-closed then $\str{C}$ is $\mathcal U$-closed, too, and if every $\mathcal U$-irreducible substructure of $\str{B}$ is transversal, then every $\mathcal U$-irreducible substructure of $\str{C}$ is transversal, too. 
\end{lemma}

\begin{remark}
Our proof is inspired by the proof of the partite lemma in~\cite{Nevsetvril1989} which uses the Hales--Jewett theorem~\cite{Hales1963}. 
We give an easy description of $\str{C}$ as a product.
 This simplification follows from the assumption that $\str{B}$ is an $\str{A}$-partite-system and thus has a homomorphism-embedding projection to $\str{A}$.  This easier description of $\str{C}$ allows us to verify the additional properties of $\str{C}$ needed to carry our later proofs.
  The key observation of our earlier paper~\cite{Hubivcka2014} is that unary closures can be preserved by the partite construction.
  We show this also for non-unary closures (by a different technique which uses the nested partite construction instead of free amalgamation) in Section~\ref{sec:partiteconstruction}.
\end{remark}

For completeness, we briefly recall the Hales--Jewett Theorem~\cite{Hales1963}:
Consider a family of functions $f\colon \{1,2,\ldots, N\} \to \Sigma$ for some finite alphabet $\Sigma$. Let $(\omega,h)$ be a pair where $\emptyset\neq\omega\subseteq \{1,2,\ldots, N\}$ and $h$ is a function from $\{1,2,\ldots, N\}\setminus \omega$ to $\Sigma$. A~\emph{combinatorial line} $\mathcal L$ given by $(\omega,h)$ is then the family of all those functions $f\colon\{1,2,\ldots, N\} \to \Sigma$ that are constant on $\omega$ and $f(i)=h(i)$ otherwise. The Hales--Jewett theorem guarantees, for a sufficiently large $N$, that for every $2$-colouring of functions $f\colon\{1,2,\ldots, N\} \to \Sigma$ there exists a monochromatic combinatorial line. For brevity, we identify the description $(\omega,h)$ with the combinatorial line itself.

\begin{proof}[Proof of Lemma~\ref{partlem}]
Assume without loss of generality that $A = \{1, 2,\ldots, a\}$  and denote by $\mathcal X_\str{B} = \{X^1_\str{B},X^2_\str{B},\ldots, X^a_\str{B}\}$ the parts of $\str{B}$.

Let $N$ be a sufficiently large integer (that will be specified later) and construct an $\str{A}$-partite $L$-system  $\str{C}$
with parts $\mathcal X_\str{C} = \{X^1_\str{C},X^2_\str{C},\ldots, X^a_\str{C}\}$  as follows:
\begin{itemize}
\item For every $1\leq i\leq a$ let $X_\str{C}^i$ be the set of all functions $$f\colon \{1,2,\ldots,N\}\to X_\str{B}^i.$$
\item For every relation $\rel{}{}\in L$, put $$(f_1,f_2,\ldots, f_{\arity{}})\in \rel{C}{}$$ if and only if for every $1\leq i\leq N$ it holds that $$(f_1(i),f_2(i),\ldots, f_{\arity{}}(i))\in \rel{B}{}.$$
\end{itemize}
This completes the construction of $\str{C}$.

\medskip
We now check that $\str{C}$ indeed is  a $\mathcal U$-semi-closed $\str{A}$-partite $L$-system
with parts $\mathcal X_\str{C} = \{X^1_\str{C},\allowbreak X^2_\str{C},\allowbreak \ldots,\allowbreak  X^a_\str{C}\}$.
Most of this follows immediately from the definition,
we only verify that $\str{C}$ is $\mathcal U$-semi-closed.  For a contradiction, assume the existence of a pair $(\rel{}{U},\str{R})\in \mathcal U$, an embedding  $f\colon \str{R}\to \str{C}$, and an $|R|$-tuple $\vv{r}=(r_1,r_2,\ldots, r_{\vert R\vert })$ of vertices of $f(\str{R})$ such that the $\rel{C}{U}$-out-degree of $\vv r$ is more than one. Denote by $m$ the number of vertices of $\str{R}$ and by $n$ the arity of $\rel{}{U}$.  Because the $\rel{A}{U}$-out-degree of $\vv r$ is more than one, we have two $(n-m)$-tuples $(f_{m+1},f_{m+2},\ldots, f_{n}) \neq (f'_{m+1},f'_{m+2},\ldots, f'_{n})$ such that: 
$$(r_1,r_2,\ldots, r_m, f_{m+1},f_{m+2},\ldots, f_{n})\in \rel{C}{U},\hbox{ and}$$
$$(r_1,r_2,\ldots, r_m, f'_{m+1},f'_{m+2},\ldots, f'_{n})\in \rel{C}{U}.$$
By the construction of $\str{C}$ we thus know that for every $1\leq j\leq N$:
 $$(r_1(j),r_2(j),\ldots, r_m(j), f_{m+1}(j),f_{m+2}(j),\ldots, f_{n}(j))\in \rel{B}{U},\hbox{ and}$$
 $$(r_1(j),r_2(j),\ldots, r_m(j), f'_{m+1}(j),f'_{m+2}(j),\ldots, f'_{n}(j))\in \rel{B}{U}.$$
Since the $\rel{A}{U}$-out-degrees are at most one in $\str{B}$, we know that $f_k(j)=f'_k(j)$ for every $m < k\leq n$ and $1\leq j\leq N$, a contradiction.
The second part of the definition of $\mathcal U$-semi-closed structure is trivially satisfied by the existence of the projection.

By a similar argument it follows that if $\str{B}$ is $\mathcal U$-closed then $\str{C}$ is $\mathcal U$-closed, too.

\medskip
Next we describe the Ramsey family $\mathcal B$ of copies of $\str B$.
Let $\widetilde{\str{A}}_1, \widetilde{\str{A}}_2,\ldots, \widetilde{\str{A}}_t$ be an enumeration of all $\str{A}$-partite subsystems of $\str{B}$ which are isomorphic to $\str{A}$. 
Put  $\Sigma=\{1,2,\ldots, t\}$ which we consider as an alphabet.
Each combinatorial line $\mathcal L=(\omega, h)$ in $\Sigma^N$ corresponds to an embedding $e_\mathcal L\colon\str{B}\to \str{C}$ which assigns to every vertex $v\in X^p_\str{B}$ a function $e_\mathcal L(v)\colon\{1,2,\ldots, N\}\to X^p_\str{B}$ (\ie{}\ a vertex of $X^p_\str{C}$) such that:
$$e_\mathcal L(v)(i)=
 \begin{cases} 
    \hbox{$v$ for $i\in \omega$, and}\\
    \hbox{the unique vertex in $\widetilde A_{h(i)}\cap X^p_\str{B}$ otherwise.}
   \end{cases}
$$
It follows from the construction of $\str{C}$, from the fact that $\str{B}$ has a projection to $\str{A}$ and from the assumption that every vertex of $\str{B}$ belongs to a copy of $\str{A}$ that $e_\mathcal L$ is an embedding.

Let the family $\mathcal B$ consist of all copies $e_\mathcal L(\str{B})$ for some combinatorial line $\mathcal L$.
First we check that every copy in $\mathcal B$ is a $\mathcal U$-substructure of $\str{C}$ (condition~\ref{lem:partitni:usubstructure} from the statement of the lemma).
Assume, to the contrary,
that there is $\widetilde{\str{B}} \in \mathcal{B}$ which corresponds to a combinatorial line $\mathcal L = (\omega, h)$, A pair $(\rel{}{U}, \str{R}) \in \mathcal U$ and
$\vv{t}=(f_1,f_2,\ldots, f_{\arity{U}}) \in \rel{C}{U}$ such that $\{f_1, f_2, \ldots, f_{\vert R\vert}\}\subseteq \widetilde{B}$ and there is a vertex
in $\vv{t}$ which is not in $\widetilde{B}$. 
By the construction of $\str C$ it holds that $(f_1(i),f_2(i),\ldots, f_{\arity{U}}(i))\in \rel{B}{U}$ for every $1\leq i\leq N$. Since $\str B$ is $\mathcal U$-semi-closed, we get that the $\rel{B}{U}$-out-degree of $(f_1(i),\allowbreak f_2(i), \ldots,\allowbreak f_{\vert R\vert}(i))$ is exactly one for every $i$. However, it is easy to see that this implies that in fact $f_j\in \widetilde{B}$ for every $1\leq j\leq \arity{U}$, which is a contradiction.

Now we check condition~\ref{lem:partitni:family} (\ie{}\ that $\mathcal B$ is a Ramsey system of copies of $\str{B}$).
 Let $N$ be the Hales--Jewett number guaranteeing a monochromatic line in any $2$-colouring of $\Sigma^N$.
Now assume that $\mathcal{A}_1, \mathcal{A}_2$ is a $2$-colouring of all copies of $\str{A}$ in $\str{C}$.
Using the construction of $\str{C}$ we see that  among the copies of $\str{A}$ are copies induced by the $N$-tuple $(\widetilde{\str{A}}_{u(1)}, \widetilde{\str{A}}_{u(2)},\ldots,\widetilde{\str{A}}_{u(N)})$ for every function $u\colon\{1,2,\ldots, N\}\to \Sigma$. However, such copies are coded by the elements of the cube $\Sigma^N$ and thus there is a monochromatic 
combinatorial line $\mathcal L$. The monochromatic copy of $\str{B}$ is then $e_\mathcal L(\str B)$ which belongs to $\mathcal B$.

\medskip

Finally, we verify that if every $\mathcal U$-irreducible subsystem of $\str{B}$ is transversal
then also every $\mathcal U$-irreducible subsystem of $\str{C}$ is transversal. Assume the contrary
and denote by $\str{D}$ a non-transversal $\mathcal U$-irreducible subsystem of $\str{C}$.
Let $f_1,f_2,\ldots, f_n$ be an enumeration of the vertices of $\str{D}$ such that $f_1$ and $f_2$ are in the same part.  For every $i\in 1,2,\ldots, N$
denote by $\str{D}_i$ the  substructure of $\str{B}$ on vertices $f_1(i),f_2(i),\ldots, f_n(i)$.
Because $\str{D}_i$ is a homomorphic image of $\str{D}$ it follows that $\str{D}_i$ is not transversal.
Consequently, $f_1(i)=f_2(i)$.  Because this holds for every choice of $i$, we have $f_1=f_2$. A contradiction.
\end{proof}
\subsection{Partite construction with closures}
\label{sec:partiteconstruction}
The main result of this section is the following Lemma.
\begin{lemma}\label{lem:closures}
Let $L$ be a relational language,
let $\mathcal U$ be a closure description in $L$, let $\str{A}$ and $\str{B}$ be finite $\mathcal U$-closed
$L$-structures and let $\str{C}_0$ be a finite $L$-structure such that
$$
\str{C}_0 \longrightarrow (\str{B})^{\str{A}}_2.
$$
Then there exists a finite $\mathcal U$-closed $L$-structure $\str{C}$ with a $\mathcal U$-homomorphism-embed{\-}ding $\str{C}\to \str{C}_0$ such that:
$$
\str{C} \longrightarrow (\str{B})^{\str{A}}_2
$$
and every $\mathcal U$-irreducible substructure of $\str{C}$ is isomorphic to a substructure of $\str{B}$.
\end{lemma}

First, we prove a weaker variant of Lemma~\ref{lem:closures} (the weakening consists in an additional assumption on $\str{C}_0$):
\begin{lemma}\label{lem:preclosures}
Let $L$ be a relational language,
let $\mathcal U$ be a closure description in $L$, let $\str{A}$ and $\str{B}$ be finite $\mathcal U$-closed
$L$-structures and let $\str{C}_0$ be a finite $L$-structure such that
$$
\str{C}_0 \longrightarrow (\str{B})^{\str{A}}_2.
$$
Further assume that every copy of $\str{A}$ in $\str{C}_0$ is in fact a $\mathcal U$-substructure of $\str{C}_0$.
Then there exists a finite $\mathcal U$-closed $L$-structure $\str{C}$ with a $\mathcal U$-homomorphism-embedding $\str{C}\to \str{C}_0$ such that:
$$
\str{C} \longrightarrow (\str{B})^{\str{A}}_2
$$
and every $\mathcal U$-irreducible substructure of $\str{C}$ isomorphic to a substructure of $\str{B}$.
\end{lemma}

\begin{proof}[Proof (an adaptation of~\cite{Nevsetvril1989}).]
Without loss of generality we can assume that $C_0=\{1,2,\ldots, c\}$.
Enumerate all copies of $\str{A}$ in $\str{C}_0$  as  $\{\widetilde{\str{A}}_1, \widetilde{\str{A}}_2,\ldots, \widetilde{\str{A}}_b\}$.
We shall define $\str{C}_0$-partite $\mathcal U$-closed structures $\str{P}_0, \str{P}_1, \ldots, \str{P}_b$
such that:
\begin{enumerate}[label=(\roman*)]
 \item for every $0\leq k<b$ and every 2-colouring of copies of $\str{A}$ in $\str{P}_{k+1}$ there is a copy of $\str{P}_{k}$ in $\str{P}_{k+1}$ such that all copies of $\str{A}$ with a projection to $\widetilde{\str{A}}_{k+1}$ are monochromatic,
 \item\label{1:ii} for every $0\leq k\leq b$ the projection of $\str{P}_k$ to $\str{C}_0$ is a $\mathcal U$-homomorphism-embedding and every $\mathcal U$-irreducible substructure of $\str{P}_k$ is isomorphic to a substructure of $\str{B}$.
\end{enumerate}
We denote the parts of $\str P_k$ as $\mathcal X_{\str{P}_k} = \{X_k^1, X_k^2, \dots,X_k^c\}$. As is usual in the Partite construction, the systems $\str{P}_k$ are called \emph{pictures} and will be constructed by induction on $k$. 

\begin{figure}[t]
\centering
\includegraphics{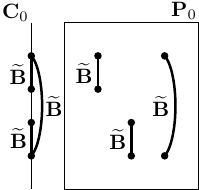}
\caption{The construction of picture $\str{P}_0$.}
\label{fig:picture0}
\end{figure}
\begin{enumerate}
\item
The picture $\str{P}_0$ is constructed as a disjoint union of  copies of $\str{B}$: For every copy $\widetilde{\str{B}}$ of
$\str{B}$ in $\str{C}_0$ we consider a new isomorphic and disjoint copy $\widetilde{\str{B}}'$ in $\str{P}_0$ which intersects the part $X_0^l$  if and only if $\widetilde{\str{B}}$ intersects such that the projection of $\widetilde{\str{B}}'$ is $\widetilde{\str{B}}$ (see Figure~\ref{fig:picture0}).
This is indeed $\mathcal U$-closed and satisfies~\ref{1:ii} as no tuples in any relations between copies are added.

\item

Let $\str{P}_k$ be already constructed.
Let $\str{B}_k$ be the substructure of  $\str{P}_k$ induced by $\str{P}_k$ on vertices which project to $\widetilde{\str{A}}_{k+1}$.
We can also assume that every vertex of $\str{B}_k$ belongs to a copy of $\str{A}$. (If this condition is not satisfied, it is possible to extend $\str{B}$ by free amalgamation with additional copies of $\str{A}$ over the $\mathcal U$-closure of every vertex which does not belong to a copy of $\str{A}$ already. The $\mathcal U$-closure of a vertex must be isomorphic to the $\mathcal U$-closure of its corresponding vertex of $\str{A}$ because the projection is a $\mathcal U$-homomorphism-embedding and $\mathcal U$-closures of vertices are $\mathcal U$-irreducible.)
By the assumption that  $\widetilde{\str{A}}_{k+1}$ is a $\mathcal U$-substructure of $\str{C}_0$ we also know that $\str{B}_k$ is $\mathcal U$-substructure of $\str{P}_k$.

Now we can use the partite Lemma~\ref{partlem} to obtain
a $\mathcal U$-closed $\widetilde{\str{A}}_{k+1}$-partite system $\str{D}_{k+1}$ and a Ramsey system $\mathcal B_{k+1}$ of copies of $\str{B}_k$ which are $\mathcal U$-substructures of $\str{D}_{k+1}$.
Now consider all copies in $\mathcal B_{k+1}$ and extend each of these structures to a copy of $\str{P}_k$ by a free amalgamation. These copies are disjoint outside $\str{D}_{k+1}$ and preserve the parts of all the copies.
The result of this multiple amalgamation is picture $\str{P}_{k+1}$. The construction is depicted in Figure~\ref{fig:picture}.
\begin{figure}
\centering
\includegraphics{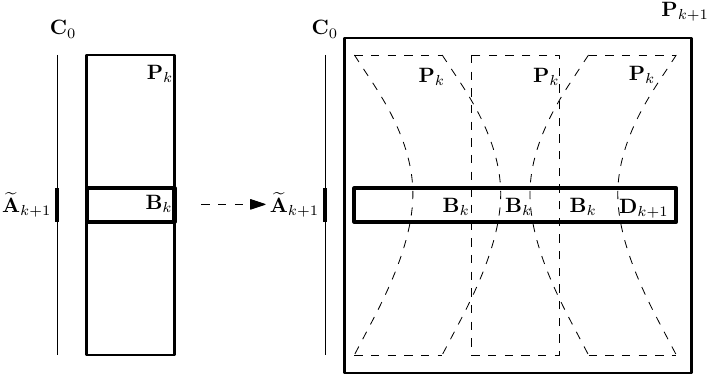}
\caption{The construction of picture $\str{P}_{k+1}$ from picture $\str{P}_k$.}
\label{fig:picture}
\end{figure}
By repeated application of Lemma~\ref{lem:uclosedamalgamation} we know that $\str{P}_{k+1}$ is $\mathcal U$-closed because it is a result of a sequence of free amalgamations of $\mathcal U$-closed structures over their $\mathcal U$-substructures.

It remains to be seen that~\ref{1:ii} holds.  This follows from the fact that
Lemma~\ref{partlem} preserves transversality of $\mathcal U$-irreducible substructures and $\str{D}_{k+1}$ has a $\mathcal U$-homomorphism-embedding to $\widetilde{\str{A}}_{k+1}$. Consequently, all $\mathcal U$-irreducible substructures of $\str{D}_{k+1}$ have an embedding to $\widetilde{\str{A}}_{k+1}$ (which we can assume to have an embedding to $\str{B}$ otherwise $\str{B}$ would be Ramsey for trivial reasons).
The subsequent free amalgamation does not introduce any new $\mathcal U$-irreducible and only copies those of $\str{P}_k$ which satisfy property~\ref{1:ii} by the induction hypothesis.
\end{enumerate}

Put $\str{C} = \str{P}_b$.  It  follows easily that  $\str{C}\longrightarrow
(\str{B})^{\str{A}}_2$. By a backward induction on $k$ one proves  that in every
$2$-colouring of ${\str{C}}\choose{\str{A}}$ there is a copy
$\widetilde{\str{P}}_0$ of $\str{P}_0$ such that the colour of a copy of
$\str{A}$ in $\str{P}_0$ depends only on its projection.  As this in turn induces
colouring of copies of $\str{A}$ in $\str{C}_0$, we obtain a monochromatic copy
of $\str{B}$ in $\widetilde{\str{P}}_0$.
\end{proof}

\begin{proof}[Proof of Lemma~\ref{lem:closures}]
We again apply the Partite construction as in the proof of Lemma~\ref{lem:preclosures}.
However, we repeatedly use Lemma~\ref{lem:preclosures} as a crucial step in the ``picture induction''.

Assume that $C_0=\{1,2,\ldots, c\}$.
Enumerate all copies of $\str{A}$ in $\str{C}_0$  as  $\{\widetilde{\str{A}}_1, \widetilde{\str{A}}_2,$ $\ldots, \widetilde{\str{A}}_b\}$.
We shall define $\str{C}_0$-partite $\mathcal U$-closed structures (pictures) $\str{P}_0, \str{P}_1, \ldots, \str{P}_b$ such that
\begin{enumerate}[label=(\roman*)]
\item for every $0\leq k<b$ and every 2-colouring of copies of $\str{A}$ in $\str{P}_{k+1}$ there is a copy of $\str{P}_{k}$ in $\str{P}_{k+1}$ such
that all copies of $\str{A}$ with projection to $\widetilde{\str{A}}_{k+1}$ are monochromatic,
\item\label{2:ii} for every $0\leq k\leq b$ the projection of $\str{P}_k$ to $\str{C}_0$ is a $\mathcal U$-homomorphism-embedding and every $\mathcal U$-irreducible substructure of $\str{P}_k$ is isomorphic to a substructure of $\str{B}$.
\end{enumerate}
Again we proceed by induction on $k$. 

\begin{enumerate}
\item
Picture $\str{P}_0$ is again constructed as a disjoint union of copies of $\str{B}$: For every copy $\widetilde{\str{B}}$ of
$\str{B}$ in $\str{C}_0$ we consider a new isomorphic and disjoint copy $\widetilde{\str{B}}'$ in $\str{P}_0$ which intersects the part $X_0^l$  if and only if $\widetilde{\str{B}}$ intersects it such that the projection of $\widetilde{\str{B}}'$ is $\widetilde{\str{B}}$. Clearly $\str{P}_0$ is $\mathcal U$-closed and satisfies~\ref{2:ii}.

\item

Let $\str{P}_k$ be already constructed.  
Let $\str{B}_{k+1}$ be the $\mathcal U$-semi-closed substructure of  $\str{P}_k$ induced by $\str{P}_k$ on the vertices of those copies of $\str{A}$ which project to $\widetilde{\str{A}}_{k+1}$.
Observe that in this setting $\str{B}_{k+1}$ is not necessarily $\mathcal U$-closed in $\str{O}_k$ for two reasons. First, $\widetilde{\str{A}}_{k+1}$ may not be an $\mathcal U$-substructure of $\str{C}_0$. Second, while constructing $\str{B}_{k+1}$ we did not include all vertices with a projection to $\widetilde{\str{A}}_{k+1}$.

In this situation, we use the partite Lemma~\ref{partlem} to obtain
a $\mathcal U$-semi-closed $\widetilde{\str{A}}_{k+1}$-partite system $\str{D}_{k+1}$ and a Ramsey system $\mathcal B_{k+1}$ of copies of $\str{B}_k$ which are all $\mathcal U$-substructures of $\str{D}_{k+1}$.
Now consider all copies in $\mathcal B_{k+1}$ and extend each of these structures to a copy of $\str{P}_k$, disjointly outside of $\str{D}_{k+1}$.
The result of this multiple amalgamation is denoted by $\str{O}_{k+1}$. ($\str{O}$ stands for Czech ``obr\'azek'' --- 
a little picture. At this moment  we further refine the partite construction: In the construction
of picture~$\str{P}_{k+1}$ from $\str{P}_k$ we sandwich $\str{O}_{k+1}$ which itself is a result of the partite construction.)
Note that $\str{O}_{k+1}$ is not necessarily $\mathcal{U}$-semi-closed, because $\str{B}_k$ is not necessarily a $\mathcal U$-substructure of $\str{P}_k$.

Denote by $\mathcal{A}_{k+1}$ the set of all copies of $\str{A}$ in $\str{O}_{k+1}$ with projection to $\widetilde{\str{A}}_{k+1}$. 
 We show that for every pair $(\rel{}{U} ,\str{R}) \in \mathcal U$ and every $\vert R\vert $-tuple $\vv{t}$ of vertices of $\str{O}_{k+1}$
such that the $\nbrel{\str{O}_{k+1}}{U}$-out-degree of $\vv{t}$ is more than one it holds that  $\vv{t}$ is never contained in a copy
of $\str{A}$ in $\mathcal A_{k+1}$. This follows from the fact that higher degrees can only be created when amalgamating (freely) $\str P_k$ on top of copies from $\mathcal B_{k+1}$, all copies of $\str{B}_{k+1}$ in $\mathcal B_{k+1}$ are $\mathcal U$-substructures of $\str{D}_{k+1}$,
$\str{D}_{k+1}$ is $\mathcal U$-semi-closed and $\str{P}_k$ is $\mathcal U$-closed.

To apply Lemma~\ref{lem:preclosures}, we turn the $\str{C}_0$-partite system $\str{O}_{k+1}$ to a relational structure $\str{O}^+_{k+1}$ in a lifted language $L^+$ which represents the parts using unary relations. Explicitly, we put $L^+=L\cup \{\rel{}{i}:i\in C_0\}$ and the arity of all new relations is one. The $L^+$-structure $\str{O}^+_{k+1}$ is constructed as follows:
\begin{enumerate}
\item $O^+_{k+1}=O_{k+1}$ (\ie{}\ $\str{O}^+_{k+1}$ has the same vertices as $\str{O}_{k+1}$),
\item for every relation $\rel{}{}\in L$ we put $\nbrel{\str{O}^+_{k+1}}{}=\nbrel{\str{O}_{k+1}}{}$ (\ie{}\ $\str{O}^+_{k+1}$ has the same original relations as $\str{O}_{k+1}$),
\item $v\in \nbrel{\str{O}^+_{k+1}}{i}$ if and only if $\pi(v)=i$.
\end{enumerate}
Analogously, we turn the $\str{C}_0$-partite $L$-system $\str{P}_{k}$ to an $L^+$-structure $\str{P}^+_k$ and $\widetilde{\str{A}}_{k+1}$ to $\str{A}^+$ (where $v\in \nbrel{\str{A}^+}{v}$ for every $v\in A^+$, remember that $\widetilde{\str{A}}_{k+1}\subseteq \str C_0$). Finally, construct a closure description $\mathcal U^+$ (in $L^+$) consisting of all pairs $(\rel{}{U},\str{S}^+)$ where $\rel{}{U}\in L$ and $\str{S}^+$ is an $L^+$-structure such that there exists $(\rel{}{U},\str{S})\in \mathcal U$ with $S=S^+$ and $\nbrel{\str{S}}{}=\nbrel{\str{S}^+}{}$ for every $\rel{}{}\in L$ (that is, $\mathcal U^+$ extends every root of $\mathcal U$ by unary relations in every possible way and thus represents the same closures).

We verify the conditions of Lemma~\ref{lem:preclosures} for these $L^+$-structures. Because the projection is explicitly represented by the unary relations in $L^+$, it follows that $$\str{O}^+_{k+1} \longrightarrow (\str{P}^+_k)^{\str{A}^+}_2.$$ (Here $\str{O}^+_{k+1}$, $\str{P}^+_k$ and $\str{A}^+$ are seen as structures, not partite systems.)
This holds because all copies of $\str{A}^+$ in $\str{O}^+_{k+1}$ correspond to copies of $\str{A}_{k+1}$ in $\mathcal A_{k+1}$ and the Ramsey property for those copies is given by Lemma~\ref{partlem}. We also verified that all such copies are $\mathcal U$-substructures of $\str{O}_{k+1}$ and consequently all copies of $\str{A}^+$ in $\str{O}^+_{k+1}$ are $\mathcal U^+$-substructures.

It follows, by an application of Lemma~\ref{lem:preclosures}, that
there exists a $\mathcal U^+$-closed $L^+$-structure $\str{P}^+_{k+1}$ such that $\str{P}^+_{k+1} \longrightarrow (\str{P}^+_k)^{\str{A}^+}_2$
with a homomorphism-embedding to $\str{O}^+_{k+1}$. 

The $\mathcal U$-closed $\str{C}_0$-partite $L$-system $\str{P}_{k+1}$ is then constructed by re-interpret\-ing $\str{P}^+_{k+1}$ back as a partite system: vertices of $\str{P}_{k+1}$ are the same as vertices of $\str{P}^+_{k+1}$ and all tuples in all relations in the language $L$ are also the same. The parts are determined by the unary relations $\rel{}{i}$ (for every $i\in C_0$ and $v\in P^+_{k+1}$ it holds that $v\in X^i_{k+1}$ if and only if $v\in \nbrel{\str{P}_{k+1}}{i}$).

It remains to verify~\ref{2:ii}. Because we extended the language by unary predicates naming the parts and because every $\mathcal U$-irreducible substructure of $\str{P}^+_{k+1}$ is isomorphic to a $\mathcal U$-irreducible substructure of $\str{P}^+_k$, we know that every $\mathcal U$-irreducible substructure of $\str{P}_{k+1}$ is transversal and isomorphic to a $\mathcal U$-irreducible substructure of $\str{P}_k$ and hence by the induction hypothesis to a substructure of $\str{B}$. This also verifies that the projection is a $\mathcal U$-homomorphism-embedding.
\end{enumerate}

Put $\str{C} = \str{P}_b$. 
Again, 
analogously to the proof of Lemma~\ref{lem:preclosures}, by a backward induction it follows that $\str{C}\longrightarrow (\str{B})^\str{A}_2$.
\end{proof}

\subsection{Iterated partite construction}
\label{sec:iterated}
To prove Theorems~\ref{thm:mainstrong} and~\ref{thm:mainstrongclosures}, we
need a generalisation of the iterated partite construction (in style of~\cite{Nevsetvril2007}).

\begin{lemma}[$j$ completion implies $j+1$ completion]
\label{lem:iteratedpartitestep2}
Let $L$ be a relational language, let $\mathcal U$ be a closure description in $L$, let $\mathcal K$ be a strong amalgamation class of finite irreducible $\mathcal U$-closed
$L$-structures which is hereditary for $\mathcal U$-closed substructures and let $j\geq 0$ be a non-negative integer.
Let $\str{A},\str{B}\in \mathcal K$
and let $\str{C}_0$ be a finite $\mathcal U$-closed $L$-structure such that $$\str{C}_0 \longrightarrow (\str{B})^{\str{A}}_2.$$
Further assume that either $j=0$ or $j>0$ and every $\mathcal U$-closed substructure of $\str{C}_0$ with $\mathcal U$-size at most $j$ has a $(\K,\mathcal U)$-completion. 
Then there exists a $\mathcal U$-closed $L$-structure $\str{C}$ satisfying the following conditions:
\begin{enumerate}[itemsep=0em]
\item There is a $\mathcal U$-homo{\-}morphism-embedding $\str{C}\to \str{C}_0$,
\item $\str{C} \longrightarrow (\str{B})^{\str{A}}_2$,
\item every $\mathcal U$-closed substructure of $\str{C}$ of $\mathcal U$-size at most $j+1$ has a $(\K,\mathcal U)$-completion, and
\item every $\mathcal U$-irreducible substructure of $\str{C}$ is isomorphic to a substructure of $\str{B}$. 
\end{enumerate}
\end{lemma}
\begin{proof}
For the fourth (and last) time we apply the partite construction. We proceed analogously to the proofs 
of Lemmas~\ref{lem:closures} and~\ref{lem:preclosures}.
Again, we enumerate all copies of $\str{A}$ in $\str{C}_0$  as  $\{\widetilde{\str{A}}_1, \widetilde{\str{A}}_2,\ldots, \widetilde{\str{A}}_b\}$.
We then define $\mathcal U$-closed $\str{C}_0$-partite systems $\str{P}_0, \str{P}_1, \ldots, \str{P}_b$ such that:
\begin{enumerate}[label=(\roman*)]
\item\label{3:i} every $\mathcal U$-closed substructure of $\str{P}_k$, $0\leq k\leq b$, of $\mathcal U$-size at most $j+1$ has a $(\K,\mathcal U)$-completion,
\item\label{3:ii} in every $2$-colouring of ${\str{P}_{k+1}}\choose{\str{A}}$, $0\leq k< b$, there exists a copy $\widetilde{\str{P}}_{k}$ such that all copies of $\str{A}$ with a projection to $\widetilde{\str{A}}_{k+1}$ are monochromatic,
\item\label{3:iii} for every $0\leq k\leq b$ the projection of $\str{P}_k$ to $\str{C}_0$ is a $\mathcal U$-homomorphism-embedding and every $\mathcal U$-irreducible substructure of $\str{P}_k$ is isomorphic to a substructure of $\str{B}$.
\end{enumerate}
As before, we get that putting $\str{C} = \str{P}_b$ we have the desired Ramsey property $\str{C}\longrightarrow (\str{B})^{\str{A}}_2$. It remains to prove \ref{3:i}, \ref{3:ii} and \ref{3:iii}.

Put explicitly $\mathcal X_{\str{P}_k} = \{X_k^1, X_k^2, \dots, X_k^c\}$.
We proceed by induction on $k$.

\begin{enumerate}
\item
The picture $\str{P}_0$ is constructed in the same way as in the proof of Lemma \ref{lem:preclosures} as a disjoint union of  copies of $\str{B}$: for every copy $\widetilde{\str{B}}$ of
$\str{B}$ in $\str{C}_0$ we consider a new isomorphic and disjoint copy $\widetilde{\str{B}}'$ in $\str{P}_0$ which intersects the part $X_0^l$  if and only if $\widetilde{\str{B}}$ intersects (so that the projection of $\widetilde{\str{B}}'$ is $\widetilde{\str{B}}$).
Clearly, $\str{P}_0$ has a $(\K,\mathcal U)$-completion (it can be constructed by a series of strong amalgamations over the empty set), which proves property~\ref{3:i}. Property~\ref{3:iii} also follows directly from the construction.

\item
Let $\str{P}_k$ be already constructed.  
 Let $\str{B}_k$ be the $\mathcal U$-substructure of  $\str{P}_k$ induced by $\str{P}_k$ on vertices which project to $\widetilde{\str{A}}_{k+1}$.
$\str{P}_{k+1}$ is constructed in the same way as in the proof of Lemma~\ref{lem:preclosures}:
We use the partite lemma~\ref{partlem} to obtain
a $\mathcal U$-closed $\widetilde{\str{A}}_{k+1}$-partite system $\str{D}_{k+1}$ and the Ramsey system $\mathcal B_{k+1}$.
Now consider all copies in $\mathcal {B}_{k+1}$ and extend each of these structures to a copy of $\str{P}_k$ (using free amalgamation). These copies are disjoint outside of $\str{D}_{k+1}$.  In this extension, we preserve the parts of all the copies.
The result of this series of amalgamations is $\str{P}_{k+1}$. Because $\str{D}_{k+1}\longrightarrow (\str{B}_k)^{\widetilde{\str{A}}_{k+1}}_2$, we know that $\str{P}_{k+1}$ satisfies \ref{3:ii}.

 Because $\str{P}_{k+1}$ is created by a series of free amalgamations of $\mathcal U$-closed structures over $\mathcal U$-substructures it follows that $\str{P}_k$ is $\mathcal U$-closed.
To see that $\str{P}_{k+1}$ satisfies \ref{3:iii} again observe that property \ref{3:iii} is preserved in the application of Lemma~\ref{partlem} as well as during the free amalgamation.

To finish the proof, we need to show~\ref{3:i} for $\str{P}_{k+1}$.
Assume the contrary and denote by $\str{F}$ a $\mathcal U$-substructure of $\str P_{k+1}$ of $\mathcal U$-size at most $j+1$ with no $(\K,\mathcal U)$-completion. Among all such counterexamples, pick $\str{F}$ to have the smallest $\mathcal U$-size (see Figure~\ref{fig:forbpicture2}). Because $\K$ has the strong amalgamation property (in particular over the empty set), we get that $\str{F}$ is connected.
\begin{figure}
\centering
\includegraphics{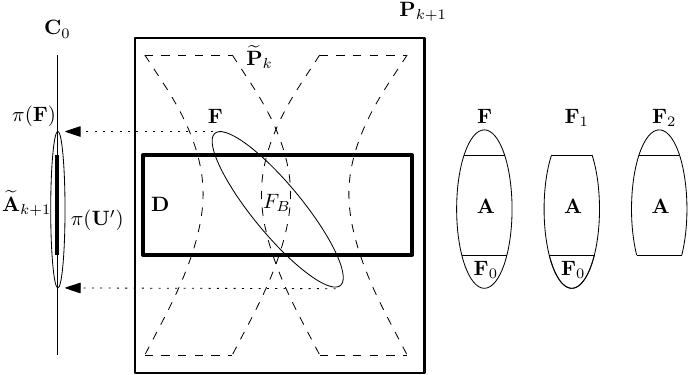}
\caption{The completion of $\str{F}$ by a strong amalgamation over a $\mathcal U$-closed irreducible substructure.}
\label{fig:forbpicture2}
\end{figure}

Consider the projection $\pi$ from $\str{F}$ to $\str{C}_0$ (which is an $\mathcal U$-homo{\-}morphism-embedding). Clearly the $\mathcal U$-size of $\str{F}$ is greater that or equal to the $\mathcal U$-size of $\pi(\str{F})$. First assume that the $\mathcal U$-size of $\pi(\str{F})$ is at most $j$. In this case, by the assumptions on $\str{C}_0$, there exists
 a structure $\str{F}'\in \K$ which is a  $\mathcal U$-completion of $\pi(\str{F})$.
 It follows that $\str{F}'\in \K$ is also a $\mathcal U$-completion of $\str{F}$.
 In the following we thus assume that both the $\mathcal U$-size of $\pi(\str{F})$ and the $\mathcal U$-size of $\str{F}$ are $j+1$.  

 Because $\str{D}_{k+1}$ is an $\widetilde{\str{A}}_{k+1}$-partite system and thus it has a projection to $\widetilde{\str{A}}_{k+1}$, we also know that $\str{A}\in \K$ is a $\mathcal U$-completion outside of $\str{D}_{k+1}$. In fact, by~\ref{3:i} it follows that $\str{F}$ contains vertices outside of $\str D_{k+1}$ which are from two different copies of $\str{P}_k$. Denote by $\widetilde{\str{P}}_k$ a copy
of $\str{P}_k$ which contains a vertex of $\str{F}$ with no projection to $\widetilde{\str{A}}_{k+1}$.
Denote by $F_B$ the set  $\widetilde{P}_k\cap F\cap D_{k+1}$. Because $\str{F}$ is connected, we know that the vertices of $F_B$ forms a vertex cut of $\str{F}$.
It is also easy to see that this vertex cut is $\mathcal U$-closed in $\str{F}$. (See schematic Figure~\ref{fig:forbpicture2}.)

 Denote by $\str{F}_0$ a connected component of $\str{F}$ after deleting the cut $F_B$. Denote by $\str{F}_1$ the structure induced by $\str{F}$ on vertices $F_0\cup F_B$ and $\str{F}_2$ the structure induced by $\str{F}$ on vertices $F\setminus F_0$. Both these structures are nonempty and $\mathcal U$-closed in $\str{F}$. Clearly $\str{F}$ is a free amalgamation of $\str{F}_1$ and $\str{F}_2$ over (the structure induced by $\str F$ on) $F_B$.
Denote by $\str{F}'_1$ the $\mathcal U$-closure of $\pi(\str{F}_1)$ in $\str{C}_{0}$ and by $\str{F}'_2$ the $\mathcal U$-closure of $\pi(\str{F}_2)$ in $\str{C}_{0}$. It follows that the $\mathcal U$-size of $\str{F}_1'$ and $\str{F}_2'$ is at most $j$. By the assumptions on $\mathcal U$-completions in $\str{C}_0$ it follows that there is $\str{F}''_1\in \K$ which is a $\mathcal U$-completion of $\str{F}'_1$ and $\str{F}''_2\in \K$ which is a $\mathcal U$-completion of $\str{F}_2'$.  
The strong amalgamation of $\str{F}''_1$ and $\str{F}''_2$ over the $\mathcal U$-closure of $\pi(F_B)$ in $\str{C}_0$ (which is a substructure of $\widetilde{\str{A}}_{k+1}$, and because $\widetilde{\str{A}}_{k+1}$ is irreducible, it must remain unchanged in both $\str{F}''_1$ and $\str{F}''_2$) is then a completion of $\str{F}$ in $\K$, which is a contradiction with $\str{F}$ having no $(\K,\mathcal U)$-completion. This finishes the proof of~\ref{3:i}.
\end{enumerate}
\end{proof}

\begin{lemma}
\label{lem:iteratedpartite2}
Let $L$ be a relational language, let $\mathcal U$ be a closure description in $L$, let $\mathcal K$ be a strong amalgamation class of finite irreducible $\mathcal U$-closed
$L$-structures which is hereditary for $\mathcal U$-closed substructures.
Let $\str{A},\str{B}\in \mathcal K$, let $n\geq 1$ be an integer,
and let $\str{C}_0$ be a finite $\mathcal U$-closed $L$-structure such that $$\str{C}_0 \longrightarrow (\str{B})^{\str{A}}_2.$$

Then there exists a finite $\mathcal U$-closed $L$-structure $\str{C}$ with a $\mathcal U$-homomorphism-embedding $\str{C}\to \str{C}_0$
such that
$$\str{C} \longrightarrow (\str{B})^{\str{A}}_2.$$
Moreover, every $\mathcal U$-substructure of $\str{C}$ with at most $n$ vertices has a $(\K,\mathcal U)$-completion
and every $\mathcal U$-irreducible substructure of $\str{C}$ is isomorphic to a substructure of $\str{B}$. 
\end{lemma}

\begin{proof}[Proof]
By a repeated application of Lemma~\ref{lem:iteratedpartitestep2}, starting from $\str{C}_0$,
we construct a sequence of $\mathcal U$-closed $L$-structures 
$\str C_0, \str{C}_1, \str{C}_2, \str{C}_3, \ldots, \str{C}_n$ such that:
\begin{enumerate}[label=(\roman*)]
\item $\str{C}_j \longrightarrow (\str{B})^{\str{A}}_2$ for every $0\leq j\leq n$,
\item there is a $\mathcal U$-homomorphism-embedding $\str{C}_j\to \str{C}_{j-1}$ for every $1\leq j\leq n$,
\item every $\mathcal U$-closed substructure of $\str{C}_j$ (for every $1\leq j\leq n$) of $\mathcal U$-size at most $j$ has a $(\K,\mathcal U)$-completion, and
\item every $\mathcal U$-irreducible substructure of $\str{C}_j$ (for every $1\leq j\leq n$) is contained in a copy of $\str{B}$.
\end{enumerate}

The statement of Lemma~\ref{lem:iteratedpartite2} then follows by putting $\str{C}=\str{C}_n$.
\end{proof}

\subsection{Proofs of Theorems~\ref{thm:mainstrong} and~\ref{thm:mainstrongclosures}}
\label{sec:mainres}

After all the preparations we are ready to complete the proofs of Theorems~\ref{thm:mainstrong} and~\ref{thm:mainstrongclosures}.
Combining the results of previous sections this takes the following easy form.
\begin{proof}[Proof of Theorem~\ref{thm:mainstrong}]
Given a language $L$, a class $\K$ of $L$-structures, $\str{A},\str{B}\in \K$, we use the Ramsey property of $\mathcal R$ (recall that $\mathcal K\subseteq \mathcal R$)
to obtain $\str{C}_0\in \mathcal R$ such that: $$\str{C}_0\longrightarrow (\str{B})^\str{A}_2.$$
Because $\K$ is locally finite in $\mathcal R$, we fix $n=n(\str{C}_0)$.

By an application of Propostion~\ref{prop:closures}, we obtain a relational language $L'$,
a closure description $\mathcal U$ and relational $L'$-structures $\str{A}'=\Rel(\str{A})$, $\str{B}'=\Rel(\str{B})$ and $\str{C}_0'=\Rel(\str{C}_0)$ and a class $\K'=\Rel(\K)$.
We have
$$\str{C}'_0\longrightarrow (\str{B}')^{\str{A}'}_2.$$

Next we apply Lemma~\ref{lem:iteratedpartite2} to obtain $\str{C}'$ such that
$$\str{C}'\longrightarrow (\str{B}')^{\str{A}'}_2$$
and which has a $(\K',\mathcal U)$-completion.

Now it remains to let $\str{C}$ to be a $\K$-completion of an $L$-structure corresponding to $\str{C}'$ in $\K$ (\ie{}\ structure $\Rel^{-1}(\str C'))$.
\end{proof}

\begin{proof}[Proof of Theorem~\ref{thm:mainstrongclosures}]
Thanks to Proposition~\ref{prop:closures}, we can assume that $L$ is a relational language with closure description $\mathcal U$.
Given $\str{A},\str{B}\in \K$ we use the Ramsey property of $\mathcal R$
to obtain $\str{C}_0\in \mathcal R$ such that:
$$\str{C}_0\longrightarrow (\str{B})^\str{A}_2.$$
Now fix $n=n(\str{B},\str{C}_0)$.
Next apply Lemma~\ref{lem:closures} to obtain a $\mathcal U$-closed $\str{C}_1$ such that
$$\str{C}_1\longrightarrow (\str{B})^\str{A}_2$$
and a $\mathcal U$-homomorphism-embedding $\str C\to\str{C}_0$.
Finally, apply Lemma~\ref{lem:iteratedpartite2} to obtain $\str{C}$ satisfying the following:
\begin{enumerate}[itemsep=0em]
\item $\str{C}\longrightarrow (\str{B})^\str{A}_2,$
\item there is a $\mathcal U$-homomorphism-embedding $\str C\to\str{C}_1$, and
\item every substructure of $\str{C}$ on at most $n$ vertices
has a $\K$-completion.
\end{enumerate}
We have verified the assumptions of the locally finite completion property (Definition~\ref{def:multiamalgamation}) for $\str{C}$. It follows that there is $\str{C}'\in \K$ which is a completion of $\str{C}$ with respect to copies of $\str{B}$. We thus obtained $\str{C}'\in \K$ such that:
$$\str{C}'\longrightarrow (\str{B})^\str{A}_2.$$
\end{proof}

\section{Construction of Ramsey lifts}
\label{sec:lifts}
In this section we focus on techniques for
lifting a class into a strong amalgamation class where we can apply Theorem~\ref{thm:mainstrong}.
We thus provide general tools for construction of Ramsey lifts.

\subsection{Ramsey classes and ultrahomogeneous structures}
\label{sec:fraisse}
Let $\K$ be a class of finite and/or countably infinite $L$-structures.  We say that a structure $\str{U}$ is \emph{embedding-universal} (or shortly \emph{universal}) for $\K$
if every structure in $\K$ embeds to $\str{U}$. We say that a \emph{class $\K$ contains an universal structure} if there exists structure $\str{U}\in \K$ which
is universal for $\K$.
One possible way of constructing universal objects is by iterated
amalgamations of finite objects, thereby obtaining the so-called \Fraisse{} limits. These are in fact \emph{$\K$-generic}: For a class $\K$, we say that an object $\str{H}$ is
$\K$-generic if it is both universal for $\K$ and it is \emph{ultrahomogeneous}, \ie{}, every isomorphism
$\varphi$ of two finite substructures $\str{A}$ and $\str{B}$ of
$\str{H}$ can be extended to an automorphism of $\str{H}$.  The notion of
ultrahomogeneous structure is one of the key notions of modern model theory and
it is the source of the well known classification programme of ultrahomogeneous structures~\cite{Lachlan1980,Cherlin1998,Cherlin2013}.

Recall that a structure $\str{A}$ is \emph{locally finite} if and only if the
$\str{A}$-closure of every finite subset of $A$ is finite. We focus on locally finite structures only. In
this context, (locally finite) ultrahomogeneous structures are characterised by the properties
of their finite substructures.  For a structure $\str{A}$, denote by
$\Age(\str{A})$ the class of all finite structures which embed to $\str{A}$.
For a class $\K$ of relational structures, we denote by $\Age(\K)$ the class
$\bigcup_{\str{A}\in \K} \Age(\str{A})$.

The following is one of the cornerstones of model theory.

\begin{theorem}[\Fraisse{}~\cite{Fraisse1986}, see \eg{}\ \cite{Hodges1993}]
\label{fraissethm}
Let $L$ be language and let $\K$ be a class of finite $L$-structures with only countably many non-isomorphic structures.

\begin{enumerate}[label=$(\alph*)$]
\item\label{fraisse:a} The class $\K$ is the age of a countable
locally finite ultrahomogeneous structure $\str{H}$ if and only if $\K$ is an amalgamation
class.
\item If the conditions of \ref{fraisse:a} are satisfied then the structure $\str{H}$ is
unique up to isomorphism. 
\end{enumerate}
\end{theorem}

Recall that a structure $\str{A}$ is \emph{$\omega$-categorical} if for every $n$ the
automorphism group of $\str{A}$ has only finitely many orbits on $n$-tuples. 
Ultrahomogeneous and $\omega$-categorical classes are
closely related to classes with Ramsey lifts as shown by the following 
proposition which exemplifies relevance of these model-theoretic
notions for Ramsey theory. 
\begin{prop}[\cite{Nevsetvril1989a}, see also \cite{Nevsetril2005,Kechris2005} for the general statement]
\label{prop:ramseyhomo}
Let $\mathcal K$ be a hereditary Ramsey class with the joint embedding property.
Then $\mathcal K$ is an amalgamation class.
\end{prop}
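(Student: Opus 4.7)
The plan is to argue by contradiction. Suppose $\mathcal{K}$ is a hereditary Ramsey class with joint embedding but amalgamation fails: fix $\str{A},\str{B}_1,\str{B}_2\in\mathcal{K}$ with embeddings $\alpha_i\colon\str{A}\to\str{B}_i$ ($i=1,2$) admitting no amalgam inside $\mathcal{K}$. By joint embedding choose $\str{D}\in\mathcal{K}$ together with embeddings $\gamma_i\colon\str{B}_i\to\str{D}$, so that $\str{D}$ contains the two distinguished copies $\gamma_i(\alpha_i(\str{A}))$ of $\str{A}$, each witnessed to extend to a copy of the corresponding $\str{B}_i$. The Ramsey property, applied to the pair $\str{A},\str{D}$, then yields $\str{C}\in\mathcal{K}$ with $\str{C}\longrightarrow(\str{D})^{\str{A}}_k$ for any desired finite $k$.

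The core step is the colouring. Colour each copy $\widetilde{\str{A}}\in\binom{\str{C}}{\str{A}}$ by colour $1$ if there is an embedding $\beta\colon\str{B}_1\to\str{C}$ whose image of $\alpha_1(\str{A})$ equals $\widetilde{\str{A}}$, and by colour $2$ otherwise. By Ramsey, some $\widetilde{\str{D}}\in\binom{\str{C}}{\str{D}}$ is monochromatic. Transport $\gamma_1,\gamma_2$ into $\widetilde{\str{D}}$: the transported copy of $\str{B}_1$ automatically certifies colour $1$ for the transported $\gamma_1(\alpha_1(\str{A}))$, so every copy of $\str{A}$ in $\widetilde{\str{D}}$ has colour $1$. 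In particular the transported $\gamma_2(\alpha_2(\str{A}))$ does, providing an embedding $\beta_1\colon\str{B}_1\to\str{C}$ whose $\alpha_1$-image coincides with the $\alpha_2$-image of the transported copy $\beta_2\colon\str{B}_2\to\str{C}$ of $\str{B}_2$. Together $\beta_1,\beta_2$ form an amalgam of $\str{B}_1,\str{B}_2$ over $\str{A}$ inside $\str{C}\in\mathcal{K}$, contradicting our hypothesis.

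The main technicality I expect to be the only real obstacle is that a copy $\widetilde{\str{A}}$ is just a substructure, not a labelled embedding, so the bare condition ``the image of $\alpha_1(\str{A})$ equals $\widetilde{\str{A}}$'' does not pin down the maps $\beta_1\circ\alpha_1$ and $\beta_2\circ\alpha_2$ on the nose when $\mathrm{Aut}(\str{A})$ is nontrivial. The fix is to take $k=2^{|\mathrm{Aut}(\str{A})|}$ and to refine the colouring: after fixing, for each copy $\widetilde{\str{A}}$, one reference isomorphism $\str{A}\cong\widetilde{\str{A}}$, record the subset of $\mathrm{Aut}(\str{A})$ consisting of those $\varphi$ for which the reference iso precomposed with $\varphi$ extends to an embedding of $\str{B}_1$ into $\str{C}$ via $\alpha_1$. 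Monochromaticity of $\widetilde{\str{D}}$ then supplies the specific automorphism linking the chosen $\str{B}_1$-extension with the transported $\str{B}_2$-embedding, so that the resulting amalgam satisfies $\beta_1\circ\alpha_1=\beta_2\circ\alpha_2$ as maps, as required. Once this book-keeping is in place, the contradiction above goes through unchanged and the proposition follows.
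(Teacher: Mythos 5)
Your first pass — colour a copy $\widetilde{\str{A}}$ by whether it is the $\alpha_1$-image of some embedding $\str{B}_1\to\str{C}$, transport $\gamma_1,\gamma_2$ into a monochromatic $\widetilde{\str{D}}$, and read off the amalgam — is the standard argument, and it works whenever structures in $\mathcal K$ are rigid. You correctly notice that without rigidity the colouring only matches images of $\str{A}$, not the actual maps, and you try to repair this by refining the colouring to subsets of $\mathrm{Aut}(\str{A})$.

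The repair does not close the gap. The set $S_{\widetilde{\str{A}}}\subseteq\mathrm{Aut}(\str{A})$ you record depends on the arbitrary reference isomorphism $\iota_{\widetilde{\str{A}}}\colon\str{A}\cong\widetilde{\str{A}}$ chosen for each copy, and there is no coherent way to make these choices across copies. After you find a monochromatic $\widetilde{\str{D}}\cong\str{D}$ via some isomorphism $\theta$, write $\theta\gamma_i\alpha_i=\iota_{\widetilde{\str{A}}_i}\circ\varphi_i$ with $\varphi_i\in\mathrm{Aut}(\str{A})$; the automorphisms $\varphi_1,\varphi_2$ are determined by the uncontrolled choices of $\iota_{\widetilde{\str{A}}_1},\iota_{\widetilde{\str{A}}_2}$ and $\theta$. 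Monochromaticity gives $S_{\widetilde{\str{A}}_1}=S_{\widetilde{\str{A}}_2}$, and $\varphi_1\in S_{\widetilde{\str{A}}_1}$ (witnessed by $\theta\gamma_1$), hence $\varphi_1\in S_{\widetilde{\str{A}}_2}$, producing a $\beta_1$ with $\beta_1\alpha_1=\iota_{\widetilde{\str{A}}_2}\varphi_1$. But the transported $\str{B}_2$-embedding $\beta_2:=\theta\gamma_2$ satisfies $\beta_2\alpha_2=\iota_{\widetilde{\str{A}}_2}\varphi_2$, and nothing forces $\varphi_1=\varphi_2$. So you obtain $\beta_1\alpha_1=\beta_2\alpha_2\varphi$ for an uncontrolled $\varphi\in\mathrm{Aut}(\str{A})$ rather than the required equality, and enlarging the number of colours does not help because the coherence defect lives in the choice of reference isomorphisms, not in the colour set. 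Being able to amalgamate $(\alpha_1,\alpha_2\varphi)$ for \emph{some} $\varphi$ is strictly weaker than amalgamating $(\alpha_1,\alpha_2)$.

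The intended resolution in this literature is to invoke rigidity: as the paper itself remarks, Ramsey classes fix a linear order on vertices, so every $\str{A}\in\mathcal K$ has trivial automorphism group, the reference isomorphisms and the automorphisms $\varphi_i$ become unique, and your unrefined first pass already yields $\beta_1\circ\alpha_1=\beta_2\circ\alpha_2$. In that setting the proposition follows exactly as you outline, and the $\mathrm{Aut}(\str{A})$-bookkeeping is unnecessary. In the non-rigid setting your proof as written is incomplete.
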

This (by now) easy observation, which was discovered in
order to characterise Ramsey classes of graphs, provided a link between the combinatorics of Ramsey
classes  and their model-theoretic properties. The link proved to
be vital and a decade later it led to the characterisation programme
of Ramsey classes~\cite{Nevsetril2005}  and to an important connection with
topological dynamics~\cite{Kechris2005}.

\subsection{Ramsey lifts and the Ramsey classification programme}
Ages of most ultrahomogeneous structures are not Ramsey for trivial reasons (most
frequently simply because they are not rigid enough) and one needs to add some extra information (such as a linear order) in order to make them Ramsey.

Let $L^+=L^+_\mathcal R\cup L^+_\mathcal F$ be a language containing language $L=L_\mathcal R\cup L_\mathcal F$. By this we mean $L_\mathcal R\subseteq L_\mathcal R^+$ and $L_\mathcal F\subseteq L^+_\mathcal F$ and the arities of the relations and functions which belong to both $L$ and $L^+$ are the same.
For every structure $\str{X}\in \Str(L^+)$ there is a unique structure $\str{A}\in \Str(L)$ satisfying $A=X$, $\rel{A}{}=\rel{X}{}$ for every $\rel{}{}\in L_\mathcal R$ and $\func{A}{}=\func{X}{}$ for every $\func{}{}\in L_\mathcal F$.
 We call $\str{X}$ a \emph{lift} of $\str{A}$ and $\str{A}$ is called the \emph{shadow} of $\str{X}$. With this notation, $\Str(L^+)$ is the class of all lifts of structures in $\Str(L)$, and conversely, $\Str(L)$ is the class of all shadows of structures from $\Str(L^+)$.
 Note that a lift is often called \emph{expansion} in the model-theoretic setting and a shadow is often called \emph{reduct}.
(Our terminology is motivated by the computer science context, see~\cite{Kun2008}, and for our purposes we find it both intuitive and natural.)
For a lift $\str X$ we denote by $\sh(\str{X})$  its shadow. ($\sh$
is a \emph{forgetful functor}.) Similarly, for a class $\K^+$
of lifted objects we denote by $\sh(\K^+)$ the class of all shadows
of structures in $\K^+$ (assuming the language $L^+$ of lifts is specified).
On the other hand for a class $\K$ of structures we often denote by $\K^+$ the class of lifted structures.

Given the large list of known ultrahomogeneous and $\omega$-categorical structures
(identified by the classification programme of ultrahomogeneous structures~\cite{Lachlan1980,Cherlin1998,Cherlin2013}) it is natural to ask if all those structures have Ramsey lifts.

The Ramsey classification programme~\cite{Nevsetril2005,Hubicka2005a} has been
completed for all ultrahomogeneous graphs~\cite{Nevsetvril1989a} and
digraphs~\cite{Jasinski2013}.  Motivated by this line of research, Cherlin also
recently extended the classification programme of ultrahomogeneous structures by the list of all
ordered graphs~\cite{Cherlin2013} which, in turn, also all lead to Ramsey
lifts.  This paper can be seen as a contribution to the Ramsey classification programme.

It is easy to see that every class $\K$ has a Ramsey lift. (For example, we may extend the
language by infinitely many unary relations and assign every vertex of every
structure in $\K$ a unique unary relation. Such a lift trivially prevents
any embeddings and the Ramsey statement becomes vacuously true.)
This is why we focus on Ramsey lifts using only finitely many additional relations
(where possible) or, more generally, on precompact lifts. This leads to the following definitions (introduced by Nguyen Van Th{\'e}, see~\cite{The2013}).
\begin{definition}
\label{defn:precompact}
Let $\mathcal K^+$ be a lift of $\K$.
We say that $\mathcal K^+$ is a \emph{precompact lift of $\mathcal K$} if for
every structure $\str{A} \in \mathcal K$ there are only finitely many
structures $\str{A}^+ \in \mathcal K^+$ such that $\str{A}^+$ is a lift of
$\str{A}$ (\ie{}\ $\sh(\str{A}^+)$ is isomorphic to $\str{A}$). 
\end{definition}
In the Ramsey setting the following is a natural property (called the \emph{expansion property} in~\cite{The2013}).
\begin{definition}
\label{defn:ordering}
Let $\mathcal K^+$ be a lift of $\K$. For $\str{A},\str{B}\in \K$ we say
that $\str{B}$ has the \emph{lift property} for $\str{A}$ if for every lift
$\str{B}^+\in \mathcal K^+$ of $\str{B}$ and for every lift $\str{A}^+\in \mathcal K^+$ of $\str{A}$ there is an embedding $\str A^+\to\str{B}^+$.

$\mathcal K^+$ has the \emph{lift property} with respect to $\K$ if for every $\str{A}\in \K$
there is $\str{B}\in \K$ with the lift property for $\str{A}$.
\end{definition}
In the special case where the lift adds only the order the lift property is
also called the \emph{ordering property} (which is one of the classical Ramsey theory definitions~\cite{Leeb,Nevsetvril1976}).

Lifts with the lift property are used to compute Ramsey degrees and universal
minimal flows \cite{Kechris2005}. Moreover, it can be shown that every class
has at most one precompact Ramsey lift up to bi-definability. Ramsey lifts with
the lift property can thus be considered to be the minimal lifts (see \eg{}\ \cite{The2013}).

In the Ramsey setting it is natural to work with classes that are not strong
amalgamation classes of ordered structures themselves but can be turned into
one by means of a precompact lift. 
A good candidate for a class with a precompact Ramsey lift is the age of an
$\omega$-categorical structure: every  $\omega$-categorical structure can
be turned to homogenous one by an appropriate precompact lift. 
This process is called the \emph{standard homogenisation}~\cite{Covington1990} and the corresponding lift is called the \emph{homogenising lift}.

More precisely, this is established as follows:
Given an age $\K$ of an $\omega$-categorical structure $\str{U}$, the homogenising
lift $\K^+$ can always be constructed by, for every $n\geq 1$, considering the
automorphism group of $\str{U}$ and adding lifted relations of arity $n$
denoting the individual orbits of $n$-tuples. The lift $\K^+$
is then the age of the ultrahomogeneous structure $\str{U}^+$ created this way.
However such a general description is rarely useful
for obtaining the Ramsey property.
We will focus on classes defined by forbidden homomorphism-embeddings because
these, when homogenised, turn into strong amalgamation classes which are at the heart
of our Ramsey argument.
First, we give an explicit homogenisation of these classes.
 This is done in a fully constructive way which
leads to an explicit description of Ramsey lifts (and therefore also to a practical way of
computing the Ramsey degrees and universal minimal flows). 

\subsection{Lifts of $\Forb(\F)$ with strong amalgamation}
Consider graphs (seen as relational structures in a language $L$ consisting of a single relation $E$) and the class $\K$ of all finite graphs
not containing $\str{C}_5$, the graph cycle with 5 vertices, and $\str{C}_3$, the graph cycle with 3 vertices, as (non-induced) subgraphs. Equivalently, this is the class of all finite graphs having no homomorphism-embedding from $\str{C}_5$. $\K$
is not an amalgamation class. However, it can be turned
to one by adding two binary relations~\cite{Komjath1988,Komjath1999,Cherlin1999}. This is done by considering language $L^+$ which extends $L$ by two binary relations $E_2$ and $E_3$.
For every graph $\str{A}\in \K$ one can construct its lift $\str{A}^+$ by putting $(u,v)\in E_2$ if and only if they are in distance 2 in $\str{A}$ and $(u,v)\in E_3$ if and only if they are in distance 3 or more. It can be easily checked that the class $\K^+$ of all substructures of such lifts forms an amalgamation class. (In fact, it is the class of all finite metric spaces with distances 1, 2 and 3 omitting triangles 1--1--1 and 1--2--2).
Similar results hold in general for classes given by forbidden homomorphism-embeddings.

Let $\F$ be a family of finite structures.  By $\Forb(\F)$ we denote the class
of all finite or countable structures $\str{A}$ such that there is no
homomorphism-embedding from any $\str{F}\in \F$ to $\str{A}$.  Analogously, by
$\Forbh(\F)$, $\Forbi(\F)$ and $\Forbm(\F)$ we shall denote the class of all finite or countable structures
$\str{A}$ such that there is no homomorphism, embedding and monomorphism from any $\str{F}\in \F$ to
$\str{A}$ respectively.

Generalising~\cite{Hubicka2013, Hubicka2009} we now give a way to turn every class $\Age(\Forb(\F))$
 into a lifted class $\Lifts_\F$ which has strong
amalgamation (and thus leads to a homogenisation of~$\Forb(\F)$ and in turn to a Ramsey class).
Here, $\Age(\Forb(\F))$ means the class consisting of all finite structures from $\Forb(\F)$.

\subsubsection{Pieces of structures}
Recall the notations of Gaifman graph, connected structure and cut introduced in Section~\ref{sec:perliminaries}.
In particular, cuts of an $L$-structure $\str{A}$ are vertex cuts of the Gaifman graph $\str{G}_\str{A}$ which are closed in $\str{A}$.
In this section we define a notion of piece which will be the basic building stone of our homogenizing lift.
First we review some standard graph-theoretic notions.

A~\emph{(connected) component} of $\str{A}$ with cut $R$ is any subset $C\subset A$ that is
a connected component of the graph created from $\str{G}_\str{A}$ by removing $R$.
Given a structure $\str{A}$ with cut $R$ and two subsets
$A_1$ and $A_2$ of $A$, we say that $R$ \emph{separates}
$A_1$ and $A_2$ if there are components $A'_1\neq
A'_2$ of $\str{A}$ with cut $R$ such that $A_1\subseteq A'_1$ 
and $A_2\subseteq A'_2$.
Given structure $\str{A}$ and a set $S\subseteq A$ of its vertices, the
\emph{neighbourhood of $S$ in $\str{A}$}, denoted by $N_\str{A}(S)$, is
the set of all vertices in $A\setminus S$ connected to a vertex $S$ by an edge in the
Gaifman graph of $\str{A}$.

We will make use of the following simple (geometrical) observation about the neighbourhoods and
components in structures.

\begin{observation}
\label{cuty}
Let $A_1$ be a component of a connected $L$-structure $\str{A}$ with a cut $R$. Then the neighbourhood $N_\str{A}(A_1)$ is a subset of $R$.

Moreover, $\Cl_\str{A}(N_\str{A}(A_1))$ is a cut contained in $R$, $A_1$ is one of the components of $\str{A}$ with cut $N_\str{A}(A_1)$
and $\str{A}$ induces a substructure on $\Cl_\str{A}(N_\str{A}(A_1))\cup A_1$.
\end{observation}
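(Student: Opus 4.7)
The plan is to work entirely with the Gaifman graph $G_\str{A}$, writing $R' = N_\str{A}(A_1)$ for brevity. The first assertion $R' \subseteq R$ is almost immediate from the definition of a component: any $v \in A \setminus A_1$ adjacent in $G_\str{A}$ to some vertex of $A_1$ must lie in $R$, for otherwise the edge between $v$ and $A_1$ would survive the deletion of $R$, putting $v$ in the same component of $G_\str{A} \setminus R$ as $A_1$, contradicting $v \notin A_1$.

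For the second assertion, I would check two things: (i) that $A_1$ is still the vertex set of a connected component after deleting only $R'$, and (ii) that $R'$ really is a cut, i.e. $G_\str{A} \setminus R'$ is disconnected. For (i), observe that no edge of $G_\str{A}$ can join $A_1$ to $A \setminus (A_1 \cup R')$: such an edge would exhibit a vertex of $A \setminus A_1$ neighbouring $A_1$ and not lying in $R' = N_\str{A}(A_1)$, contradicting the definition of neighbourhood. Hence $A_1$ is a union of components of $G_\str{A} \setminus R'$. Since $A_1$ is disjoint from $R$ and a fortiori from $R' \subseteq R$, the induced subgraph on $A_1$ is the same as in $G_\str{A} \setminus R$, where $A_1$ was connected by assumption; so $A_1$ is a single component.

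For (ii), use that $R$ was a cut: there exists a component of $G_\str{A} \setminus R$ distinct from $\str{A}_1$, and in particular $A \setminus (A_1 \cup R)$ is nonempty. Since $R' \subseteq R$, this set is contained in $A \setminus (A_1 \cup R')$, which is therefore also nonempty and disjoint from $A_1$ after removing $R'$. Combined with (i), $G_\str{A} \setminus R'$ has at least two components, so $R'$ is a cut and $\str{A}_1$ is one of the components witnessing this cut.

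There is no genuine obstacle here; the only point requiring care is to keep straight the inclusion $R' \subseteq R$ and its two consequences used in opposite directions: it ensures that deleting $R'$ does not disturb the connectivity of $A_1$ (nothing inside $A_1 \cup (A \setminus (A_1 \cup R))$ is removed), while also guaranteeing that the ``other side'' of the cut $R$ persists as a witness that $R'$ disconnects $\str{A}$.
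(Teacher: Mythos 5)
Your argument is correct and is exactly the elementary argument the paper compresses into the single word "Obvious." Both parts are handled cleanly: $R' \subseteq R$ by the component property, then $A_1$ is a component of $G_\str{A}\setminus R'$ because no edges of $G_\str{A}$ leave $A_1$ except into $R'$, and $R'$ is a cut because the other side $A\setminus(A_1\cup R)$ of the original cut is nonempty and survives after deleting only $R'\subseteq R$.
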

\begin{proof}Most of this is obvious, it is enough to remember that cuts are always substructures and thus it always holds $\Cl_\str{A}(N_\str{A}(A_1))\subseteq R$.\end{proof}

This correspondence between neighbourhoods and cuts lets us define the following notion.

\begin{definition}
\label{def:separating}
Let $R$ be a cut in a structure $\str{A}$. Let $A_1\neq A_2$
be two components of $\str{A}$ with cut $R$.  We call $R$ a \emph{minimal separating cut} for $A_1$ and $A_2$ in $\str{A}$ if
$R=\Cl_\str{A}(N_\str{A}(A_1))=\Cl_\str{A}(N_\str{A}(A_2))$.
\end{definition}
For brevity, we can omit one or both components when speaking about
a minimal separating cut:
We also call a cut $R$ minimal separating for $A_1$
in $\str{A}$ if there exists another component $B$ such that $R$
is minimal separating for $A_1$ and $B$ in $\str{A}$.  A cut
$R$ is minimal separating in $\str{A}$ if there exist components
$B_1$ and $B_2$ such that $R$ is minimal separating for
$B_1$ and $B_2$ in $\str{A}$.

\begin{example}
Observe that every inclusion minimal cut is also minimal separating, but not
vice versa.  An example of a minimal separating cut that is not inclusion minimal
is given in Figure~\ref{fig:minimalseparating}.
\begin{figure}
\centering
\includegraphics{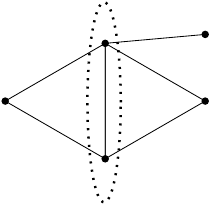}
\caption{A minimal separating cut that is not an inclusion minimal vertex cut.}
\label{fig:minimalseparating}
\end{figure}

\end{example}

The name ``minimal separating cut'' is justified by the following proposition.
\begin{prop}
\label{prop:sep}
Let $R$ be a cut in an $L$-structure $\str{A}$ with components $A_1\neq A_2$.
Then $R$ is a minimal separating cut for $A_1$ and $A_2$ if and only if it is an inclusion minimal cut of $\str{A}$ separating $A_1$ and $A_2$.
\end{prop}
\begin{proof}
Let $R$ be an inclusion minimal cut separating $A_1$ and $A_2$.
Clearly cuts $R_1=\Cl_\str{A}(N_\str{A}(A_1))\subseteq R$ and $R_2=\Cl_\str{A}(N_\str{A}(A_2))\subseteq R$
are both separating $A_1$ and $A_2$. From minimality of $R$ we have $R_1=R_2=R$.

To see the opposite direction, assume that $R$ is a minimal separating cut for $A_1$ and $A_2$, thus
$R=\Cl_\str{A}(N_\str{A}(A_1))=\Cl_\str{A}(N_\str{A}(A_2))$.  Assume, to the contrary, that  there
is a cut $R'\subset R$ which also separates $A_1$ and $A_2$. Denote by $A'_1$ the
component of $\str{A}$ with cut $R'$ containing $A_1$ and by $A'_2$
the component of $\str{A}$ with cut $R'$ containing $A_2$.  If $A'_1=A_1$
and $A'_2=A_2$ then $R'\supseteq
\Cl_\str{A}(N_\str{A}(A_1))=\Cl_\str{A}(N_\str{A}(A_2))=R$ which is a
contradiction.  By symmetry, we can thus assume that there is a vertex $v\in A'_1\setminus A_1$.
Because $v$ needs to be connected to $A_1$ we also know that there is a vertex $v'\in A'_1\cap R$.
 Because $v'\in \Cl_\str{A}(N_\str{A}(A_2))$ and $v'\notin A'_2$ we have that
$v'\in N_\str{A}(A'_2)$, a contradiction with $R'$ being cut separating $A'_1$ and $A'_2$.
\end{proof}

If $R$ is a set of vertices then $\ordclass{R}$ will denote a tuple (of
length $\vert R\vert $) formed by all the elements of $R$. Alternatively, $\ordclass{R}$
is an arbitrary linear ordering of $R$.
A~\emph{rooted $L$-structure} $\Piece$ is a pair $(\str{P},\ordclass{R})$
where $\str{P}$ is an $L$-structure and $\ordclass{R}$ is a
tuple consisting of distinct vertices of $\str{P}$. $\ordclass{R}$ is
called the \emph{root} of $\Piece$.

The following is our basic notion (generalizing \cite{Hubicka2013,Hubicka2009}).

\begin{definition}
\label{defn:piece}
Let $\str{A}$ be a connected $L$-structure and $R$ a minimal separating
cut for a component $A_1$ in $\str{A}$. A~\emph{piece} of an $L$-structure
$\str A$ is then a rooted $L$-structure $\Piece=(\str{P},\ordclass{R})$, where the
tuple $\ordclass{R}$ consists of the vertices of the cut $R$ in a (fixed)
linear order and $\str {P}$ is a structure induced by $\str{A}$ on $A_1\cup R$.
$\vert R\vert $ is called the \emph{width} of $\Piece$.
\end{definition}

Note that every piece is a connected structure.

All pieces are considered as rooted structures: a piece $\Piece$ is a structure $\str{P}$ rooted at $\ordclass{R}$. Accordingly, we say
 that pieces $\Piece_1=(\str{P}_1,\vv*{R}{1})$ and
$\Piece_2=(\str{P}_2,\vv*{R}{2})$ are \emph{isomorphic} if there is a function $\varphi\colon P_1\to P_2$ that is isomorphism of structures $\str{P}_1$ and $\str{P}_2$ and $\varphi$ restricted to $\vv*{R}{1}$ is a monotone bijection between $\vv*{R}{1}$ and $\vv*{R}{2}$ (we denote this as
$\varphi(\vv*{R}{1})=\vv*{R}{2}$).

\begin{figure}
\centering
\includegraphics{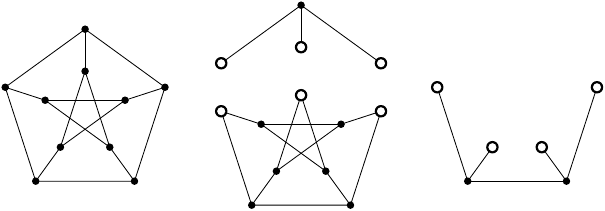}
\caption{Pieces of the Petersen graph up to isomorphisms (and up to a permutations of roots) with white vertices denoting the roots.
Observe that the complement of the last piece is an amalgamation of two identical pieces as depicted in Figure~\ref{Petersoni2}.}
\label{Petersoni}
\end{figure}
\begin{figure}
\centering
\includegraphics{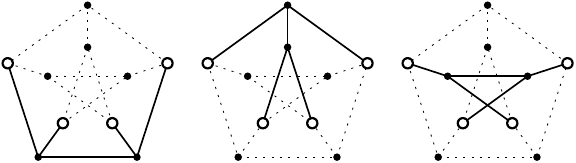}
\caption{Decomposition of the Petersen graph into 3 pieces using one cut with 4 vertices. Observe that all 3 pieces are isomorphic except for the ordering of the roots.}
\label{Petersoni2}
\end{figure}
\begin{example}
Figure~\ref{Petersoni} shows all isomorphism types of pieces of the Petersen graph (up to a permutation of roots).
\end{example}

\subsubsection{Regular families of structures}
\label{sec:regular}
Let $\F$ be a set of connected finite $L$-structures. For our construction of a universal structure for $\Forb(\F)$ we use special
lifts, called $\F$-lifts (introduced in~\cite{Hubicka2013,Hubicka2009} which also contain several examples).

Given rooted $L$-structures $(\str{P},\vv{R})$ and
$(\str{P}',\vv{R}')$ such that $\vert R\vert =\vert R'\vert $, denote by
$(\str{P},\vv{R})\oplus (\str{P}',\vv{R}')$ the
(possibly rooted) $L$-structure created as the free amalgamation of $\str{P}$ and $\str{P}'$
with the corresponding roots being identified (in the order of $\vv{R}$ and $\vv{R}'$).
Note that $(\str{P},\vv{R})\oplus (\str{P}',\vv{R}')$ is defined only
if the rooted structure induced by $\str{P}$ on $\vv{R}$ is isomorphic to the rooted
structure induced by $\str{P}'$ on $\vv{R}'$.

\begin{definition}
\label{def:incompatible}
A piece $\Piece=(\str{P},\vv{R})$ is \emph{incompatible} with
a rooted structure $\APiece$ if $\Piece \oplus \APiece$ is defined and there exists $\str{F}\in \F$
that is isomorphic to $\Piece \oplus \APiece$.  
(In  other words, there exists $\str{F}'$ isomorphic to some $\str{F}''\in \F$, such that $\Piece$ is a piece of $\str{F}'$ and $\APiece$ is a structure induced by $\str{F'}$ on $F'\setminus (P\setminus R)$
 rooted by $\vv{R}$.) 
\end{definition}
Thus $\Piece$ is incompatible with $\APiece$ if no amalgamation over their roots belongs to $\Forb(\F)$.
\begin{example}
As depicted in Figures~\ref{Petersoni} and \ref{Petersoni2}, for $\mathcal F$ being a Petersen graph there is up to isomorphism and permutation of roots a unique
piece $\Piece$ with 4 roots.  It is compatible with every other piece but incompatible with the structure $\APiece$ which is isomorphic to
the amalgamation of pieces $\Piece'$ and $\Piece''$ which are isomorphic to $\Piece$ except for the order of root vertices.
For this reason it is necessary to consider rooted structures rather than pieces in Definition~\ref{def:incompatible}.
\end{example}

Assign to each piece $\Piece$ a set $\Incompatible_\Piece$ of all
rooted structures that are incompatible with $\Piece$.  For two pieces
$\Piece_1$ and $\Piece_2$, we put $\Piece_1 \mathop{\sim_\F} \Piece_2$ if and only if
$\Incompatible_{\Piece_1}=\Incompatible_{\Piece_2}$. ($\mathop{\sim_\F}$ is called the \emph{piece equivalence}.)
Observe that every equivalence class of $\sim_\F$ contains pieces of the same
width $n$. We also call $n$ the \emph{width} of the equivalence class of $\sim_\F$.

\begin{definition}
\label{def:regular}
A family of finite structures $\F$ is  \emph{regular} if for every $n\geq 1$ the equivalence $\sim_\F$ has only
finitely many equivalence classes of width $n$.
  \end{definition}
\begin{remark}
The notion of regular
family of structures is a generalisation of the notion of a regular family of trees, introduced in
\cite{Erdos2012} and it is motivated by the similarity
to the characterisation of regular languages by the Myhill--Nerode Theorem.
Definition~\ref{def:regular} is a strengthening of the definition used in~\cite{Hubicka2013}
for classes without a bound on the size of the cut.
\end{remark}

\subsubsection{Maximal $\F$-lifts}
\label{sec:Flifts}
Now we are ready to explain the homogenising lift of the class $\Forb(\F)$. We
denote the language of the structures by $L$.

Fix an enumeration $\PieceEq_\F^1, \PieceEq_\F^2,\ldots$ of the equivalence
classes of all pieces with respect to $\sim_\F$ (the piece equivalence corresponding to $\F$).  If there are only finitely many equivalence
classes in $\sim_\F$, put $I=\{1,2,\ldots, N\}$, where $N$ denotes the number
of equivalence classes of $\sim_\F$. Otherwise put $I=\{1,2,\ldots\}$.  

The language $L^+$ extends $L$ by new relations $\ext{}{i}$, $i\in I$. The arity of $\ext{}{i}$ corresponds
to the width of $\PieceEq_\F^i$. (To make the distinction between languages more explicit, we use $\ext{}{i}$ to denote the lifted relations instead of $\rel{}{i}$.)
An \emph{$\F$-lift} $\str{X}$ of an $L$-structure $\str{A}$ is an $L^+$-structure $\str{X}$ such that $X=A$, $\rel{X}{}=\rel{A}{}$ for every $\rel{}{}\in L$, $\func{X}{}=\func{A}{}$ for every $\func{}{}\in L$ and with additional relations $\ext{X}{i}$, $i\in I$. Abusing notation, we will also write it briefly as:
$$\str{X}=(\str{A},(\ext{X}{i};i\in I)).$$
For an $L$-structure $\str A$, we define the \emph{canonical $\F$-lift}
of $\str A$ as follows:
$$L_\F({\str A})=(\str{A},(\extl{L_\F(\str{A})}{i}; i\in I))$$
by putting $(v_1,v_2,\ldots,v_l)\in
\extl{L_\F(\str{A})}{i}$ if and only if $v_i\neq v_j$ for every $1\leq i<j\leq l$ and there is a piece
$\Piece=(\str{P},\vv{R})\in \PieceEq_\F^i$ of width $\l$ and a homomorphism-embedding $f\colon \str{P}\to\str{A}$ such that
$f(\vv{R})=(v_1,v_2,\ldots,v_l)$ (thus, in particular, $f$ is injective on the vertices of $\vv{R}$).
We also say that the canonical $\F$-lift is induced on $\str{A}$ by $L_\F(\str{A})$.

We will use the following notion:
\begin{definition}
\label{defn:maximal}
The canonical $\F$-lift $L_\F({\str A})$ of $\str A$ is \emph{maximal} on $B\subseteq A$ if for every  $\str{C}\in \Forb(\F)$
such that $\str{C}$ contains $\str{A}$ as substructure, the $\F$-lift induced on $B$ by $L_\F(\str{A})$ is
the same as the $\F$-lift induced on $B$ by $L_\F(\str{C})$.
We say that an $\F$-lift $\str{X}$ is \emph{maximal} if there exists $\str{A}\in
\Forb(\F)$ such that $\str{X}$ is induced on $X$ by $L_\F(\str{A})$ and the
canonical $\F$-lift $L_\F(\str{A})$ of $\str A$ is maximal on $X$.
\end{definition}
Intuitively, a maximal $\F$-lift contains all possible relations from all extensions. Because the extensions are not
always compatible with each other, a maximal $\F$-lift is not necessarily unique. 
\begin{example}
\label{example:5cycle}
Consider $\mathcal F_5=\{\str{C}_5\}$. The graph $5$-cycle $\str{C}_5$ has up
to isomorphism two pieces: a path of length two and a path of length three both
rooted in the endpoints.  Each path forms its own equivalence class.  The
language $L^+$ will thus extend the language of graphs by two binary relations which
we denote by $\ext{}{2}$ and $\ext{}{3}$.  The canonical lift $L_{\mathcal F_5}(\str{B})$ of a graph $\str{B}$ adds a pair of vertices $(x,y)$ to $\ext{}{2}$
if and only if $x$ and $y$ are in distance two and to $\ext{}{3}$ if and only if
$x$ and $y$ are in distance three. 

Maximal $\F_5$-lift of $\str{B}$ can be constructed by extending $\str{B}$ to
$\str{A}$ by adding a new path of length three connecting every pair of vertices
which is not in distance one, two or three. It can be easily checked that $\str{A}\in
\Forb(\F)$ and that $L(\str{A})$ induces a maximal lift $\str{X}$ of $B$: for
every canonical lift of a structure in $\Forb(\F_5)$ it holds that every pair of
vertices is in at most one of relations $E$, $\ext{}{2}$ or $\ext{}{3}$ and in
$\str{X}$ this holds for every pair. Note also that ${L_{\F_5}}(\str{A})$ may not be maximal
for $\str{A}$ because some pairs of newly introduced vertices may be in distance greater than 3.

To see that maximal lifts are not necessarily unique consider the graph consisting of two vertices and no edges.
The procedure described above will connect them by relation $\ext{}{3}$, however it is also possible
to connect them by relation $\ext{}{2}$.
\end{example}
\begin{example}
Consider $\mathcal F_\infty$ to be the family of all odd graph cycles.  Here
the pieces are all paths of length 2 or more rooted in the endpoints.  There
are however only two equivalence classes. One containing all paths of even
length and the other containing all paths of odd length.  The class
$\Forb(\F_\infty)$ is the class of all bipartite graphs and the $\F_\infty$-lift
adds a binary relation $\ext{}{e}$ for even distances and a binary relation $\ext{}{o}$ for
odd distances. In a maximal $\F_\infty$-lift, every pair of vertices will either
be connected by an edge or be in one of $\ext{}{e}$ or $\ext{}{o}$.
\end{example}
Maximal $\F$-lifts form the homogenisation we are looking for. Before stating the main result of this section we recall several notions.

Recall that a structure $\str{A}\in\K$ is \emph{existentially complete in a class $\K$} if
for every structure $\str{B} \in \K$ such that the identity mapping (of $A$)
is an embedding $\str{A}\to\str{B}$, every existential statement $\psi$
which is defined in $\str{A}$ and true in $\str{B}$ is also true in
$\str{A}$.

We say that a homomorphism-embedding $f$ from an $L$-structure $\str{A}$ to an $L$-structure $\str{B}$ is \emph{surjective} if $f(A)=\{f(x); x \in A\}=B$. Homomorphism-embedding $f$ is \emph{tuple-surjective}
if for every $\rel{}{}\in L$ and every $\vv{u}\in \rel{B}{}$ there exists $\vv{v}=(v_1,v_2,\ldots, v_n)\in \rel{A}{}$ such
that $f(\vv{v})=(f(v_1),f(v_2),\ldots, f(v_n))=\vv{u}$ and for every $\func{}{}\in L$ and every $\vv{u}\in \dom(\func{B}{})$ there exists $\vv{v}=(v_1,v_2,\ldots, v_n)\in \dom(\func{A}{})$ such that $f(\vv{v})=(f(v_1),f(v_2),\ldots, f(v_n))=\vv{u}$.

We say that a class $\F$ is \emph{closed for homomorphism-embedding images} if
for every $\str{F}\in \F$, and every tuple-surjective homomorphism-embedding
$f\colon \str{F}\to \str{F}'$, there exists a substructure $\str{F}''$ of $\str{F}'$ such that  $\str{F}''\in \F$.
We shall prove the following result about the existence of homogenisations of classes $\Forb(\F)$:

\begin{theorem}
\label{mainthm}
Let $\F$ be a family of finite connected $L$-structures which is
closed for homomorphism-embedding images.  Denote by $\Lifts_\mathcal F$ the
class of all finite maximal $\F$-lifts.  Then $\Age(\Lifts_\mathcal F)$ is an amalgamation class
with strong amalgamations whose shadows are free amalgamations.
If $\F$ is a regular family, then the $\F$-lift adds only finitely many new relations
of every arity and therefore is precompact.

If $\Lifts_\mathcal F$ is countable, denote by $\str{U}'$ the \Fraisse{} limit of $\Age(\Lifts_\mathcal F)$.
If $\F$ is regular and $L$ is a finite relational language, then the shadow
$\str{U}=\sh(\str{U}')\in\Forb(\F)$ is the $\omega$-categorical existentially complete
structure universal for $\Forb(\F)$.
\end{theorem}

We can also show that the construction is tight. (This may be of independent model-theoretic interest.)
Family $\F$ is \emph{upwards closed} if for every $\str{F}\in \F$ we also have $\str{F}'\in \F$
provided that $\str{F}'$ is connected and there is a homomorphism-embedding $\str{F}\to \str{F}'$.
\begin{theorem}
\label{mainthm2}
Let $L$ be a finite relational language.
Let $\F$ be a upwards closed family of finite connected $L$-structures.
Then the following conditions are equivalent:
\begin{enumerate}[label=$(\alph*)$]
 \item\label{mainthm2:a} $\F$ is a regular family of connected structures.
 \item\label{mainthm2:b} There is a lift $L^+$ which extends $L$ only by finitely many relations of any given arity, no functions, and an ultrahomogeneous $L^+$-structure $\str{U}^+$ such that the shadow $\sh(\str{U}^+)\in \Forb(\F)$ is universal for $\Forb(\F)$.
 \item\label{mainthm2:c} $\Forb(\F)$ contains an $\omega$-categorical universal structure.
\end{enumerate}
\end{theorem}
Observe that the assumption about the language being relational is necessary in Theorem~\ref{mainthm2}
as shown in the following example.
\begin{example}
Consider language $L$ containing one unary function and $\F=\emptyset$.
While $\F$ is regular, there is no $\omega$-categorical universal structure for all structures with
a single unary function because there are infinitely many non-isomorphic vertex closures and thus also infinitely many
orbits of vertices.
\end{example}

Theorems~\ref{mainthm} and~\ref{mainthm2} are proved in Sections~\ref{sec:homogenization} and~\ref{sec:classification}  of this paper.

\subsection{Existence of precompact Ramsey lifts}
\label{sec:NR}
In this section we give a strengthening of the following classical result:
\begin{theorem}[Ne\v set\v ril--R\"odl Theorem~\cite{Nevsetvril1977}]
\label{thm:NRoriginal}
Let $\str{A}$ and $\str{B}$ be ordered hypergraphs, then there exists an ordered hypergraph
$\str{C}$ such that $\str{C}\longrightarrow (\str{B})^\str{A}_2$.

Moreover, if $\str{A}$ and $\str{B}$ do not contain an irreducible hypergraph
$\str{F}$ (as an non-induced sub-hypergraph) then $\str{C}$ may be chosen
with the same property.
\end{theorem}
In this original formulation (see~\cite{Nevsetvril1989}) the theorem speaks of hypergraphs (or set systems)
with additional linear order on vertices. This linear order has no further constraints (it is free) and is treated specially throughout the
proof. In other words, the theorem states that for every family $\mathcal {E}$ of finite irreducible hypergraphs the lift of the class of all
finite hypergraphs in $\Forbm(\mathcal E)$ adding a free linear order on vertices is a Ramsey class.

In this section we first
give a re-formulation of this theorem in the language of
relational structures with a small strengthening stated as Theorem~\ref{thm:NR} below.  Then we proceed with the main result of this section (Theorem~\ref{thm:main}) which strengthens
the Ne\v set\v ril--R\"odl Theorem for classes with forbidden homomorphisms and closures.

In this section, the language of every Ramsey class $\K$ will always contain a binary relation
$\leq$. Most often $\leq$ will (in every structure of the class $\K$) represent a linear
order. However, we will also work with structures where $\leq$ is not a linear
order. To distinguish this we say that a structure $\str{A}$ is \emph{ordered} if the relation
$\leq$ forms a linear order on $A$.  If there is no further restriction on $\leq$ then it
is called a \emph{free ordering}.
We say that a relational $L$-structure $\str{F}$ is \emph{irreducible without order} if the shadow of $\str{F}$ removing the relation $\leq_\str{F}$ is irreducible. 
An $L$-structure $\str{F}$ is an \emph{ordered irreducible} structure if it is both ordered and
irreducible without order.

Now we are ready to formulate Theorem~\ref{thm:NRoriginal} in our language:
\begin{theorem}[Ne\v set\v ril--R\"odl Theorem for relational structures]
\label{thm:NR}
Let $L$  be a relational language containing a binary relation $\leq$
and $\mathcal E$ be a (possibly infinite) family of ordered irreducible $L$-struc\-tures.
Then the class of all finite ordered structures in $\Age(\Forbi(\mathcal E))$ is a Ramsey class.
\end{theorem}
There are two differences compared to the original formulation.
First, structures in class $\mathcal E$ are ordered (we thus do not speak of a lift of the class adding a free order, but rather a constrained relation $\leq$).  This allows us to use
Theorem~\ref{thm:NR}  to show, for example, the Ramsey property of acyclic graphs as shown in~\cite{Nevsetvril1984} (see Corollary~\ref{cor:acyclic}).  Second, we speak of forbidden embeddings (and
thus substructures). However both these strengthening follow by the same proof as presented in~\cite{Nevsetvril1989}.

\medskip

From now on, we again consider languages involving both functions and relations.
The linear order will continue to be special in our results, too. The following
notion captures the properties of structures that can be forbidden as homomorphic images:
\begin{definition}
\label{defn:weakorder}
Let $L$ be a language containing a binary relation $\leq$.
An $L$-structure $\str{F}$ is \emph{weakly ordered} if 
\begin{enumerate}
\item $\leq_\str{F}$ can be completed to linear order (in other words, it forms a reflexive acyclic digraph), and
\item for every pair of distinct vertices $a, b\in F$, either $(a,b)\in \leq_\str{F}$ or $(b,a)\in \leq_\str{F}$
if and only if $a,b$ is contained in an irreducible substructure of $\str{F}^-$ where $\str{F}^-$ is a shadow of $\str{F}$
in the language $L^-=L\setminus \{\leq\}$.
\end{enumerate}
(In other words, $\leq_\str{F}$ is an acyclic orientation of the Gaifman graph of the shadow of $\str{F}$ removing the relation $\leq_\str{F}$.)
\end{definition}
Note that in a weakly ordered structure $\str{F}$ the relation $\leq_\str{F}$ may be neither a partial order nor a
linear order, it is just a (special --- reflexive but otherwise acyclic) digraph. Weakly ordered structures typically arise as free
amalgamations of ordered structures.  In a weakly ordered structure $\str{F}$,
$\leq_\str{F}$ is a linear order if and only if $\str{F}$ is irreducible without order.

Sufficient conditions for the existence of a precompact Ramsey lift can now be formulated
as follows.

\begin{theorem}
\label{thm:main}
Let $L$ be a language containing a binary relation  $\leq$ and let $\mathcal F$ be a (possibly infinite) regular family of finite connected weakly ordered $L$-structures which is closed for homomorphism-embedding images.
Further assume that the class of all finite ordered structures in $\Forb(\mathcal F)$ is a locally finite subclass of the class of all finite ordered $L$-structures.
Then the class $\mathcal K$ of all finite ordered structures in $\Forb(\F)$ has a precompact Ramsey lift.

More specifically:
The class $\mathcal K_\F$ of all ordered maximal $\F$-lifts of structures in $\mathcal K$ is a Ramsey class
and
for every pair of struc\-tures $\str{A}, \str{B}\in\mathcal K_\mathcal F$ there exists a 
structure $\str{C} \in \mathcal K_\mathcal F$  such that
$$
\str{C} \longrightarrow (\str{B})^{\str{A}}_2.
$$
If $L$ is a relational language then the lift $\mathcal K_\mathcal F$ has the lift property
with respect to $\K$.
\end{theorem}
\begin{remark}
The condition on the class of ordered structures in $\Forb(\F)$ being locally finite subclass of the class of all ordered structures can be re-formulated as follows: For every structure
$\str{C}_0$ there exists $n(\str{C}_0)$ such that for every structure $\str{C} \notin \Forb(\F)$ with a homomorphism-embedding to $\str{C}_0$ there
is $\str{F}\in\F$ with at most $n(\str{C}_0)$ vertices and a homomorphism-embedding $\str{F}\to \str{C}$.
\end{remark}

 For relational languages and finite families $\F$ of finite connected relational structures, we arrive to the following characterisation:

\begin{corollary}[Ramsey classes with forbidden homomorphism-embeddings]
\label{cor:forbH}
Let $L$ be a relational language and let $\F$ be a finite family of finite connected $L$-structures.
Then the class $\Age(\Forb(\F))$ has a precompact Ramsey lift with the lift property.
\end{corollary}
\begin{proof}
Expand the language $L$ to $L'$ by adding a binary relation $\leq$ and consider the class $\F'$ consisting of 
all weakly ordered structures $\str{F}$ such that their $L$-shadow is a homomorphic image of a structure in $\F$.
Because $\F$ is finite, one can verify that $\F'$ is also finite and thus a regular family.
In this setting we can apply Theorem~\ref{thm:main}.
\end{proof}

\subsection{Proof of Theorem~\ref{mainthm}}
\label{sec:homogenization}
  In this section we prove the essentially model-theoretic Theorem~\ref{mainthm}
which gives an explicit homogenisation of classes $\Forb(\F)$.
This
extends the construction in~\cite{Hubicka2013} for the case of regular infinite families $\mathcal
F$ and particularly for families without an upper bound on the size of minimal separating cuts (thus completing our
techniques to all classes with a precompact homogenisation). Note also that we use  homomorphism-embeddings, instead of homomorphisms.  Thanks to the use of maximal
lifts we not only simplify the argument of~\cite{Hubicka2013}, but
more importantly, we obtain an existentially complete homogenising lift, which, in
turn, gives the lift property of the resulting Ramsey lift. We also, for the first time in this context, consider functions. 
All in all, this part may be seen as a generalisation of~\cite{Hubicka2009, Hubicka2013, Hartman2014}.

Recall that for every $\str{X}\in \Lifts_\mathcal F$ there exists a structure $\str{A}\in\Forb(\F)$ such that $A\supseteq X$, $\str{X}$ is induced on $X$ by $L_\F(\str{A})$
and $L_\F(\str{A})$ is maximal on $X$. We will call such $\str A$ a \emph{witness} of the fact that $\str{X}$ belongs to $\Lifts_\mathcal F$ and denote it by $W(\str{X})=\str A$. Note that the choice of $W(\str{X})$ is not unique.

Given a piece $\Piece=(\str{P},\vv{R})$ of a structure
$\str{F}$, we call $\Piece'=(\str{P}',\vv{R}')$ a {\em
sub-piece} of $\Piece$ if $\Piece'$ is a piece of $\str{F}$ and $P'\subseteq P$.

The key technical part of our construction (and of the proof of Theorem~\ref{mainthm}) is expressed in the following lemma.

\begin{lemma}
\label{lem:amalgamationstr}
Let $L$ be language and let $\mathcal F$ be a family of connected $L$-structures closed for homo{\-}morphism-embedding images.
Let $\str{A}$ and $\str{B}$ be both witnesses of the fact that $\str{X}\in\Lifts_\F$. Then the free amalgamation of $\str{A}$ and $\str{B}$ over the structure induced on $X$ by both $\str{A}$ and $\str{B}$ is also a witness of $\str{X}\in\Lifts_\F$.
\end{lemma}
The main idea of the proof of Lemma~\ref{lem:amalgamationstr} is to use the maximality of $\str{A}$ and $\str{B}$ on $\str{X}$ to show that if
there is an homomorphism-embedding from $\str{F}\in \mathcal F$ to the free amalgamation of $\str{A}$ and $\str{B}$ over $X$ then there is also
an homomorphism-embedding from $\str{F}'\in \mathcal F$ to both $\str{A}$ and $\str{B}$ obtaining a contradiction with $\str{A},\str{B}\in \Forb(\mathcal F)$. 

Towards this direction we define the following flip operation.
\begin{definition}[flip operation]
\label{defn:flip}
Let $\str{A}$ and $\str{B}$ be both witnesses of the fact that $\str{X}\in\Lifts_\F$, let $\str{D}$ be the free amalgamation of $\str{A}$ and $\str{B}$ over $X$, let
$f$ be a homomorphism-embedding from $\str{F}\in \F$ to $\str{D}$ and let $\Piece$ be a piece of $\str{F}$ whose image under $f$ is in $\str{A}$ such that the image of its root is in $X$. Then the \emph{flip} of the piece $\Piece$ from $\str{A}$ to $\str{B}$ is a structure $\str{F}'$ created from $\str{F}$ by replacing $\Piece$ by $\Piece_2$ which is $\sim_F$ equivalent to $\Piece$, along with a homomorphism $f'\colon\str{F}'\to \str{D}$ such that $\Piece_2$ is mapped to $\str{B}$ and $f'$ agrees with $f$ otherwise.
\end{definition}
The flip is schematically depicted in Figure~\ref{fig:flip}.
It follows directly from definition of $\sim_\mathcal F$ that $\str{F}'$ is isomorphic to a structure in $\mathcal F$.
Before proving Lemma~\ref{lem:amalgamationstr} we first prove the fact that flips are always possible.
\begin{figure}
\centering
\includegraphics{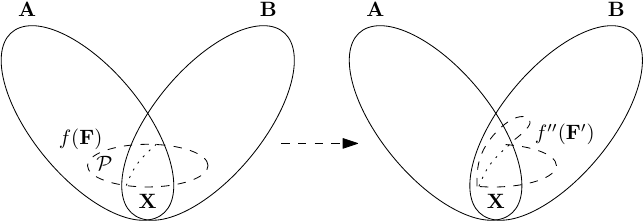}
\caption{The flip operation.}
\label{fig:flip}
\end{figure}
\begin{lemma}
\label{lemma:flip}
Let $\str{A}$, $\str{B}$, $\str{D}$, $\str{F}$, $f$ and $\Piece$ be as in Definition~\ref{defn:flip}. Then there exists $\str{F}'$, $\Piece'$ and $f'$ forming a flip of $\Piece$ from $\str{A}$ to $\str{B}$.
\end{lemma}
\begin{proof}
Denote by $i$ the index such that $\Piece=(\str{P},\vv{R})$ belongs to the equivalence class $\PieceEq_\F^i$ of $\sim_\F$ (see Section~\ref{sec:Flifts}).
Because $f$ is a homomorphism of $\Piece\to \str{A}$ we know that 
$f(\vv{R})\in \ext{X}{i}$ and thus also
$f(\vv{R})\in \ext{L_\F(A)}{i}$ and $f(\vv{R})\in
\ext{L_\F(B)}{i}$. Consequently, there exists a piece $\Piece_2=(\str{P}_2,\vv*{R}{2})$ and a homomorphism-embedding $f_2\colon\str{P}_2 \to \str{B}$
such that $\Piece_2\sim_\F \Piece$ and $f_2(\vv*{R}{2})= f(\vv{R})$. Consider $\str{F}'$
created from $\str{F}$ by replacing $\Piece$ by $\Piece_2$ and a function
$f'\colon F'\to D$ defined as follows:
$$f'(x)=
\begin{cases}
 f_2(x)\hbox{ for }x\in P_2,\\
 f(x)\hbox{ otherwise.}
\end{cases}
$$
Clearly $f'$ is a homomorphism-embedding $\str{F}'\to \str{D}$.
\end{proof}
\begin{proof}[Proof of Lemma~\ref{lem:amalgamationstr}]
\begin{figure}
\centering
\includegraphics{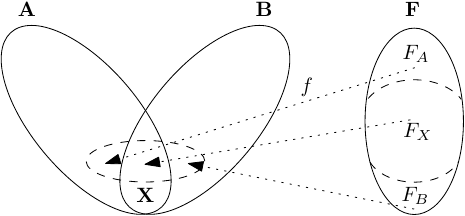}
\caption{An amalgamation of maximal $\F$-lifts.}
\label{fig:liftamal}
\end{figure}
Denote by $\str{D}$ the free amalgamation of $\str{A}$ and $\str{B}$ over $X$.
From the maxi\-mality of $\str{X}$ in both $\str{A}$ and $\str{B}$ we know that $\str{D}$ is a witness of $\str{X}$ if
$\str{D}\in \Forb(\F)$. Assume, to the contrary, that $\str{D}\notin \Forb(\F)$ and thus
there is $\str{F}\in \F$ and a homomorphism-embedding $f\colon\str{F}\to\str{D}$. Because $\F$ is closed for homomorphism-embedding images, we can also assume $f$ to be injective.
Then $f$ partitions the vertex set of $\str{F}$ into three sets defined as follows: $F_X$
are vertices with image in $X$, $F_A$ are vertices with image in
$A\setminus X$ and $F_B$ are vertices with image in $B\setminus X$.
Without loss of generality we can assume that $\str{F}$ and $f$ were chosen so that $\vert F_A\vert $ is minimal. Clearly $\vert F_A\vert \geq 1$. The situation is depicted in Figure~\ref{fig:liftamal}.  Observe that $F_X$ is a cut of $\str{F}$ separating $F_A$ and $F_B$. 

 Denote by $\Piece=(\str{P},\vv{R})$ a piece of $\str F$ with its root contained in $F_X$ which contains a vertex of $F_A$ (such a piece can be obtained by Proposition~\ref{prop:sep}). If $P\subseteq F_A\cup F_X$, by Lemma~\ref{lemma:flip} we can use the flip operation for piece $\Piece$ of $\str{F}$ from $\str{A}$ to $\str{B}$ contradicting the minimality of $F_A$.
We thus conclude: 
\begin{claim}\label{claim:1}
Every piece of $\str{F}$ with root in $F_X$ containing a vertex of $F_A$ must also contain a vertex of $F_B$.
\end{claim}

Choose $\Piece'=(\str{P}',\vv{R}')\in \PieceEq_\F^j$ to be a piece containing vertices of both $F_A$ and $F_B$ with the minimal number of non-root vertices among all pieces with this property.
If $\Piece'$ contains a sub-piece with root in $F_X$ which is contained in $F_X\cup F_B$, we can perform the flip of $\Piece'$ from $\str{B}$ to $\str{A}$.
If this procedure eliminates all vertices of $P'\cap F_B$ we get
a homomorphism-embedding $f'\colon\str{P}'\to \str{A}$, $f'(\vv{R}')=f(\vv{R}')$, and therefore $f(\vv{R}')\in \ext{L_\F(A)}{j}$ which contradicts
the minimality of $\vert F_A\vert $ as in Claim~\ref{claim:1}. It follows that:
\begin{claim}
\label{claim:2}
Every piece $\Piece'$ of $\str{F}$ containing vertices of $F_A$ and minimising number of non-root vertices contains a component $B'$ of $\str{F}$ with cut $F_X$ which cannot be eliminated from $\Piece'$ by a flip operation of a piece of $\str{F}$ contained in $\Piece'$ from $\str{B}$ to $\str{A}$.
Consequently, every piece of $\str{F}$ containing $B'$ contains also some vertices of $F_A$.
\end{claim}

Denote by $A'$ a component of $\str{F}$ with cut $F_X$ contained in $\Piece'$ consisting of vertices of $F_A$ (such a component must exist because $\Piece'$ was chosen to contain a vertex of $F_A$).
In the following we separate $A'$ and $B'$ within $\Piece'$. This cannot be done by a direct application of
Proposition~\ref{prop:sep} because the minimal separating cut that separates $A'$ and $B'$ in $\str{F}$ may 
contain some vertices of $F\setminus P'$.

Denote by $F'$ the set of vertices of any connected component of $\str{F}\setminus P'$ such that $R'\subseteq \Cl_\str{F}(N_\str{F}(F'))$ (such a component exists because $R'$ is a minimal separating cut). 
By an application of Proposition~\ref{prop:sep} on cut $F_X\cap P'$ and components $F'$
and $B'$ one gets that $R'\subseteq \Cl_\str{F}(N_{\str{F}}(B'))$, otherwise one would obtain
a sub-piece which would contradict the minimality of $\Piece'$ or Claim~\ref{claim:2}.

Again using Proposition~\ref{prop:sep} on cut $F_X$ and
components $A'$ and $B'$ we obtain a minimal separating cut $C$. Clearly $R'\subseteq C$ because $R'\subseteq \Cl_\str{F}(N_\str{F}(A'))\cap \Cl_\str{F}(N_\str{F}(B'))$. $C$ must contain some additional vertices of $F_X\cup (P'\setminus R')$ because $\str{P}'\setminus R'$ is connected and $F_X$ separates $A'$ and $B'$. The pieces obtained are thus proper sub-pieces
of $\Piece'$ that either contain both vertices of $F_A$ and $F_B$ or they can be used for the flip operations. 
In all these cases this yields a contradiction.
\end{proof}

\begin{proof}[Proof of Theorem~\ref{mainthm}]
The class $\Lifts_\mathcal F$ of all maximal $\F$-lifts is clearly hereditary, isomorphism closed and has the joint
embedding property. 
Thus to show that $\Lifts_\mathcal F$ is an amalgamation class it remains to
verify that $\Lifts_\mathcal F$ has the amalgamation property.

Consider $\str{X},\str{Y},\str{Z}\in \Lifts_\mathcal F$. Assume that
$\str{Z}$ is a substructure of both $\str{X}$ and
$\str{Y}$ and without loss of generality assume that $X\cap Y=Z$.

Put
\begin{eqnarray*}
\str{A}&=&W(\str{X}),\\
\str{B}&=&W(\str{Y}),\\
\str{C}&=&\sh(\str{Z}).
\end{eqnarray*}

Now consider $\str{D}$, the free amalgamation of $\str{A}$ and $\str{B}$
over $\str{C}$.  As shown by Lemma~\ref{lem:amalgamationstr}, $\str{D}$ is a witness of
$\str{Z}$ and also a witness of $\str{A}$ and $\str{B}$.  Now find
$\str{E}\in \Forb(\F)$ containing $\str{D}$ as a substructure such that
$L_\F(\str{E})$ is maximal on $D$.
It follows that the structure induced on $D$ by $L_\F(\str{E})$ is a strong amalgamation of
$\str{X}$ and $\str{Y}$ over $\str{Z}$.

By the maximality condition it also follows that the shadow of the \Fraisse{} limit of $\Lifts_\F$
is existentially complete in $\Forb(\F)$.
\end{proof}

\subsection{Proof of Theorem~\ref{mainthm2}}
\label{sec:classification}

Theorem~\ref{mainthm2} gives a characterisation of those families $\F$ such that $\Forb(\F)$ contains
an $\omega$-categorical universal structure.
This is related to (and generalises) forbidden homomorphism theorem of~\cite{Cherlin1999}. This is also in contrast with forbidden monomorphisms where the corresponding characterisation is a well known problem conjectured to be undecidable~\cite{Cherlin2011}.
For a family $\F$ of finite connected $L$-structures denote by $\overline{\F}$ the (complementary) class
of all connected $L$-structures not isomorphic to any structure in
$\F$.  First we show that regular families are closed for complements:
\begin{lemma}
\label{doplnek}
Let $L$ be a language.
For every class of finite connected $L$-structures $\F$ it holds that $\F$ is regular if and only if $\overline{\F}$ is regular.
\end{lemma}
\begin{proof}
Clearly it suffices to show only one implication. Assume that $\F$ is regular.
Now consider $\overline{\Piece}$, a piece of some $\overline{\str{F}}\in
\overline{\F}$.  Denote by $\overline{\Incompatible}_{\overline{\Piece}}$ 
the set of all rooted structures incompatible with $\overline{\Piece}$ with
respect to $\overline{\F}$ (see Definition~\ref{def:incompatible}).  There are two cases:
\begin{enumerate}
\item $\overline{\Piece}$ is not isomorphic to any piece $\Piece$ of any structure $\str{F}\in \F$.
In this case for every rooted structure $\APiece$ such that $\APiece\oplus\overline{\Piece}$ is defined we have
that $\APiece\oplus\overline{\Piece}$ is not isomorphic to any structure in $\F$, consequently $\APiece\oplus\overline{\Piece}\in \overline{\F}$ and thus $\APiece \in \overline{I}_{\overline{\Piece}}$.
\item  $\overline{\Piece}$ is isomorphic to some piece $\Piece$ of some $\str{F}\in \F$.
In this case for every rooted structure $\APiece$ such that $\APiece\oplus\overline{\Piece}$ is defined we have that
$\APiece\oplus\Piece$ is isomorphic to some structure in $\F$ if and only if $\APiece\oplus\overline{\Piece}$ is
not isomorphic to any structure in $\overline{\F}$.  It follows that $\APiece\in \Incompatible_\Piece$ if and only if
$\APiece \notin \overline{\Incompatible}_{\overline{\Piece}}$.
\end{enumerate}
We have shown that the sets $\overline{\Incompatible}_{\overline{\Piece}}$ are,
in a certain sense, complements of the sets $\Incompatible_\Piece$ and thus by regularity of $\F$ there
are only finitely many different sets
$\overline{\Incompatible}_{\overline{\Piece}}$ of pieces of $\overline{\F}$ with any given width $n\geq 1$.
It follows that $\overline{\F}$ is regular.
\end{proof}
\begin{proof}[Proof of Theorem~\ref{mainthm2}]
\ref{mainthm2:a}$\implies$\ref{mainthm2:b} follows from Theorem~\ref{mainthm} for the class $\F$.

\ref{mainthm2:b}$\implies$\ref{mainthm2:c} is immediate. The shadow of every ultrahomogeneous structure
with finitely many relations of a given arity is $\omega$-categorical.

To see that \ref{mainthm2:c}$\implies$\ref{mainthm2:a} we first observe that for every $\omega$-categorical
structure $\str{U}$ the family $\mathcal C$ 
consisting of all connected structures in
$\Age(\str{U})$ is a regular family.  Fix $n\geq 1$ and consider two pieces
$\Piece=(\str{P},\vv{R})$ and
$\Piece'=(\str{P},\vv{R})$. Denote by $O_\Piece$ the set of all
orbits of $n$-tuples of the automorphism group of $\str{U}$ such that there
exists a homomorphism-embedding $f\colon\str{P}\to \str{U}$ with the tuple
$f(\str{\vv{R}})$ being in the orbit.  It is easy to see that
$O_\Piece=O_{\Piece'}$ implies $\Piece\sim_\F\Piece'$.  This gives the 
regularity of $\F$.

Consider an upwards closed family $\F$ and such that $\Forb(\F)$ contains an
$\omega$-categorical universal structure $\str{U}$.  It is easy to see that
$\overline{\F}$ is precisely the family of all connected structures in
$\Age(\str{U})$.  Because the family $\overline{\F}$ is regular, the family
$\F$ is regular by Lemma~\ref{doplnek}. 
\end{proof}

\subsection{Proof of Theorem~\ref{thm:main}}
\label{sec:mainresult}
By now this is an easy application of our lift construction together with the proof of Theorem~\ref{thm:mainstrongclosures}.
\begin{proof}[Proof of Theorem~\ref{thm:main}]
By Theorem~\ref{mainthm} we obtain a class $\Lifts_\mathcal F$ which is a lift of $\Forb(\F)$ with strong amalgamation. 
The class $\K_\mathcal F$ is then the subclass of $\Lifts_\mathcal F$ consisting of all maximal
lifts of structures in $\K$.

Given $\str{A}, \str{B}\in \K_\mathcal{F}$, denote by $\overline{\str{B}}$ a maximal
lift of a witness of $\str{B}$ (which is finite, because $\F$ is regular) and by
Theorem~\ref{thm:NR} we obtain $\str{C}'_0$ such that
$$\str{C}'_0\longrightarrow(\overline{\str{B}})^{\str{A}}_2.$$
By an application of Lemma~\ref{lem:closures} obtain an ordered structure $\str{C}_0$ such that
$$\str{C}_0\longrightarrow(\overline{\str{B}})^{\str{A}}_2$$
and moreover we have  a homomorphism-embedding $\str{C}_0\to \str{C}'_0$.

Now by the regularity of $\F$ there exists a finite $\F_0$ such that every structure $\str{A}\in
\Forb(\F_0)$ with a homomorphism-embedding to $\str{C}'_0$ is also in
$\Forb(\F)$.  Denote by $n$ the size of the largest structure in $\F_0$ and
construct $\str{C}_1, \str{C}_2, \ldots, \str{C}_n$ by the repeated application of
Lemma~\ref{lem:iteratedpartitestep2} such that for every $1\leq j\leq n$ the following holds:
\begin{enumerate}
\item $\str{C}_j\longrightarrow (\overline{\str{B}})^{\str{A}}_2$, 
\item $\str{C}_j$ has a homomorphism-embedding to $\str{C}'_0$,
\item every substructure of $\str{C}_j$ with at most $j$ vertices has a completion in $\Lifts_\mathcal F$.
\end{enumerate}

We obtain  $\str{C}_n$ where the shadow of every substructure with at most $n$ vertices has a completion in $\Forb(\F)$.
We conclude that the shadow of $\str{C}_n$ is in $\Forb(\F_0)$ and because there is also a homomorphism-embedding
from $\str{C}_n$ to $\str{C}'_0$ we know that the shadow of $\str{C}_n$ is in $\Forb(\F)$. 

Let $\str{C}$ be a maximal lift of the shadow of $\str{C}_n$ with $\leq_\str{C}$ completed to linear order. Because $\F$ is a family of weakly ordered structures we know that the shadow of $\str{C}$ is in $\Forb(\F)$.
By the maximality of $\str{B}$ in $\overline{\str{B}}$ it follows that every copy of $\str{B}$ which is maximal in a copy of $\overline{\str{B}}$ in $\str{C}_n$ is preserved in $\str{C}$. It follows that
$$\str{C}\longrightarrow (\str{B})^\str{A}_2.$$

The lift property of $\mathcal K_\F$ follows from the maximality of lifts:
given $\str{A}\in \K_\mathcal F$ we construct $\str{B}$ as the disjoint union of witnesses
of all maximal lifts of $\str{A}$ and apply the above proof.
\end{proof}
\begin{remark}The second part of the proof (after the lift is constructed) is essentially the same as the proof of Theorem~\ref{thm:mainstrongclosures}.
It is however more convenient to give the proof by means of Lemma~\ref{lem:closures} and~\ref{lem:iteratedpartitestep2} because we do
not need to go into a further analysis of the homogenising lift.
\end{remark}

\section{Examples of Ramsey classes}
\label{sec:examples}
We believe that Theorems~\ref{thm:mainstrong} and~\ref{thm:mainstrongclosures} generalise most proofs used in the structural Ramsey theory.
It is however often not obvious how to verify that a given class is a locally finite subclass of a known Ramsey class (which is needed to apply Theorem~\ref{thm:mainstrong})
or that a given class is a multiamalgamation class (for Theorem~\ref{thm:mainstrongclosures}). In this section we give multiple examples which are chosen to demonstrate different techniques used to verify conditions needed to apply our main results.

We start by recalling some classical corollaries of the Ne\v set\v ril-R\" odl Theorem (in Section~\ref{sec:graphs}) and then we show the Ramsey property of
several classes (old and new) and thus illustrate the versatility of applications of Theorem~\ref{thm:mainstrong} (in Sections~\ref{sec:examplesstrong} and~\ref{sec:examplesclosure}), Theorem~\ref{thm:mainstrongclosures} (in Section~\ref{sec:manyorders}) and Theorem~\ref{thm:main} (in Section~\ref{sec:exampleslifts}).

Unless explicitly
stated, all our examples of lifts are precompact and have the lift property. 

\subsection {Ramsey lifts of free amalgamation classes}
\label{sec:graphs}
\label{sec:bipgraphs}
\label{sec:acyclic}

Recall the Ne\v set\v ril--R\"odl theorem (Theorem~\ref{thm:NR}).  In the
model-theoretic context it is often understood as a theorem about Ramsey lifts
of free amalgamation classes of relational structures. Since our techniques deal also with different
kinds of amalgamation (free amalgamation, strong amalgamation and amalgamation
with closures) let us first state a variant of Theorem~\ref{thm:NR}
in the more refined setting.
\begin{definition}[Ordered free amalgamation property]
  Let $L$ be a language containing a binary relation $\leq$.  We say that a class
  $\K$ of ordered $L$-structures has the \emph{ordered free amalgamation property} if for
  every $\str{A},\str{B}_1,\str{B}_2\in \K$ every ordered structure $\str{C}$ created
  as a free amalgamation of $\str{B}_1$ and $\str{B}_2$ over $\str{A}$ with
  $\leq_\str{C}$ completed arbitrarily to a linear order of $C$ is in $\K$.
\end{definition}
Note that it is not true that every class $\K$ with the ordered free amalgamation
property would become a free amalgamation class if order is removed. For example,
the class of all finite acyclic graphs with a linear extension has the ordered free
amalgamation property but the class of all finite acyclic graphs is not an
amalgamation class. 
\begin{corollary}[of Theorem~\ref{thm:NR}]
\label{cor:freeamalg}
  Let $L$ be a relational language containing a binary relation $\leq$ and let $\K$ be an
  amalgamation class of ordered $L$-structures with the ordered
  free amalgamation property. Then $\K$ is a Ramsey class.
\end{corollary}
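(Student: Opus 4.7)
The plan is to reduce Corollary~\ref{cor:freeamalg} directly to the Ne\v set\v ril-R\"odl Theorem (Theorem~\ref{thm:NR}) by exhibiting $\K$ as the class of all finite ordered $L$-structures in $\Forbi(\mathcal E)$ for a suitable family $\mathcal E$ of ordered irreducible $L$-structures. I take $\mathcal E$ to be the family of all (isomorphism types of) finite ordered irreducible $L$-structures that do \emph{not} belong to $\K$.

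The central observation is that every ordered $L$-structure is automatically irreducible in the sense of Definition~\ref{def:irreducible}: for any two distinct vertices $u,v$ of such a structure $\str A$, since $\rel{A}{\leq}$ is a linear order, at least one of $(u,v)$ or $(v,u)$ lies in $\rel{A}{\leq}$, producing a binary tuple containing both $u$ and $v$. Consequently every finite ordered $L$-structure either lies in $\K$ or belongs to $\mathcal E$, and in particular every structure in $\mathcal E$ is indeed ordered irreducible (so that Theorem~\ref{thm:NR} is applicable to this $\mathcal E$).

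Next I verify that $\K$ coincides with the class of all finite ordered $L$-structures in $\Forbi(\mathcal E)$. The inclusion $\K\subseteq\Forbi(\mathcal E)$ uses only the hereditary property that every amalgamation class enjoys: if some $\str F\in\mathcal E$ embedded into some $\str A\in\K$, then by heredity $\str F\in\K$, contradicting $\str F\in\mathcal E$. Conversely, suppose a finite ordered $L$-structure $\str A$ lies in $\Forbi(\mathcal E)$. By the observation above $\str A$ is ordered irreducible, so if $\str A$ were outside $\K$ we would have $\str A\in\mathcal E$, and the identity embedding $\str A\to\str A$ would contradict $\str A\in\Forbi(\mathcal E)$. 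Hence $\str A\in\K$.

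Since every element of $\mathcal E$ is ordered irreducible, Theorem~\ref{thm:NR} tells us that the class of all finite ordered $L$-structures in $\Forbi(\mathcal E)$ is Ramsey, and by the identification of the previous paragraph this class is precisely $\K$. There is essentially no obstacle to this argument: once one notices that the binary relation $\rel{}{\leq}$ automatically forces every ordered structure to be irreducible, the corollary follows in one step from Theorem~\ref{thm:NR}. The ordered free amalgamation hypothesis is the natural easy-to-check condition that guarantees the existence of many interesting $\K$ to which this corollary applies; in the proof itself only the hereditary and amalgamation properties of $\K$ and the presence of the linear order are used.
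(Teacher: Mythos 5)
There is a genuine gap in your argument, and it is exactly where you announce that the argument does not need the ordered free amalgamation hypothesis. You have conflated two distinct notions: ``irreducible'' in the sense of Definition~\ref{def:irreducible} and ``ordered irreducible'' as defined just before Theorem~\ref{thm:NR}. The former only asks that every pair of vertices lie in some tuple of some relation, and of course every ordered structure satisfies this via $\rel{}{\leq}$. But ``ordered irreducible'' requires the pair to lie in a tuple of some relation \emph{other than} $\rel{}{\leq}$; it is irreducibility of the shadow obtained by deleting $\rel{}{\leq}$. Theorem~\ref{thm:NR} requires the forbidden family $\mathcal{E}$ to consist of ordered irreducible structures in this stronger sense. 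For example, with $L=\{\rel{}{\leq},\rel{}{E}\}$, the ordered two-vertex structure with no $\rel{}{E}$-tuples is irreducible in the weak sense but not ordered irreducible, so it is far from true that ``every ordered $L$-structure is ordered irreducible.''

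This breaks the converse inclusion in your identification $\K = \Forbi(\mathcal{E})\cap\{\text{finite ordered}\}$: given a finite ordered $\str A\notin\K$, you cannot conclude $\str A\in\mathcal{E}$, because $\str A$ may fail to be ordered irreducible. The correct argument, which is the one the paper gives, takes $\mathcal{E}$ to be the \emph{minimal} obstacles (those $\str F\notin\K$ all of whose proper substructures are in $\K$) and uses ordered free amalgamation to show these are ordered irreducible: if a minimal obstacle $\str F$ had two vertices $u,v$ meeting only in $\rel{}{\leq}$, then $\str F$ would be an ordered free amalgamation of the two proper substructures omitting $u$ and $v$ respectively, both in $\K$, forcing $\str F\in\K$ --- a contradiction. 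This is precisely the step where ordered free amalgamation is indispensable, and it is why, e.g., the class of partial orders with linear extension (an amalgamation class of ordered structures, but not one with ordered free amalgamation, and whose minimal obstacle, the quasi-cycle, is not ordered irreducible) cannot be handled by Theorem~\ref{thm:NR} alone and instead requires Theorem~\ref{thm:mainstrong}. Your final remark that ``only the hereditary and amalgamation properties of $\K$ and the presence of the linear order are used'' is therefore the clearest symptom that something has gone wrong.
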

\begin{proof}
Let $\K$ be a class of ordered $L$-structures with the ordered
free amalgamation property.  Denote by $\mathcal E$ the class of all ordered
$L$-structures $\str F$ such that $\str F\notin \K$ and every proper
substructure of $\str F$ is in $\K$ (\ie{}\ the family of minimal obstacles).

Clearly $\K=\Age(\Forbi(\mathcal E))$.
To show that $\K$ is Ramsey, we apply Theorem~\ref{thm:NR}. For that, we only
need to verify that $\mathcal E$ is a family of ordered irreducible structures.

Consider, to the contrary, that there is $\str{F}\in \mathcal R$ such that it is an
ordered free amalgamation of two of its proper substructures $\str{F}_1$ and $\str{F}_2$
By the construction of $\mathcal E$ we have that $\str{F}_1,\str{F}_2\in \K$. But then $\str{F}\notin \K$ gives
a contradiction with $\K$ satisfying ordered free amalgamation property.
\end{proof}

As a warmup for many examples below, let us give several other special cases of Theorem~\ref{thm:NR} and Corollary~\ref{cor:freeamalg}. In the process, we will also demonstrate two techniques to overcome the limitations of Theorem~\ref{thm:NR} and Corollary~\ref{cor:freeamalg}.

The easiest class we discuss is the class of all finite directed graphs (digraphs) $\mathcal D$ and its lift $\ordclass {\mathcal D}$ adding a linear order
on vertices. More precisely, $\mathcal D$ consists of structures in the language consisting of a single binary relation $E$ with no restrictions. $\ordclass{\mathcal D}$
extends the language by a binary relation $\leq$ and consists of all structures $\str{A}$ where $\leq_\str{A}$ is a linear order.
The class $\mathcal G$ of all (undirected) graphs may be viewed as a subclass of $\mathcal D$ consisting of those structures $\str{A}$ where $E$ is symmetric and irreflexive.
$\ordclass{\mathcal G}$ is a lift of $\mathcal G$ adding a free order on vertices.
We immediately obtain:
\begin{corollary}
\label{cor:digraphs}
The class $\ordclass{\mathcal {D}}$ of all finite directed graphs with a free ordering of vertices is a Ramsey class.
The class $\ordclass{\mathcal {G}}$ of all finite (simple) graphs with a free ordering of vertices is a Ramsey class.
\end{corollary}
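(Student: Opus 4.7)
The plan is to derive both parts of Corollary~\ref{cor:digraphs} as direct instances of Corollary~\ref{cor:freeamalg}. All that is needed is to verify that $\overrightarrow{\mathcal D}$ and $\overrightarrow{\mathcal G}$ are, for the relevant language $L$ containing $\rel{}{\leq}$ and the binary edge relation $\rel{}{E}$, hereditary amalgamation classes of ordered $L$-structures that admit ordered free amalgamation; then the Ramsey property is immediate from Corollary~\ref{cor:freeamalg}.

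For $\overrightarrow{\mathcal D}$ the only constraint on a structure is that $\rel{}{\leq}$ is a linear order; the relation $\rel{}{E}$ is completely unconstrained. Hereditarity is clear, since substructures of linearly ordered digraphs remain linearly ordered digraphs. Given $\str{A},\str{B}_1,\str{B}_2\in \overrightarrow{\mathcal D}$ with $\str{A}$ embedded as a substructure of each $\str{B}_i$, the free amalgamation $\str{C}$ over $\str{A}$ adds no new $\rel{}{E}$-tuples across the parts, so $\rel{C}{E}$ is a valid binary relation with no axioms to violate; the union of the two linear orders is a partial order on $C$, and since any finite partial order admits a linear extension, we can complete $\rel{C}{\leq}$ arbitrarily to obtain an ordered free amalgam in $\overrightarrow{\mathcal D}$. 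Hence Corollary~\ref{cor:freeamalg} applies.

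For $\overrightarrow{\mathcal G}$ we additionally impose symmetry and irreflexivity on $\rel{}{E}$. Both properties are preserved under substructures, so hereditarity holds. Both are also preserved under free amalgamation: no $\rel{}{E}$-tuple spans the two new parts, and the tuples already present in $\str{B}_1$ and $\str{B}_2$ remain symmetric and irreflexive by assumption. Completing $\rel{C}{\leq}$ to a linear order as above gives a member of $\overrightarrow{\mathcal G}$. Thus $\overrightarrow{\mathcal G}$ has ordered free amalgamation and Corollary~\ref{cor:freeamalg} again yields the Ramsey property.

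There is essentially no genuine obstacle here; both classes are textbook free amalgamation classes of ordered structures, and Theorem~\ref{thm:NR} (through Corollary~\ref{cor:freeamalg}) is designed precisely to handle them. Alternatively, one could invoke Theorem~\ref{thm:NR} directly by identifying the family $\mathcal E$ of minimal forbidden substructures: empty for $\overrightarrow{\mathcal D}$, and, for $\overrightarrow{\mathcal G}$, consisting of the single-vertex loop and the two-vertex structure carrying a single non-symmetric $\rel{}{E}$-edge, both of which are ordered irreducible; but the proof via Corollary~\ref{cor:freeamalg} is cleaner since it avoids explicitly enumerating the obstructions.
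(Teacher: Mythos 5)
Your proof is correct and follows exactly the route the paper takes: the paper's own proof simply observes that both $\overrightarrow{\mathcal D}$ and $\overrightarrow{\mathcal G}$ are amalgamation classes with ordered free amalgamation and then invokes Corollary~\ref{cor:freeamalg}. Your write-up just makes the (routine) verification of hereditarity, preservation of symmetry/irreflexivity, and completability of $\rel{C}{\leq}$ explicit, which the paper leaves to the reader.
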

\begin{proof}
Follows from Corollary~\ref{cor:freeamalg} as both $\ordclass{\mathcal {D}}$ and $\ordclass{\mathcal {G}}$ are amalgamation classes with the ordered
free amalgamation property.
\end{proof}
\begin{remark}
The same technique can be also used for the class of all finite digraphs omitting a given set of tournaments
(these are called \emph{Henson graphs}) or the class of all finite graphs omitting $K_n$ for a fixed $n$.
Up to complementation this exhausts all ultrahomogeneous undirected graphs where the lift adding a free order on vertices forms a Ramsey class~\cite{Nevsetvril1989a}.
\end{remark}

Equivalently we can say that $\ordclass{\mathcal {D}}$ is a Ramsey lift of class $\mathcal D$
and $\ordclass{\mathcal {G}}$ is a Ramsey lift of class $\mathcal G$. It is a classical result that the lift $\ordclass{\mathcal {G}}$ has the lift
property (Definition~\ref{defn:ordering}). However, $\ordclass{\mathcal {D}}$
does not: we can order every directed graph such that all vertices with loops come before all vertices without loops.

Consider the class
$\ordclasssup{\mathcal D}{0}$ of all finite directed graphs ordered in a way that
vertices with loops are before vertices without loops.
It is easy to see that $\ordclasssup{\mathcal {D}}{0}$ is also a Ramsey class: Given pairs of
ordered directed graphs $\str{A},\str{B}\in \ordclasssup{\mathcal {D}}{0}$ and an ordered directed graph $\str{C}'\in \ordclass{\mathcal {D}}$
such that $\str{C}'\longrightarrow (\str{B})^\str{A}_2$ (given by Corollary~\ref{cor:digraphs}), we can construct an ordered directed graph
$\str{C}\in \ordclasssup{\mathcal {D}}{0}$ such that $\str{C}\longrightarrow (\str{B})^\str{A}_2$ by reordering vertices of $\str{C}'$
so that all vertices with loops come first without breaking any of the embeddings of $\str{B}$ into $\str{C}$.

Clearly both $\ordclass{\mathcal{D}}$ and $\ordclasssup{\mathcal{D}}{0}$
are Ramsey lifts of $\mathcal D$. One could claim that $\ordclasssup{\mathcal {D}}{0}$ is better
because there are fewer ways to lift a given directed graph. In fact,
it can be shown that $\ordclasssup{\mathcal{D}}{0}$ has the lift property with respect to
$\mathcal {D}$~\cite{Jasinski2013}.

We generalise this observation by the following concept of admissible ordering:
\begin{definition}
Let $L$ be a language containing a binary relation $\leq$. Denote by $\mathcal O_L$ the class of all isomorphism types of $L$-structures with one vertex
and let $\leq_L$ denote a fixed linear order on $\mathcal O_L$.
 Given  an ordered $L$-structure $\str A$, we say that its order is \emph{$\leq_L$-admissible} if for every pair of distinct vertices $u,v\in \str{A}$ it holds that whenever $\str{O}_u<_L\str{O}_v$ then $u \leq_\str{A},v$.
Here $\str{O}_u$ and $\str{O}_v$ are the structures in $\mathcal O_L$ isomorphic to structure inducted by $\str{A}$ on $\{u\}$ and $\{v\}$ respectively.
\end{definition}
The order $\leq_L$ will be usually understood from the context and thus we will just speak of an admissible order of the structure.
\begin{prop}
\label{prop:admisible}
Let $L$ be a language and $\K$, be a class of $L$-structures, and $\leq_L$ be a linear order of $\mathcal O_L$. If the lift $\ordclass \K$ of $\K$ adding a free order on vertices is
a Ramsey class then the lift $\ordclasssup \K{0}$ of $\K$ adding an $\leq_L$-admissible order on vertices is also a Ramsey class.
\end{prop}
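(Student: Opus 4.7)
The plan is to reduce the Ramsey property of $\overrightarrow{\K}_0$ directly to that of $\overrightarrow{\K}$ by a ``stable sort'' trick, analogous to the reordering sketched just above for $\overrightarrow{\mathcal D}_0$. Given $\str{A},\str{B}\in\overrightarrow{\K}_0$, note that admissibly ordered structures are in particular ordered, so $\str{A},\str{B}\in\overrightarrow{\K}$ and the hypothesis supplies $\str{C}'\in\overrightarrow{\K}$ with $\str{C}'\longrightarrow(\str{B})^{\str{A}}_2$. I define $\str{C}$ to have the same vertex set and the same interpretation of every non-order symbol as $\str{C}'$, but with $\rel{}{\leq}$ replaced by the stable sort by singleton type: declare $u\leq_{\str{C}}v$ iff either $\str{O}_u<_L\str{O}_v$, or $\str{O}_u=\str{O}_v$ in $\mathcal O_L$ and $u\leq_{\str{C}'}v$. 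Then $\rel{C}{\leq}$ is a $\leq_L$-admissible linear order, and the shadow in $\K$ is unchanged, so $\str{C}\in\overrightarrow{\K}_0$.

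The technical heart is a transfer lemma: for any subset $Y\subseteq C$ such that the structure induced on $Y$ (in either of $\str{C},\str{C}'$) is admissibly ordered, the induced ordered $L$-structures on $Y$ in $\str{C}$ and in $\str{C}'$ coincide. Indeed, the non-order reducts are identical by construction, and between two vertices of $Y$ of distinct singleton types the admissibility of the induced order forces it to agree with $\leq_L$ in both $\str{C}'$ and $\str{C}$, while between two vertices of the same type the stable sort preserves the original $\str{C}'$-order. Applied to $\str{A}$ and $\str{B}$, the lemma gives $\binom{\str{C}'}{\str{A}}\subseteq\binom{\str{C}}{\str{A}}$ and $\binom{\str{C}'}{\str{B}}\subseteq\binom{\str{C}}{\str{B}}$ at the level of underlying vertex sets; and moreover, for any $\widetilde{\str{B}}\in\binom{\str{C}'}{\str{B}}$, viewed equivalently as a copy of $\str{B}$ in $\str{C}$, the set $\binom{\widetilde{\str{B}}}{\str{A}}$ is the same whether computed in $\str{C}'$ or in $\str{C}$ (it depends only on the intrinsic ordered structure of $\widetilde{\str{B}}$).

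Given any $2$-colouring $\chi\colon\binom{\str{C}}{\str{A}}\to\{1,2\}$, I pull it back to $\chi'\colon\binom{\str{C}'}{\str{A}}\to\{1,2\}$ by $\chi'(X)=\chi(X)$, which is well defined by the transfer lemma. Applying the Ramsey property $\str{C}'\longrightarrow(\str{B})^{\str{A}}_2$ yields $\widetilde{\str{B}}\in\binom{\str{C}'}{\str{B}}$ for which $\binom{\widetilde{\str{B}}}{\str{A}}$ is $\chi'$-monochromatic. The same $\widetilde{B}$ is then a copy of $\str{B}$ in $\str{C}$, and the $\str{A}$-subcopies inside $\widetilde{\str{B}}$ computed in $\str{C}$ coincide set-theoretically with those computed in $\str{C}'$, so they are $\chi$-monochromatic. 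Hence $\str{C}\longrightarrow(\str{B})^{\str{A}}_2$ in $\overrightarrow{\K}_0$, as required.

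The only place where care is needed is the transfer lemma: one must check that extra copies of $\str{A}$ possibly created in $\str{C}$ by the sort (those whose order was not admissible in $\str{C}'$) do not interfere. They do not, because the Ramsey witness $\widetilde{\str{B}}$ is produced in $\str{C}'$ where its induced order is admissible, and so, by the lemma, its whole ordered substructure (and in particular every $\str{A}$-subcopy within it) is identical in $\str{C}'$ and $\str{C}$; the ``extra'' $\str{A}$-copies in $\str{C}\setminus\widetilde{\str{B}}$ play no role at all.
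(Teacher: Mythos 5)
Your proof is correct and follows essentially the same approach as the paper, whose argument is compressed into a single sentence: ``re-ordering the vertices of $\str{C}'$ without breaking any of the desired embeddings of $\str{B}$''. You have made this precise by specifying the reordering as a stable sort keyed on singleton type and by isolating the transfer lemma that justifies why admissibly ordered substructures (and hence copies of $\str{A}$ and $\str{B}$ coming from $\overrightarrow{\K}_0$) are preserved.
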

\begin{proof}
Let $\str{A}, \str{B}$ be structures in $\ordclasssup \K{0}$ and $\str{C}'\in \ordclass \K$ such that $\str{C}'\longrightarrow(\str{B})^\str{A}_2$.
The $\leq_L$-admissibly ordered structure $\str{C}\longrightarrow(\str{B})^\str{A}_2$ is constructed by re-ordering the vertices of $\str{C}'$ without breaking any of the desired
embeddings of $\str{B}$ (which is always possible).
\end{proof}

The phenomenon of admissible orderings is observed already in~\cite{Nevsetvril1989a} and~\cite{Kechris2005} in the context of bipartite graphs where the Ramsey lift adds a unary relation $R$ which
denotes one of the bipartitions (here $R$ may come from ``right''). This representation of bipartite graphs forms a free amalgamation class and thus the lift adding a free order on vertices is Ramsey (by Corollary~\ref{cor:freeamalg}).
Again, this lift does not have the lift property, however it can be obtained by means of Proposition~\ref{prop:admisible}.
Here the admissible ordering can be chosen in such a way that all vertices in the unary relation $R$ are before the remaining vertices. Such an order, which respects the bipartition, is also sometimes called a \emph{convex ordering}~\cite{Kechris2005}.
These observations can be further generalised.
\begin{corollary}
\label{cor:Hcolor}
Let $\str{H}$ be a finite ordered relational structure. Denote by $\CSP(\str{H})$ the class
of all structures with a homomorphism into $\str{H}$. Then the class
all finite ordered structures in $\CSP(\str{H})$ has a Ramsey lift adding $\vert H\vert $ unary relations.
\end{corollary}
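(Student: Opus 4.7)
The plan is to lift each ordered structure $\str{A}\in\CSP(\str{H})$ by recording a choice of homomorphism $\varphi\colon\str{A}\to\str{H}$ through unary labels. Concretely, extend the language $L$ of $\str{H}$ to $L^+=L\cup\{\rel{}{U_h}\mid h\in H\}$, where each $\rel{}{U_h}$ is unary, and let $\mathcal{K}^+$ consist of all finite ordered $L^+$-structures $\str{A}^+$ such that the relations $\{\rel{A^+}{U_h}\}_{h\in H}$ partition $A^+$ and the induced labelling $\varphi_{\str{A}^+}\colon A^+\to H$ is a homomorphism of the shadow into $\str{H}$, i.e.\ for every non-order symbol $\rel{}{}\in L$ and every tuple $(v_1,\dots,v_k)\in\rel{A^+}{}$ one has $(\varphi_{\str{A}^+}(v_1),\dots,\varphi_{\str{A}^+}(v_k))\in\rel{H}{}$. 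Clearly the shadow of $\mathcal{K}^+$ is the class of all finite ordered structures in $\CSP(\str{H})$, and each such shadow admits at most $|H|^{|A|}$ lifts, so $\mathcal{K}^+$ is a precompact lift adding only $|H|$ unary symbols.

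Next I would verify that $\mathcal{K}^+$ is an amalgamation class with ordered free amalgamation, so Corollary~\ref{cor:freeamalg} applies. Hereditarity and joint embedding are immediate from the definition. For amalgamation, take $\str{A}^+,\str{B}_1^+,\str{B}_2^+\in\mathcal{K}^+$ together with embeddings $\alpha_i\colon\str{A}^+\to\str{B}_i^+$. Since embeddings preserve the unary relations $\rel{}{U_h}$, the labellings $\varphi_{\str{B}_i^+}$ agree on the image of $\str{A}^+$, so the free amalgamation $\str{C}^+$ of $\str{B}_1^+$ and $\str{B}_2^+$ over $\str{A}^+$ inherits a well-defined unary labelling $\varphi_{\str{C}^+}$. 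By definition of free amalgamation, every tuple of every non-order relation of $\str{C}^+$ lies entirely inside $\beta_1(\str{B}_1^+)$ or entirely inside $\beta_2(\str{B}_2^+)$; hence its image under $\varphi_{\str{C}^+}$ lies in the corresponding relation of $\str{H}$. Extending $\rel{C^+}{\leq}$ to any linear order thus yields an element of $\mathcal{K}^+$, and the same argument shows that any ordered free amalgamation performed in the lift stays in the class.

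By Corollary~\ref{cor:freeamalg}, $\mathcal{K}^+$ is a Ramsey class, and the construction added exactly $|H|$ unary relations, establishing the corollary. I do not anticipate a real obstacle: the only point requiring a moment's thought is that the freeness of the amalgamation is what guarantees no ``mixed'' tuple is ever created, which is precisely what lets us read off a homomorphism to $\str{H}$ from the amalgamated unary labels. If one additionally wanted the lift property with respect to the unlifted class, a subsequent application of Proposition~\ref{prop:admisible} with a suitable $\leq_{L^+}$ ordering the atoms of $\mathcal{O}_{L^+}$ by their $\rel{}{U_h}$-label would suffice, but this is not required for the statement.
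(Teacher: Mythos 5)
Your proposal reproduces the paper's own argument: lift by unary relations recording a homomorphism to $\str{H}$, observe that the lifted class is a free amalgamation class, and apply Corollary~\ref{cor:freeamalg}. You merely make explicit the amalgamation verification that the paper dismisses as ``easy to see,'' and your closing comment about admissible orderings parallels the paper's remark following the corollary.
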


\begin{proof}
Given $\str{H}$, we  lift the language by unary relations $\ext{}{i}$, $i\in H$ (those are special cases of relations used in Section~\ref{sec:Flifts} and thus we use letter $\ext{}{}$).
For a finite ordered structure $\str{A}\in \CSP(\str{H})$ we construct a lift $\str{A}^+$ by choosing a homomorphism $c\colon\str{A}\to \str{H}$ arbitrarily and putting $(v)\in \ext{A}{i}$ if and only if $c(v)=i$. (Our lifts explicitly fix the homomorphism to $\str{H}$.) It is easy to see that the lifted class is a free amalgamation class and the Ramsey property follows by Corollary~\ref{cor:freeamalg}.
\end{proof}
It is easy to check that the described lift has the lift property with admissible orderings
  whenever every homomorphism $\str{H}\to\str{H}$ is an automorphism (\ie{}\ $\str{H}$ is a core~\cite{Hell2004}).

In some special cases it is possible, for a given $\F$, construct a finite structure $\str{H}$ such that $\Forbh(\F)=\CSP(\str{H})$.
In such situations $\str{H}$ is called the \emph{homomorphism dual} of $\F$.
All homomorphism dualities have been characterised in~\cite{Nesetril2000} and~\cite{Erdos2012}, see also~\cite{Kun2008}.
In the context of universal structures, this can be further generalised to the notion of
\emph{monadic lifts} (\ie{}\ homogenising lifts which add only finitely many unary relations). Classes $\Forbh(\F)$ with
monadic lift are discussed in~\cite{Hubicka2009}: even if there is no homomorphism dual, 
every monadic homogenising lift (see Section~\ref{sec:homogenization}) is an amalgamation class of ordered structures with amalgamation which is free in all relations except for $\leq$ and thus Theorem~\ref{thm:NR} can still be applied.  

\begin{corollary}
\label{cor:duals}
Let $\F$ be a regular family of finite connected weakly ordered structures such that all minimal separating cuts consist of one vertex.
Then there exists a Ramsey lift of $\Forbh(\F)$ adding only finitely many unary relations.
\end{corollary}
\begin{proof} This follows as a combination of Corollary~\ref{cor:Hcolor} with~\cite{Hubicka2009} (as indicated above).
\end{proof}

Analogous proofs also give corresponding corollaries for homomorphism-embeddings:
\begin{corollary}
Let $\str{H}$ be a finite ordered structure. Denote by $\CSP_{\mathrm {he}}(\str{H})$ the class
of all finite ordered structures with a homomorphism-embedding to $\str{H}$. Then the class
$\CSP_{\mathrm {he}}(\str{H})$ has a unary Ramsey lift adding only $\vert H\vert $ unary relations.
\end{corollary}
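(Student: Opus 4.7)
The plan is to mirror the proof of Corollary~\ref{cor:Hcolor} verbatim, simply replacing ``homomorphism'' by ``homomorphism-embedding'' and verifying that the one non-trivial step still goes through. Specifically, I would extend the language $L$ of $\str{H}$ by $|H|$ new unary relation symbols $\rel{}{v}$, one for each $v\in H$. Given a finite ordered $L$-structure $\str{A}\in\CSP_{\mathrm{he}}(\str{H})$, fix an arbitrary homomorphism-embedding $c\colon\str{A}\to\str{H}$ and define a lift $\str{A}^+$ by putting $(x)\in\rel{A^+}{v}$ iff $c(x)=v$. Let $\K^+$ be the class of all such lifts $\str{A}^+$ obtained in this way; then $\sh(\K^+)=\CSP_{\mathrm{he}}(\str{H})$ and the lift is precompact, adding exactly $|H|$ unary relations.

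The task reduces to showing that $\K^+$ is an amalgamation class of ordered $L^+$-structures with \emph{ordered free amalgamation}, so that Corollary~\ref{cor:freeamalg} applies directly. Hereditarity and joint embedding are immediate from the definition. For the amalgamation step, suppose $\str{B}_1^+, \str{B}_2^+\in\K^+$ share a common sub-lift $\str{A}^+$. Then the encoded maps $c_1\colon\str{B}_1\to\str{H}$ and $c_2\colon\str{B}_2\to\str{H}$ agree on $A$ (because the unary relations $\rel{}{v}$ are shared), so they glue to a single map $c$ on the vertex set of the ordered free amalgamation $\str{C}$ of $\str{B}_1$ and $\str{B}_2$ over $\str{A}$. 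Since every tuple of $\str{C}$ lies entirely inside $\str{B}_1$ or entirely inside $\str{B}_2$, the map $c$ is a homomorphism to $\str{H}$.

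The one point requiring a short verification, and hence the ``main obstacle'', is that $c$ is actually a homomorphism-embedding. I would argue as follows. Let $\str{I}$ be any irreducible substructure of $\str{C}$; I claim $I\subseteq B_1$ or $I\subseteq B_2$. Indeed, if $x\in B_1\setminus A$ and $y\in B_2\setminus A$ both lay in $I$, then by irreducibility of $\str{I}$ some tuple of some relation of $\str{I}$ would contain both $x$ and $y$; but the same tuple would lie in $\str{C}$, contradicting the defining property of free amalgamation (no tuple spans both sides). Hence $\str{I}$ is contained in one of $\str{B}_1, \str{B}_2$, where $c_1$ or $c_2$ is already assumed to restrict to an embedding into $\str{H}$. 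Thus $c$ is a homomorphism-embedding and $\str{C}\in\CSP_{\mathrm{he}}(\str{H})$; the corresponding lift is the desired amalgam in $\K^+$.

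Having verified that $\K^+$ is an ordered free amalgamation class, Corollary~\ref{cor:freeamalg} (equivalently, Theorem~\ref{thm:NR} applied to the family of minimal obstructions to lying in $\K^+$, all of which are ordered irreducible for the free-amalgamation reason above) yields that $\K^+$ is Ramsey. As a side remark, the same admissible-reordering trick from Proposition~\ref{prop:admisible} combined with the fact that $\str{H}$ is finite shows that when $\str{H}$ is a homomorphism-embedding core the lift has the lift property with respect to $\CSP_{\mathrm{he}}(\str{H})$; this is not needed for the statement but clarifies the picture.
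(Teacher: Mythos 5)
Your overall plan mirrors the paper's (which deliberately omits the details: ``it is easy to show that the lift fixing a homomorphism embedding to $\str{H}$ leads to a free amalgamation class''), so the architecture---unary lift encoding the map, ordered free amalgamation, Corollary~\ref{cor:freeamalg}---is the right one. However, there is a concrete gap in the verification that $\K^+$ has \emph{ordered} free amalgamation. Corollary~\ref{cor:freeamalg} requires that every completion of $\rel{C}{\leq}$ to a linear order lands in $\K^+$, and in such a completion there \emph{are} tuples of $\rel{}{\leq}$ spanning $B_1\setminus A$ and $B_2\setminus A$. Your claim that ``every tuple of $\str{C}$ lies entirely inside $\str{B}_1$ or entirely inside $\str{B}_2$'' is therefore false; with the paper's Definition~\ref{def:irreducible} (which counts $\rel{}{\leq}$), the whole of $\str{C}$ is irreducible once the order is completed, so $c$ would need to be an embedding of all of $\str{C}$ into $\str{H}$, which it plainly is not (it is usually not even injective). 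The same issue already shows that, read literally, a homomorphism-embedding from an ordered $\str{A}$ into $\str{H}$ is just an embedding---ordered structures are irreducible---so $\CSP_{\mathrm{he}}(\str{H})$ would degenerate to $\Age(\str{H})$.

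The repair, which you gesture at only in your closing parenthetical about ``ordered irreducible'' obstacles, is to treat the order relation separately throughout: the map $c$ encoded by the lift is a homomorphism-embedding with respect to the relations of $L$ \emph{other than} $\rel{}{\leq}$ (equivalently, one works with ``irreducible without order'' as defined just before Theorem~\ref{thm:NR}). With that convention, free amalgamation is genuinely free in all non-order relations, so irreducible-without-order substructures of $\str{C}$ do fall entirely into one of $\str{B}_1$ or $\str{B}_2$, $c$ restricted to them is an embedding, the completion of $\rel{C}{\leq}$ is unconstrained by $c$, and the minimal obstacles are ordered irreducible in the sense Theorem~\ref{thm:NR} requires. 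You should state this convention explicitly before the amalgamation step rather than leaving it implicit; without it the key sentence of your argument is simply wrong.
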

\begin{proof}
Again it is easy to show that the lift fixing a homomorphism embedding to $\str{H}$ leads to a free amalgamation class.
\end{proof}
The following corollary represents the special (and easy) case of Theorem~\ref{thm:main}:
\begin{corollary}
Let $\F$ be a regular family of finite connected weakly ordered structures such that all minimal separating cuts consist of one vertex.
Then there exists a Ramsey lift of $\Forb(\F)$ adding only finitely many unary relations.
\end{corollary}
\begin{proof} This follows by a combination of Corollary~\ref{cor:freeamalg} and Theorem~\ref{mainthm}.
\end{proof}

Structures with minimal separating cuts (see Definition~\ref{def:separating}) of size one generalise graph trees~\cite{Nesetril2000}: every such structure can be constructed from a graph tree by replacing edges by arbitrary ordered irreducible structures (or, in other words,
every two-connected component of its Gaifman graph is a complete
graph).  We know, by Theorem~\ref{mainthm2}, that regularity (Definition~\ref{def:regular}) is a necessary condition
for the existence of an $\omega$-categorical universal structure in $\Forb(\F)$.

It may seem that by considering monadic lifts  we exhausted all
possible applications of Theorem~\ref{thm:NR}.  Yet there is another case:
\cite{Nevsetvril1984} gives an example of an application of Theorem~\ref{thm:NR} 
which uses order to give a Ramsey lift of the class of all finite directed acyclic graphs.
 Because cycles are not irreducible structures, it is necessary to use other
 means to describe the acyclicity.  Instead of forbidding directed cycles, we
 (dually) use the fact that every directed acyclic graph has a linear extension. Finite directed acyclic graphs with linear extensions form a class with the ordered free amalgamation property and we thus immediately obtain:
\begin{corollary}[\cite{Nevsetvril1984}]
\label{cor:acyclic}
The class $\ordclass{\mathcal A}$ of all finite directed acyclic graphs with a linear extension is a Ramsey class.
\end{corollary}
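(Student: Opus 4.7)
The plan is to apply Corollary~\ref{cor:freeamalg} to the language $L=\{\rel{}{E},\rel{}{\leq}\}$ of two binary symbols, where $\rel{}{E}$ is interpreted as the edge relation of a directed acyclic graph and $\rel{}{\leq}$ as a linear order which is additionally required to be a linear extension of $\rel{}{E}$. Hereditarity is immediate: restrictions of DAGs are DAGs, and restrictions of linear extensions remain linear extensions of the restricted edge relation. So the whole burden is to verify that $\overrightarrow{\mathcal A}$ has ordered free amalgamation.

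Given $\str{A},\str{B}_1,\str{B}_2\in \overrightarrow{\mathcal A}$ with $\str{A}$ a common substructure, I would form the free amalgamation $\str{C}$ on $C=B_1\cup B_2$ with edge relation $\rel{C}{E}=\rel{B_1}{E}\cup\rel{B_2}{E}$ and partial order relation $\rel{C}{\leq}=\rel{B_1}{\leq}\cup\rel{B_2}{\leq}$, and then check that any completion of $\rel{C}{\leq}$ to a linear order produces a structure in $\overrightarrow{\mathcal A}$. First, $\rel{C}{E}$ is still acyclic, since any directed cycle in it would have to contain an edge with one endpoint in $B_1\setminus A$ and one in $B_2\setminus A$, yet no such edges exist in the free amalgamation. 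Next, and this is the crux, I would show that the transitive closure of $\rel{C}{\leq}$ is antisymmetric: a putative directed cycle would have to alternate between maximal runs of steps in $\rel{B_1}{\leq}$ and $\rel{B_2}{\leq}$ with all turning points in $A$; concatenating each run into a single step and restricting to the turning points yields a cycle inside $\rel{A}{\leq}$, contradicting that $\rel{A}{\leq}$ is a linear order. Hence the transitive closure of $\rel{C}{\leq}$ is a partial order, and any linear extension of it is a linear order on $C$ that contains $\rel{C}{E}$, so the completed structure lies in $\overrightarrow{\mathcal A}$.

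Joint embedding is the instance of amalgamation over the empty structure (which is in $\overrightarrow{\mathcal A}$). With the amalgamation axioms in hand and ordered free amalgamation verified, Corollary~\ref{cor:freeamalg} immediately yields that $\overrightarrow{\mathcal A}$ is a Ramsey class. The only conceptual obstacle is the cycle-contraction argument showing antisymmetry of the transitive closure of the union of the two linear orders; this is the single place where compatibility of the two linear extensions on $\str{A}$ is critical, and the rest of the verification is bookkeeping.
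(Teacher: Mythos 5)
Your proof is correct and follows the same route as the paper, which derives the corollary from Corollary~\ref{cor:freeamalg} after merely asserting that finite acyclic graphs with a linear extension form a class with ordered free amalgamation. You supply the routine verification the paper leaves implicit, including the cycle-contraction argument showing that the transitive closure of $\rel{B_1}{\leq}\cup\rel{B_2}{\leq}$ is antisymmetric (so a linear completion exists and any such completion is again a linear extension of the amalgamated edge relation).
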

 One can verify the lift property and show that every Ramsey lift of the
 class of directed acyclic graphs always fixes a linear extension. This
shows that this technical looking trick (of adding a linear extension) is in fact necessary. 
The infinite linear order may also be seen as an infinite dual of the class of all directed acyclic graphs.

\subsection {Ramsey classes with strong amalgamation}
\label{sec:examplesstrong}
In this section we focus on strong amalgamation classes of relational structures which are Ramsey by an application of Theorem~\ref{thm:mainstrong}.
Recall that Theorem~\ref{thm:mainstrong} states that local finiteness  is
essentially the only condition which prevents us from showing the Ramsey
property of every strong amalgamation class of ordered structures.

\subsubsection {Partial orders with linear extension}
\label{sec:posets}
We start with an example of a Ramsey class with non-trivial local finiteness.
This serves as a warm-up example introducing all necessary concepts.

Let $L_P$ be the language with two binary relations $\sqsubseteq$ and $\leq$.
We consider partial orders $(A,\sqsubseteq_A)$ with a fixed linear extension denoted by $\leq_A$. The class of such finite $L_P$-structures $\str{A}$ will be denoted  by $\ordclass{\mathcal P}$.
By Theorem~\ref{thm:NR} we know that the class $\ordclass{\Str}(L_P)$ (that is, the class of all finite $L_P$-structures $\str{A}$ where $\leq_A$ is a linear order of vertices) is Ramsey.
We aim to prove that $\ordclass{\mathcal P}$ is a locally finite subclass of $\ordclass{\Str}(L_P)$.
By an application of Theorem~\ref{thm:mainstrong} we then obtain that $\ordclass{\mathcal P}$ is Ramsey.

\medskip

Recall the notions of irreducible structures (Definition~\ref{def:irreducible}), homomorphism-embedding (Definition~\ref{def:homembed}) and completion (Definition~\ref{defn:completion}).
As in the definition of locally finite subclass (Definition~\ref{def:localfinite}), let $\str{C}_0\in
\ordclass{\Str}(L_P)$ be fixed and consider an $L_P$-structure $\str{C}$ such that:
\begin{enumerate}
\item $\str{C}_0$ is a $\ordclass{\Str}(L_P)$-completion of $\str{C}$ (equivalently, there exists a homo\-mor\-phism-embedding $f\colon\str{C}\to\str{C}_0$), and
\item every irreducible substructure of $\str{C}$ is in $\ordclass{\Str}(L_P)$.
\end{enumerate}
When does $\str{C}$ have no $\ordclass{\mathcal P}$-completion?

First observe that since every irreducible substructure of $\str{C}$ is in $\ordclass{\mathcal
P}$ we get that for every $v\in C$ it holds that $v \leq_\str{C} v$ and
$v \sqsubseteq_\str{C} v$. We also get that $u \sqsubseteq_\str{C} u'$ implies
$u \leq_\str{C} u'$ for every $u,u'\in C$ and thus relation $\sqsubseteq_\str{C}$ is a subset of $\leq_\str{C}$.

Because $\str{C}$ is not necessarily an ordered structure, $\leq_\str{C}$ may not be a linear order (for example, it is not necessarily transitive).
However, since $\leq_{\str{C}_0}$ is a linear order and $f$ is a
homomorphism-embedding we know that $\leq_\str{C}$ is an acyclic graph extended by loops on all vertices. Thus there is a linear order $\leq'_\str{C}$ of $C$ such that $\leq_\str{C}$ (and consequently also $\sqsubseteq_\str{C}$) is a subset of $\leq'_\str{C}$. 

Let $\str{C}'$ be a structure with vertex set $C$ where $\leq_{\str{C}'}$ is
an arbitrary completion of $\leq_\str{C}$ to a linear order and $\sqsubseteq_{\str{C}'}$
is the transitive closure of $\sqsubseteq_{\str{C}}$.
From the discussion above we know that
$\str{C}'\in \ordclass{\mathcal P}$. The identity on $C$ is a homomorphism $h\colon\str{C}\to\str{C}'$.
However, $h$ is not necessarily a homomorphism-embedding (it may not be an embedding on irreducible substructures) and thus
$\str{C}'$ is not necessarily a completion of $\str{C}$.

If $h$ is not a homomorphism-embedding $\str{C}\to\str{C}'$, $L_P$ being binary implies that there exists a pair of vertices $u, v\in C$ such that $\str{C}$ induces an irreducible
substructure $\str{D}$ on $\{u,v\}$ and $h$ restricted to $\str{D}$ is not embedding.  Because we know that $\str{D}\in \ordclass{\mathcal P}$ it follows that there is only one such $\str D$: $u \leq_\str{C} v$, $u \not\sqsubseteq_\str{C} v$ and $u \leq_{\str{C}'}$, $u \sqsubseteq_{\str{C}'} v$ in $\str{C}'$.
In this case $u \sqsubseteq_{\str{C}'} v$ is in the transitive closure of $\str{C}$ and thus there is a 
sequence $u=u_1,u_2,\ldots, u_\ell=v$ of distinct vertices of $\str{C}$ such that
\begin{enumerate}
\item $u_i\sqsubseteq_\str{C} u_{i+1}$ for every $1\leq i<\ell$ (and consequently also $u_i\leq_\str{C} u_{i+1}$),
\item $u \not\sqsubseteq_\str{C} v$, and
\item $u \leq_\str C v$.
\end{enumerate} We call such a structure a \emph{quasi-cycle}.

Clearly no quasi-cycle has a $\ordclass{\mathcal P}$-completion and thus they
are the only obstacles. We thus conclude that:
\begin{claim}
$\str{C}$ satisfying our assumptions (given by Definition~\ref{def:localfinite} of a locally finite subclass) has a $\ordclass{\mathcal P}$-completion if and only if it contains no quasi-cycle.
\end{claim}
Local finiteness is usually shown by giving an upper bound on the size of such an obstacle.
Here it follows from the observation
that $f$ has to be injective on every quasi-cycle because 
$u_i \leq_C u_{i+1}$ for every $1\leq i< \ell$ and $\leq_{C_0}$ is a linear order. We get that $\ell\leq |C_0|$.

Putting $n(\str{C}_0)=|C_0|$, we can make sure that all structures $\str{C}$ considered by the definition
of locally finite subclass contains no quasi-cycle.
We have thus verified that $\ordclass{\mathcal P}$ is a locally finite subclass of
$\ordclass{\Str}(L_P)$ and thereby proved the following theorem.

\begin{theorem}[\cite{Nevsetvril1984,Trotter1985}]
\label{thm:posets}
The class $\ordclass{\mathcal P}$ of all finite partial orders with a linear extension is Ramsey.
\end{theorem}
Another way to prove Theorem~\ref{thm:posets}  is to deduce it from Theorem~\ref{cor:acyclic}
(\ie{}\ use $\ordclass{\mathcal A}$ instead of $\ordclass{\Str}(L_P)$ as the base Ramsey class).
The analysis of obstacles in this setting is almost equivalent to the one above.

Many applications of Theorem~\ref{thm:mainstrong} follow the scheme described here:
\begin{enumerate}
\item First it is important to understand the completion procedure for the given class.
\item Based on the analysis of the completion procedure one can derive the class of minimal obstacles
\item Finally the bound on the size of obstacles is given. 
\end{enumerate}
A useful tool in this analysis is also Proposition~\ref{prop:strongcompletion}
which shows that instead of completions we can in many cases study strong completions only which
are easier to understand.

\begin{remark}
Note that for local finiteness it is critical to use the linear extension.
In fact, the class of all finite partial orders with a free linear order is not Ramsey~\cite{Fouche1997} and it is possible
to verify the lift property of $\ordclass{\mathcal P}$ and thus show that the
linear extension is necessary.
\end{remark}
\subsubsection {$S$-metric spaces with no jumps}
\label{sec:metric}
In this section we strengthen results of~\cite{Nevsetvril2007} which establishes the Ramsey
property of the class of ordered finite rational metric spaces with a free ordering on vertices, \cite{Dellamonica2012} which establishes the Ramsey property of the class of finite ordered graphs with a free ordering of vertices with respect to metric embeddings, and~\cite{The2010} which studies Ramsey expansions for special choices of $S$.
Using the results of~\cite{Sauer2013}, we characterise, in a surprisingly simple way, Ramsey classes of ordered metric spaces which only use distances from a given closed set $S$ (in Theorem~\ref{thm:Smetric} and Corollary~\ref{cor:Smetricfin} proved in Section~\ref{sec:metric2}).

In this section,
we start by recalling the basic properties of $S$-metric spaces. It appears that it is useful to consider two main types of a distance set $S$: without jumps (defined in Definition~\ref{defn:jump} and treated in this section) and with jumps (treated in Section~\ref{sec:metric2} by means of closures).

\begin{definition}
\label{defn:smetric}
Given $S\subseteq \mathbb R_{>0}$ (that is, a subset of positive reals) an \emph{$S$-metric
space $\str{A}$} is a pair $(A,d_\str{A})$ where $A$ is the vertex set and $d$ is
a binary function $d_\str{A}\colon A^2\to S\cup \{0\}$ (the \emph{distance function}) such that:
\begin{enumerate}
 \item $d_\str{A}(u,v)=0$ if and only if $u=v$,
 \item $d_\str{A}(u,v)=d_\str{A}(v,u)$, and
 \item $d_\str{A}(u,w)\leq d_\str{A}(u,v)+d_\str{A}(v,w)$ for every $w\in A$ (the \emph{triangle inequality}).
\end{enumerate}
We denote by $\mathcal M_S$ the class of all finite $S$-metric spaces.
\end{definition}

\begin{definition}
\label{defn:lmetric}
We interpret an $S$-metric space as a
relational structure $\str{A}$ in the language $L_S$ with (possibly infinitely many) binary relations $\rel{}{s}$, $s\in S$, where
we put, for every $u\neq v\in A$, $(u,v)\in \rel{A}{\ell}$ if and only if $d(u,v)=\ell$. We do not explicitly represent that $d(u,u)=0$ (\ie{}\ no loops are added).

Every $L_S$-structure where all relations are symmetric and irreflexive and every pair of vertices is in at most one relation is called an \emph{$S$-graph}, which we may alternatively view as a graph with edges labelled by $S$.
Every non-induced substructure of an $S$-metric
space is an \emph{$S$-metric graph} ($S$-metric graphs are structures with have
a strong completion to $S$-metric space in the sense of Definition~\ref{defn:completion}).
Every $S$-graph that is not an $S$-metric graph is a \emph{non-$S$-metric graph}.
\end{definition}

  Not every choice of
$S$ leads to an amalgamation class $\mathcal M_S$ (see
Figure~\ref{fig:nonmetric}).  Those that do satisfy the following condition:
\begin{figure}
\centering
\includegraphics{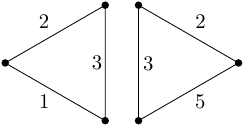}
\caption{$\{1,2,3,5\}$-metric spaces do not have the amalgamation property.}
\label{fig:nonmetric}
\end{figure}

\begin{definition}[\cite{Delhomme2007}]
A subset $S\subseteq \mathbb R_{>0}$ satisfies the \emph{4-values condition}, if
for every $a,b,c,d\in S$ such that there exists $x\in S$ such that triangles with distances $a$--$b$--$x$ and $c$--$d$--$x$ satisfy
the triangle inequality there exists $y\in S$ such that triangles with distances $a$--$c$--$y$ and
$b$--$d$--$y$ satisfy the triangle inequality.
\end{definition}

\begin{figure}
\centering
\includegraphics{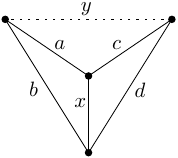}
\caption{The 4-values condition.}
\label{fig:4values}
\end{figure}
The 4-values condition describes a strong amalgamation of two 3-point metric
spaces over a common 2-point subspace, see Figure~\ref{fig:4values}. Clearly, this is a necessary condition for the amalgamation
property of $\mathcal{M}_S$. In fact, one can prove that under certain conditions (for closed sets $S$), this is also a sufficient condition (see Theorem~\ref{thm:sauercharacterisation}).

The universal ultrahomogeneous metric space was constructed by Urysohn \cite{Urysohn1927, Katetov1986} (by a \Fraisse{}-type construction, predating \Fraisse{} by more than 20 years).
Following~\cite{Sauer2013} we say that
an $S$-metric space $\str{U}$ is an \emph{$S$-Urysohn metric space} if it is homogeneous, separable, complete and if it isometrically embeds every separable $S$-metric space $\str{A}$.
 Because sets $S$ may be uncountable (and
Theorem~\ref{fraissethm} cannot be directly applied), the existence of a Urysohn
$S$-metric space requires $S\cup \{0\}$ to be closed and to have $0$ as a limit point:
\begin{theorem}[\cite{Sauer2013}]
\label{thm:sauercharacterisation}
Let $S\subseteq \mathbb R_{>0}$ be a set with 0 as a limit of $S\cup \{0\}$. Then there 
exists a Urysohn $S$-metric space if and only if $S\cup \{0\}$ is
a closed subset of $\mathbb R$ satisfying the 4-values condition.

Let $S\subseteq \mathbb R_{>0}$ which does not have 0 as a limit of $S\cup
\{0\}$.  Then there exists a Urysohn $S$-metric space if and only
if $S$ is a countable subset of $\mathbb R$ satisfying the 4-values
condition.

Any two Urysohn metric spaces having the same set of distance
$S$ are isometric.
\end{theorem}

We state the observations about strong amalgamation as follows (and for completeness we include their proofs):
\begin{corollary}[\cite{Sauer2013}]
\label{cor:sauer2}
Let $S\subseteq \mathbb R_{>0}$ be a subset of positive reals. $S$ satisfies the 4-values condition if and only if $\mathcal M_S$ has the strong amalgamation property.
\end{corollary}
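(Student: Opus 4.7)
The plan is to prove the two implications separately, with the reverse direction being the substantive one.

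For the forward direction ($\mathcal{M}_S$ strongly amalgamates $\Rightarrow$ $S$ satisfies the 4-values condition), I would directly realize the 4-values condition as a single instance of strong amalgamation of two triangles over a common edge. Given $a,b,c,d\in S$ and a witness $x\in S$ such that $(a,b,x)$ and $(c,d,x)$ are metric triples, I set $\str{A}$ to be the 2-point $S$-metric space with distance $x$, and let $\str{B}_1$ (respectively $\str{B}_2$) be the 3-point $S$-metric space obtained by adding one vertex $p$ (respectively $q$) at distances $a,b$ (respectively $c,d$) to the two points of $\str{A}$. The triangle inequalities in $\str{B}_i$ are satisfied by hypothesis, so $\str{B}_1,\str{B}_2\in \mathcal M_S$. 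A strong amalgam $\str{C}\in\mathcal M_S$ yields a single new distance $y=d_{\str{C}}(p,q)\in S$, and the triangles $p\,q\,u_1$ and $p\,q\,u_2$ in $\str{C}$ are exactly the triangles $(a,c,y)$ and $(b,d,y)$ required by the 4-values condition.

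For the reverse direction ($S$ satisfies the 4-values condition $\Rightarrow$ $\mathcal M_S$ strongly amalgamates), I would construct a strong amalgam $\str{C}$ of $\str{B}_1$ and $\str{B}_2$ over $\str{A}$ with vertex set $B_1\cup_A B_2$ by iteratively assigning distances $d(p,q)\in S$ for pairs $(p,q)$ with $p\in B_1\setminus A$ and $q\in B_2\setminus A$. The key base case is $|A|=2$, $|B_1\setminus A|=|B_2\setminus A|=1$, which is precisely the 4-values condition and produces the first admissible distance. The inductive step enumerates pairs $(p,q)$ and, at each stage, chooses a valid $y\in S$ respecting the triangle inequality with every already present third vertex. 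To justify the existence of such a $y$ one shows (by induction on $|A|$) that the set
\[
\Sigma(p,q)\;=\;\bigl\{\,y\in S : \text{triangle } (d(p,a),d(q,a),y) \text{ is metric for every } a\in A\,\bigr\}
\]
is non-empty; at each step this reduces, via repeated application of the 4-values condition over successive pairs of $A$-vertices, to the existence of a common witness $y$ to several pairwise-compatible constraints.

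The main obstacle is exactly this last point: the 4-values condition produces a witness only for \emph{one} edge at a time, whereas the amalgamation requires a single distance compatible with \emph{all} vertices of $A$ simultaneously. Overcoming this requires the iteration argument due to Sauer~\cite{Sauer2013}, which can be phrased as showing that the admissible interval $\Sigma(p,q)$ is preserved (and non-empty) under successive intersections with metric-triangle constraints, using the 4-values condition to combine two constraints into one at each step. The detailed case analysis for when $S$ is dense or has limits, and the verification that the iterative construction closes up to a genuine $S$-metric space, is carried out in~\cite{Sauer2013}, from which the corollary is essentially a direct reading.
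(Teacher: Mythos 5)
Your forward direction (strong amalgamation implies the 4-values condition) coincides with the argument the paper has in mind: realize the 4-values data as a one-step strong amalgamation of two triangles over a common edge, and read off the required witness $y$ from the distance $d_{\str{C}}(p,q)$. That part is correct.

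For the converse, your sketch takes a genuinely different route from the paper's proof and, as written, has a gap. You correctly reduce (by building the amalgam one vertex at a time) to the case $|B_1\setminus A|=|B_2\setminus A|=1$ with $A$ arbitrary, but then you propose to resolve the many triangle constraints at $(u,v)$ coming from different $w\in A$ ``via repeated application of the 4-values condition over successive pairs of $A$-vertices.'' The 4-values condition does produce, for each pair $w,w'\in A$, a single $y\in S$ lying in the intersection of the two corresponding triangle intervals; but after a first application the combined constraint is no longer of the form ``triangle $a$--$b$--$x$ is metric for some $x\in S$,'' so you cannot feed it back into the 4-values condition to absorb a third vertex. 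The iteration does not obviously close up, and you defer exactly this point to~\cite{Sauer2013} instead of proving it.

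The paper sidesteps the iteration with a single application. With $B_1\setminus A=\{u\}$ and $B_2\setminus A=\{v\}$, choose $w\in A$ minimizing $a=d_{\str{B}_1}(u,w)+d_{\str{B}_2}(w,v)$ (the tightest upper bound on the new distance) and $w'\in A$ maximizing $b=|d_{\str{B}_1}(u,w')-d_{\str{B}_2}(w',v)|$ (the tightest lower bound). If $w\neq w'$, apply the 4-values condition once to the quadruple $\bigl(d(u,w),\,d(u,w'),\,d(v,w),\,d(v,w')\bigr)$ with witness $x=d(w,w')$: the triangles $d(u,w)$--$d(u,w')$--$x$ and $d(v,w)$--$d(v,w')$--$x$ are metric because they live in $\str{B}_1$ and $\str{B}_2$ respectively, and the resulting $y\in S$ satisfies the two \emph{extremal} constraints coming from $w$ and $w'$, hence automatically every other constraint $I_{w''}$. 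If $w=w'$, set $y=\max\{d_{\str{B}_1}(u,w),d_{\str{B}_2}(w,v)\}\in S$. Replacing your unfinished iteration by this extremal-pair observation makes the corollary a self-contained one-paragraph consequence of the 4-values condition, which is what the paper intends.
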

\begin{proof}
We show that for every $S$ satisfying the 4-values condition the class $\mathcal M_S$ has strong amalgamation.
Let $\str{A},\str{B}_1,\str{B}_2\in \mathcal M_S$ such that the identity is
an embedding of $\str{A}$ to both $\str{B}_1$ and $\str{B}_2$. We will construct
a strong amalgamation of $\str{B}_1$ and $\str{B}_2$  over $\str{A}$.
Because $\mathcal M_S$ is hereditary, we can assume without loss of generality
that $B_1\setminus A=\{u\}$ and $B_2\setminus A=\{v\}$. Let $w\in A$ be a vertex such
that $a=d_{\str{B}_1}(u,w) + d_{\str{B}_2}(w,v)$ is minimised and let $w'\in A$ be a vertex
such that $b=\vert d_{\str{B}_1}(u,w')-d_{\str{B}_2}(w',v)\vert $ is maximised. By the triangle-inequality, $a$ is an upper
bound on the distance of $u$ and $v$ while $b$ is a lower bound.
If $w\neq w'$, the  4-values condition says precisely that there is $s\in S$ such that we can put $u$ and $v$ to be in distance $s$. If
$w=w'$ then we put $u$ and $v$ to be in distance $\max \{d_{\str{B}_1}(u,w), d_{\str{B}_2}(w,v)\}\in S$.

If $S$ does not satisfy the 4-values condition then we know that there are already two triangles which do not amalgamate over a common edge.
\end{proof}
For certain sets $S$, the 4-values condition can be expressed in the following neat algebraic way due to Sauer~\cite{Sauer2013b}.
For $a,b\in S$ denote by $a\oplus_S b=\sup\{x\in S; x\leq a+b\}$.
The algebraic characterisation of sets with the 4-values
condition allows us to easily complete $S$-metric graphs to $S$-metric spaces.
\begin{theorem}
\label{thm:sauer2}
Assume that $S\subseteq \mathbb R_{>0}$ is a subset of positive reals on which the $\oplus_S$ operation is defined.
Then $S$ satisfies the 4-values condition if and only if the operation $\oplus_S$
is associative.
\end{theorem}
Note that Theorem~\ref{thm:sauer2} is proved in~\cite{Sauer2013b} only for those $S$ such that $S\cup\{0\}$ is closed. An important example of a non-closed $S$ where $\oplus_S$ is defined and associative is the set of all positive rational numbers. For this reason we state it in this generalised form which still follows by the same argument.
\begin{proof}
Assume that $S$ satisfies the 4-values condition. Given $a,b,c\in S$ we aim to show that $a\oplus_S (b\oplus_S c)=(a\oplus_S b)\oplus c$.

By the definition of $\oplus_S$ we 
 know that the triangle with distances $a,b,a\oplus_S b$ is metric. Similarly the triangle with distances $c,a\oplus_S b,(a\oplus_S b)\oplus_S c$ is metric.
Now we apply the 4-values condition for $a,b,(a\oplus_S b)\oplus_S c,c$ (we have verified that we can put $x=a\oplus_S b$)
to obtain $y$ such that the triangles with distances $b,c,y$ and $a,(a\oplus_S b)\oplus_S c,y$ respectively are metric.
This implies that $$(a\oplus_S b)\oplus_S c\leq a\oplus_S y\geq a\oplus_S (b\oplus_S c)$$ and by a symmetrical argument we get the other inequality which proves the first implication.

Now assume that $\oplus_S$ is associative and $a,b,c,d\in S$ are chosen such that there exists $x$
such that $a,b,x$ and $x,c,d$ are metric. Put $y=\min (a\oplus_S c,b\oplus_S d)$.
Without loss of generality we can assume that $a\oplus_S c\leq b\oplus_S d$. It remains to verify that the
triangle $b,d,a\oplus_S c$ is metric. Clearly $b+d\geq b\oplus_S d \geq a\oplus_S c$. To see that
$(a\oplus_S c)+b\geq d$ observe that $(a\oplus_S c)\oplus_S b=c\oplus_S (a\oplus_S b)\geq c\oplus x\geq d$.
Analogously we get that $a\oplus_S c+d\geq b$.
\end{proof}

Let $\str{G}$ be an $S$-metric graph and let $\vv{w}=(w_1,w_2,\ldots, w_n)$ be a sequence
of vertices forming a walk in $\str{G}$ (that is, for every $1\leq i<n$ we
have $w_i\neq w_{i+1}$ and their distance is defined in $\str{G}$; \ie{}\ $(w_i, w_{i+1})\in \rel{G}{l}$ for some $l\in S$). The \emph{$S$-length} of $\vv{w}$ is defined as $d_\str{G}(w_1,w_2)\oplus_S d_\str{G}(w_2,w_3)\oplus_S \cdots \oplus_S d_\str{G}(w_{n-1},w_n)$.

The following corollary is a small strengthening some results of~\cite{Sauer2013} and~\cite{Sauer2013b} (considering not only closed sets) which are important for our construction.
\begin{prop}
\label{prop:sauer1}
Assume that $S\subseteq \mathbb R_{>0}$ is a subset of positive reals on which the $\oplus_S$ operation is defined and associative.
Then the following holds.
\begin{enumerate}
\item \label{cor:suaer1:1} Let $\str{G}$ be a finite $S$-metric graph.
Denote by $d'(u,v)$ the minimal $S$-length of a walk from $u$ to $v$ and by
$\vv{W}(u,v)$ the corresponding walk.
Then $\str{G}$ can be completed to an $S$-metric space $\str{A}$ by putting $d_\str{A}(u,v)=d'(u,v)$ for every pair $u\neq v\in G$.
\item \label{cor:suaer1:2} A finite $S$-graph $\str{G}$ is an $S$-metric graph if and only if all of its cycles are $S$-metric.
\end{enumerate}
\end{prop}
In this setting we will call the metric graph corresponding to $(G,d')$ the \emph{shortest path completion} of $\str{G}$.
\begin{proof}
Both statements can be seen as easy consequences of the associativity of $\oplus_S$:

\medskip

$\it \ref{cor:suaer1:1}.$ 
First assume that $\str{G}$ is $S$-metric. We show that the completion
described will give an $S$-metric space.
  First we verify that $d'$ satisfies the triangle inequality. Assume, to the contrary, the existence of
vertices $u,v,w$ such that $d'(u,v)> d'(u,w)+d'(w,v)$. Concatenate the walks
$\vv{W}(u,w)$ and $\vv{W}(w,v)$ to get a walk $\vv{p}$. By associativity of $\oplus_S$ we get that the $S$-length of $\vv{p}$
is $d'(u,w)\oplus_S d'(w,v)\geq d'(u,v)$. It follows that $d'(u,v)\leq d'(u,w)\oplus_S d'(w,v)\leq d'(u,w)+d'(w,v)$ which is a contradiction.

Now it remains to check that $d_\str{G}(u,v)=d'(u,v)$ whenever $d_\str{G}(u,v)$ is defined.
 We show a stronger claim: Let $\str{B}$ be a completion of $\str{G}$ to an $S$-metric space
then $d_\str{B}(u,v)\leq d'(u,v)$ for every $u\neq v\in G$.

We proceed by induction on the length of the $S$-walk $\vv{W}(u,v)$ which we call $n$.  For $n=3$ this follows from the triangle
inequality.  For $n>3$ denote by $(p_1,p_2,\ldots ,p_{n-1},p_n)$ the vertices of $\vv{W}(u,v)$.
By the induction hypothesis we know that $d_\str{B}(u,p_{n-1})\leq d'(u,p_{n-1})$.
The inequality then follows from associativity of $\oplus_S$ and the
triangle inequality. This finishes the proof of~\ref{cor:suaer1:1}.

\medskip

$\it \ref{cor:suaer1:2}.$
Assume that $\str{G}$ is non-$S$-metric.  In this case we have a pair of vertices
$u$ and $v$ with their distance defined such that $d'(u,v) < d_\str{A}(u,v)$. Because $\oplus_S$ is monotone,
it is easy to see that the walk $\vv{W}(u,v)$ can be turned into a path. A non-metric
cycle is then induced on vertices of this path. For the other implication, clearly if a graph is $S$-metric, then in particular all of its cycles are.
\end{proof}

In Example~\ref{example:13} we showed that the class of all finite $\{1,3\}$-metric spaces is not locally finite in the class of all $\{1,3\}$-graphs.
This is an important example. It indicates that ``large gaps'' in the distance set $S$ have to be treated with care. In the rest of this section we consider 
only those sets $S$ where such a scenario does not happen, that is, $a\oplus_S b > \max(a,b)$.
(Sets with gaps will be treated in Section~\ref{sec:metric2}.)
Such sets are characterised by the absence of jump numbers:

\begin{definition}
\label{defn:jump}
Given $S\subseteq \mathbb R_{>0}$ and $a\in S$, we say that $a$ is a \emph{jump
number} if $a$ is not the maximum element of $S$ and there is no $b\in S$ with $a < b \leq 2a$.
\end{definition}
We shall observe below that $\mathcal M_S$ is locally finite for sets $S$ with no jump numbers:

\begin{lemma}
\label{lem:archimedean}
Let $S\subseteq \mathbb R_{>0}$ be a set which has no jump numbers such that $\oplus_S$ is well-defined and associative. Then for every $a, b\in S$ there is an integer $n$ such that $n\times a \geq b$, where $n\times a$ is defined as
$$\underbrace{a\oplus a\oplus \cdots \oplus a}_{n\text{ times}}.$$
\end{lemma}
\begin{proof}
For a contradiction assume the existence of $a,b\in S$ such that $n\times a < b$ for every $n$. Note that $S$ containing no jump numbers implies that $a\oplus_S a = a$ if and only if $a$ is the maximum element of $S$. This means that the sequence $a_1, a_2, \ldots$ defined as $a_i = i\times a$ is strictly increasing and bounded by $b$, hence has a limit which we call $\ell$. There is $m$ such that $\ell-a_m < a$. From the definition of $\oplus_S$ we have that $a_{m+1} = \sup\{s\in S : a_m + a \geq s\}$. It follows that $a_{m+1} \geq \ell$, which is a contradiction. 
\end{proof}

Operations satisfying the condition in Lemma~\ref{lem:archimedean} are often called \emph{archi\-medean}.

\begin{lemma}
\label{lem:nonSmetric}
Let $S\subseteq \mathbb R_{>0}$ be a set which has no jump numbers such that $\oplus_S$ is well-defined and associative. Then for every finite $S'\subseteq S$ there is $n=n(S')$ such that every non-$S$-metric cycle which only uses distances from $S'$
has at most $n$ vertices.
\end{lemma}
\begin{proof}
Put $m=\min(S')$, $M=\max(S')$ and pick $n$ such that $n\times m\geq M$ (such $n$ exists by Lemma~\ref{lem:archimedean}). Let $\str C$ be a non-$S$-metric cycle which uses only distances from $S'$. By Proposition~\ref{prop:sauer1} we know that $\str{C}$
contains a pair of vertices whose distance is longer than the $S$-length of the path connecting them.
Enumerate the vertices of $C$ as $v_1,v_2,\ldots,v_k$
such that $d_\str{C}(v_1,v_k)>d_\str{C}(v_1,v_2)\oplus_S d_\str{C}(v_2,v_3)\oplus_S\cdots \oplus_S d_\str{C}(v_{k-1},v_k)$.

Clearly $M\geq d_\str C(v_1, v_k)$ and also $d_\str{C}(v_i,v_{i+1})\geq m$ for every $1\leq i < k$. Putting this together (and using monotonicity of $\oplus_S$ we get that $M > k\times m$) and from Lemma~\ref{lem:archimedean} it follows that $m < n$, which is what we wanted.
\end{proof}

\begin{corollary}
\label{cor:finite4value}
Let $S\subseteq \mathbb R_{>0}$ be a set which has no jump numbers such that $\oplus_S$ is well-defined and associative. Then the class of all finite $S$-metric
spaces with free, \ie{}\ arbitrary, ordering of vertices, $\ordclasssup{\mathcal M}{S}$, is a
Ramsey class.
\end{corollary}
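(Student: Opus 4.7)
The plan is to apply Theorem~\ref{thm:mainstrong}. For the ambient Ramsey class $\mathcal{R}$ I would take the class of all finite ordered $S$-graphs, viewed as structures in the language $L_S$ enlarged by a binary relation $\rel{}{\leq}$ interpreted as a linear order, where every $\rel{}{s}$ is symmetric and irreflexive and every pair of distinct vertices lies in at most one $\rel{}{s}$. This class is hereditary, each of its members is irreducible thanks to $\rel{}{\leq}$, and it is closed under ordered free amalgamation (a free amalgamation of $S$-graphs is an $S$-graph and the order completes arbitrarily). Corollary~\ref{cor:freeamalg} therefore gives that $\mathcal{R}$ is Ramsey.

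Next I would verify the hypotheses on $\overrightarrow{\mathcal M}_S \subseteq \mathcal{R}$. Heredity is immediate. Strong amalgamation at the metric level is Corollary~\ref{cor:sauer2} (this is exactly where the 4-values condition is used), and the compatible linear orders on the two summands always extend to a linear order on the strong amalgam by a topological sort. So $\overrightarrow{\mathcal M}_S$ is a hereditary strong amalgamation subclass of $\mathcal{R}$.

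The main step is the local finiteness. I would show that $n(\str{C}_0) = |S|+1$ works uniformly in $\str{C}_0 \in \mathcal{R}$. Fix $\str{C} \in \mathcal{R}$ with a homomorphism-embedding to $\str{C}_0$ and suppose every substructure of $\str{C}$ on at most $|S|+1$ vertices admits a strong $\overrightarrow{\mathcal M}_S$-completion. By Corollary~\ref{cor:sauer1}.\ref{cor:suaer1:2} it suffices to prove that every cycle of $\str{C}$ is $S$-metric. If $\str{C}$ contained a non-$S$-metric cycle $\str{Z}$, then Lemma~\ref{lem:nonSmetric} would force $|Z|\leq |S|+1$, whence $\str{Z}$ itself would be a small substructure with no strong $\overrightarrow{\mathcal M}_S$-completion, contradicting the hypothesis. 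Hence every cycle in $\str{C}$ is $S$-metric, so Corollary~\ref{cor:sauer1}.\ref{cor:suaer1:1} delivers a strong $\mathcal M_S$-completion of the shadow of $\str{C}$; since a strong completion is one-to-one on vertices, the linear order already on $\str{C}$ is still a linear order on the completion, giving a strong $\overrightarrow{\mathcal M}_S$-completion. Theorem~\ref{thm:mainstrong} then yields the Ramsey property.

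The only genuine obstacle is recognising the right combination of prior results: Sauer's 4-values machinery (Corollaries~\ref{cor:sauer1} and~\ref{cor:sauer2}) to handle strong amalgamation and to reduce completion to ``no non-$S$-metric cycle'', and the absence of jump numbers via Lemma~\ref{lem:nonSmetric} to cap the length of a potential obstruction. With these ingredients in place, Theorem~\ref{thm:mainstrong} essentially finishes the argument and the remainder is bookkeeping.
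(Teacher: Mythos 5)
Your proof follows the paper's proof precisely: the ambient Ramsey class $\mathcal R$ of ordered $S$-graphs via Corollary~\ref{cor:freeamalg}, strong amalgamation via Corollary~\ref{cor:sauer2}, local finiteness via Corollary~\ref{cor:sauer1} together with Lemma~\ref{lem:nonSmetric} (bounding non-$S$-metric cycles by $\vert S\vert +1$), and then Theorem~\ref{thm:mainstrong}. The paper's version is terser but the ingredients and the high-level route are identical; your elaboration of the order-completion step is a harmless gloss over the weakly-ordered case that the paper also passes over quickly.
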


\begin{proof}
By Proposition~\ref{prop:sauer1} the weakly ordered structure such that every irreducible substructure is in $\ordclasssup{\mathcal M}{S}$ (and thus it is an $S$-graph) has a strong completion in
$\ordclasssup{\mathcal M}{S}$ if and only if all its cycles are $S$-metric.
By Proposition~\ref{prop:strongcompletion} we know that the existence of a strong completion is equivalent to the existence of a completion. Let $\str C_0$ be as in the definition of local finiteness (Definition~\ref{def:localfinite}) and put $S'$ to be the set of distances occurring in $\str C_0$. Clearly, every structure $\str C$ with a homomorphism to $\str C_0$ only contains distances from $S'$. Since, by Lemma~\ref{lem:nonSmetric}, the set of non-$S$-metric cycles using only distances from $S'$ is finite, the statement follow by Theorem~\ref{thm:mainstrong}.
\end{proof}
Note that already this corollary implies~\cite{Nevsetvril2007} and~\cite{Dellamonica2012}.
Before extending our construction to sets $S$ with jump numbers (which we will give in Section~\ref{sec:metric2}) we first need to overcome the problem explained in Example~\ref{example:13}. This will be done by adding functions into the language.

\subsubsection {Ramsey classes with a locally finite interpretation}
\label{sec:imaginaries}
One of key elements of the proofs of Theorems~\ref{thm:mainstrong} and~\ref{thm:mainstrongclosures} is the iterated partite
construction (Lemma~\ref{lem:iteratedpartite2}) where the local finiteness condition
gives a finite bound on the number of iterations.
However, this gives no methods on achieving local finiteness for specific classes.
Many classes are locally finite in a suitable Ramsey class by themselves (such as $S$-metric spaces without jumps). Nonetheless, there are examples of Ramsey classes which are not locally finite in any ``reasonable'' base Ramsey class. Sometimes, one can turn them into locally finite classes by means of a suitable interpretation.

In particular, we will be interested in the following standard (model-theoretic) method of elimination of imaginaries~\cite{Hodges1993,Shelah1978}.

Let $\str{A}$ be a relational structure.  An \emph{equivalence formula} is a
first order formula $\phi(\vv{x},\vv{y})$ which is symmetric and transitive on the set of
all $n$-tuples $\vv{a}$ of vertices of $\str{A}$ where
$\phi(\vv{a},\vv{a})$ holds (the set of such $n$-tuples is called the \emph{domain} of
the equivalence formula $\phi$). An \emph{imaginary} element $\vv{a}/\phi$ of
$\str{A}$ is an equivalence formula $\phi$ together with a representative
$\vv{a}$ of some equivalence class of $\phi$.

Structure $\str{A}$ \emph{eliminates  imaginary $\vv{a}/\phi$} if there
exists a first order formula $\Phi(\vv{x},\vv{y})$ such that there is a unique tuple $\vv{b}$
such that $\phi(\vv{x}, \vv{a}) \iff \Phi(\vv{x}, \vv{b})$.

\begin{example}
In the Urysohn $\{1,3\}$-metric space $\mathbb U_{\{1,3\}}$  there is an equivalence formula $\phi(x,y)$ which is satisfied for a pair of
vertices if and only if their distance is at most one.  The imaginary element $a/\phi$ then corresponds to the set of vertices in distance at most one from $a$. There is no way to eliminate
these imaginaries.

In order to turn $\mathbb U_{\{1,3\}}$ to a structure eliminating imaginaries defined by 
$\phi$, one can add a unary function $\func{}{}$ to the language 
and lift $\mathbb U_{\{1,3\}}$ to a structure $\mathbb U_{\{1,3\}}^+$ by
choosing precisely one vertex in every equivalence class defined by $\phi$ and putting, for every vertex $v$,
$F(v)$ to be the chosen vertex in the equivalence class of $v$.

Observe that the same formula $\phi$ is not an equivalence formula in the Urysohn $\{1,2,3\}$-metric space.
\end{example}

For a given ordered structure $\str{U}$ we say that $\phi$ is an \emph{equivalence formula on copies of $\str{A}$}
if and only if $\phi$ is an equivalence formula and moreover $\phi(\vv{a},\vv{a})$ holds if and only
if the structure induced by $\str{U}$ on $\vv{a}$ is isomorphic to $\str{A}$ and moreover order of vertices in $\vv{a}$
agrees with the order $\leq_\str{U}$.

\begin{prop}
\label{prop:imaginary}Let $\mathcal K$ be a hereditary Ramsey class of ordered $L$-structures, $\str{U}$ its \Fraisse{} limit,
$\str{A}$ be a finite substructure of $\str{U}$ and $\phi$ an equivalence formula on copies of $\str{A}$.
Then $\phi$ has either one or infinitely many equivalence classes.
\end{prop}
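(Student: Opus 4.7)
I would argue by contradiction: assume $\phi$ has $k$ equivalence classes with $2\le k<\infty$, and use the Ramsey property to force two $\phi$-inequivalent tuples inside an isomorphic copy of a structure whose copies of $\str A$ must lie in distinct $\phi$-classes, contradicting ultrahomogeneity.

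First I would set up the basic objects. Since $\mathcal K$ is a hereditary Ramsey class (with joint embedding property inherited from the presence of its \Fraiss\'e limit $\str U$), Proposition~\ref{prop:ramseyhomo} makes $\mathcal K$ an amalgamation class and $\Age(\str U)=\mathcal K$. Pick ordered tuples $\vec a_1,\vec a_2$ in the domain of $\phi$ (so each lists, in $\rel{U}{\leq}$-order, the vertices of a substructure of $\str U$ isomorphic to $\str A$) lying in two different $\phi$-equivalence classes. Let $\str B$ be the substructure of $\str U$ induced on the union of the entries of $\vec a_1$ and $\vec a_2$ (or any finite superstructure containing them); then $\str B\in\mathcal K$, and the tuples $\vec a_1,\vec a_2$ give two designated copies of $\str A$ inside $\str B$.

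Next I would invoke the Ramsey property to obtain $\str C\in\mathcal K$ with $\str C\longrightarrow (\str B)^{\str A}_k$. Embed $\str C$ as a substructure of $\str U$. Colour each copy $\widetilde{\str A}\in{\str C\choose \str A}$ by the $\phi$-class of its $\rel{U}{\leq}$-increasing enumeration; this is a legitimate $k$-colouring because $\phi$ partitions exactly these tuples. The Ramsey arrow yields a copy $\widetilde{\str B}\in{\str C\choose \str B}$ in which all copies of $\str A$ are $\phi$-equivalent.

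Finally I would derive the contradiction via ultrahomogeneity. The inclusion $\str B\hookrightarrow\str U$ and the embedding $\str B\cong\widetilde{\str B}\hookrightarrow\str U$ present two isomorphic finite substructures of $\str U$; by ultrahomogeneity, the induced isomorphism $\str B\to\widetilde{\str B}$ extends to an automorphism $\alpha$ of $\str U$. Since $\phi$ is a first-order formula, $\alpha$ preserves $\phi$, so $\alpha(\vec a_1)$ and $\alpha(\vec a_2)$ still lie in distinct $\phi$-classes; but they are the $\rel{U}{\leq}$-ordered enumerations of two copies of $\str A$ inside $\widetilde{\str B}$, contradicting monochromaticity.

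The main subtlety, and the one place I would be careful, is the bookkeeping between ``copies of $\str A$'' (substructures) and ``tuples in the domain of $\phi$'' (ordered enumerations). This is exactly why the hypothesis asks the order on the tuple to agree with $\rel{U}{\leq}$: each copy of $\str A$ has a unique such enumeration, so colouring copies of $\str A$ by $\phi$-class is well defined and is preserved under automorphisms of $\str U$. Apart from this, the argument is a clean one-shot application of Ramsey plus ultrahomogeneity.
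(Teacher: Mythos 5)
Your proof is correct and follows the same overall strategy as the paper's: take $\str B$ containing two $\phi$-inequivalent copies of $\str A$, then observe that colouring copies of $\str A$ by $\phi$-class defeats the Ramsey arrow $\str C\longrightarrow (\str B)^{\str A}_k$. The one genuine difference is in how you transfer the inequivalence from $\str B$ to the monochromatic copy $\widetilde{\str B}$: the paper appeals to the fact that in an ultrahomogeneous (hence $\omega$-categorical) ordered structure every formula is equivalent to a quantifier-free one, so that $\phi$-inequivalence of a pair of copies is read off from the induced finite substructure alone; you instead extend the isomorphism $\str B\cong\widetilde{\str B}$ to an automorphism of $\str U$ and use that automorphisms preserve first-order formulas. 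These are two formulations of the same underlying fact about ultrahomogeneous structures, but your version is slightly more self-contained in that it avoids invoking quantifier elimination. Your bookkeeping remark about ``copies versus ordered tuples'' is exactly the right point to watch, and your use of the $k$-colour Ramsey arrow (which follows from the $2$-colour definition by iteration) is fine.
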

\begin{proof}
Assume to the contrary that $\phi$ is an equivalence formula on copies of $\str{A}$ which defines $k$ equivalence classes, $\infty>k>1$.
 It is well known that from ultrahomogeneity we can assume that $\phi$ is quantifier free.
Consequently, there is a finite substructure $\str{B}\subseteq \str U$ that contains two such copies of $\str{A}$ which belong to two different equivalence classes of $\phi$.
Partition $\str U\choose \str A$ to $k$ equivalence classes of $\phi$. Since $\phi$ is quantifier free, we get that there is no $\widetilde{\str B}\in {\str U \choose \str B}$ such that $\widetilde{\str B}\choose \str A$ would lie in a single equivalence class. Clearly, this implies that there is no $\str C\in \mathcal K$ such that $\str{C}\longrightarrow (\str{B})^\str{A}_k$, hence contradicting the Ramsey property.
\end{proof}
\begin{remark}
Recall that tuples $\vv{a}$ and $\vv{b}$ have the same \emph{strong type} if
$\phi(\vv{a},\vv{b})$ holds for every equivalence formula $\phi$ with finitely many
equivalence classes.
By the above observation it follows that the automorphism group of the \Fraisse{}  
limit of a Ramsey class must also fix strong types (such automorphisms are considered, for example, in~\cite{Ivanov1997}).
\end{remark}

For a given equivalence formula $\phi$ with finitely many equivalence classes
it is possible to lift the language by explicitly adding relations representing the
individual equivalence classes. This will be demonstrated on two examples in
this section.

Our first example is  a simple class with a perhaps surprising Ramsey lift.
The important property of this example is that equivalences are definable on pairs of vertices rather than singletons
(for which we have already discussed $\{1,3\}$-metric spaces as an example).

Consider structures with a single quaternary relation $\rel{}{\mathbb E}$. We say that
a structure $\str{A}$ is a \emph{fat bipartite graph} if there exists a
bipartite graph $G=(V,E)$ with $V={A\choose 2}$ and
$$(a,b,c,d)\in \rel{A}{\mathbb E} \hbox{ if and only if } a\neq b, c\neq d\hbox{, and } \left\{\{a,b\},\{c,d\}\right\}\in E.$$

It is easy to see that an $\omega$-categorical universal fat bipartite graph $\str{U}_{\mathcal F\mathcal B}$ can be constructed
by assigning bipartitions to pairs at random and producing a random bipartite graph
spanning these partitions. There is an equivalence $\sim$ defined on the unordered pairs of vertices of $\str{U}_{\mathcal F\mathcal B}$ as follows:
$\{u,v\}\sim \{u',v'\}$ if and only if they are connected by a fat path of length two.  By Proposition~\ref{prop:imaginary}
we know that every Ramsey lift will thus have a binary relation denoting the bipartition.
Consequently, we can introduce binary relation $\rel{}{\mathbb{L}}$ (denoting the class of bipartition) explicitly into our lifted language
which yields the following:
\begin{theorem}
The class $\mathcal F\mathcal B$ of all finite fat bipartite graph has the following precompact Ramsey lift $\mathcal F\mathcal B^+$ with the lift property:

The language $L^+_\mathcal{FB}$ is extended by two binary relations $\leq$ and $\rel{}{\mathbb{L}}$, where $\leq$ is a free ordering of vertices and $\rel{}{\mathbb{L}}$ denotes one of the two bipartitions of pairs.
\end{theorem}
\begin{proof}
The class  $\mathcal F\mathcal B^+$ is locally finite subclass of all finite ordered
$L^+_\mathcal{FB}$-struc\-tures: If a structure $\str{A}$ has no $\mathcal F\mathcal B^+$-completion then it either contains a tuple $(a,a)$ in $\rel{}{\mathbb{L}}$ or analogously a tuple with incorrectly duplicated vertices in $\rel{A}{\mathbb E}$ or a tuple $(a,b,c,d)\in \rel{A}{\mathbb E}$ such that either $(a,b),(c,d)\notin \rel{A}{\mathbb{L}}$ or $(a,b),(c,d)\in \rel{A}{\mathbb{L}}$.
\end{proof}
(In fact, Theorem~\ref{thm:mainstrong} is not necessary here, the Ramsey property also follows by an application of Theorem~\ref{thm:NR}.)
What is interesting about this lift? If one considers the shadow of $\str{U}_{\mathcal F\mathcal B}$ in the language containing only the relation $\rel{}{\mathbb{L}}$,
it will form the Rado graph. 
This shows that the precompact lifts with the lift property may give rise to rich structures and not only to orders and unary relations (as in most cases mentioned so far).
It is easy to generalise this example further (giving fat analogies to Corollary~\ref{cor:Hcolor}, forbidding a homomorphism from a graph in the language $\rel{}{L}$, or introducing a fat linear order as in Theorem~\ref{thm:fatorder}).

As our second example, consider structures with a single ternary relation $\rel{}{\mathbf E}$. We say that a structure $\str{A}$
is a \emph{neighbourhood bipartite graph} if for every vertex $v\in A$ the digraph $\str{G}_v$ is a bipartite graph, where $\str{G}_v$ is defined on the vertex
set $\str{A}\setminus\{v\}$ and $(a,b)\in E_{\str{G}_v}$ if and only if $(v,a,b)\in \rel{A}{\mathbf E}$.

The class of all neighbourhood bipartite graphs is not a locally finite subclass of the class of all relational structures with a single ternary relation $\rel{}{\mathbf E}$, since there is a definable equivalence on 2-tuples of vertices of the generic
neighbourhood bipartite graph: $(u,v)\sim (u,v')$ if $v$ and $v'$ are connected by
a path of length two in $\str{G}_u$.  This time, however, the number of
equivalence classes is not finite and we cannot apply
Proposition~\ref{prop:imaginary} directly.

It is easy to observe that if $\str U$ is a homogeneous structure with a Ramsey age, then so is $\str U^+$ which differs from $\str U$ by distinguishing one vertex by a special unary relation (\ie{}\ the automorphism group of $\str{U}_{\mathcal N\mathcal B}$
is forced to fix the vertex)~\cite{Bodirsky2015}.
Let $\str{U}_{\mathcal N\mathcal B}$ be the $\omega$-categorical universal neighbourhood bipartite graph. If we distinguish one vertex $u$ then the equivalence $v\sim_u v'\iff v,v'\text{ are connected by a path of length 2 in }\str G_u$ become definable in the sense of Proposition~\ref{prop:imaginary}.
Consequently, every Ramsey lift of $\str{U}_{\mathcal N\mathcal B}$ must already explicitly represent one of the two equivalence classes (in the model-theoretic setting this corresponds to the elimination of imaginaries with a parameter). We can eliminate these imaginaries at once by means of a binary relation:

\begin{theorem}
The class $\mathcal N\mathcal B$ of all finite neighbourhood bipartite graph has the following precompact Ramsey lift $\ordclass{\mathcal {NB}}$ with the lift property:

The language is extended by two binary relations $\leq$ and $\rel{}{\mathbb L}$. The order $\leq$ is free. Relation $\rel{}{\mathbb L}$
has the property that for every vertex $v$ the set of vertices connected to $v$ by $\rel{}{\mathbb L}$ is one of the bipartitions of the graph
$\str{G}_v$.
\end{theorem}
This time the shadow of the Ramsey lift of the generic neighbourhood bipartite graph produces the generic digraph.
We further develop ``neighbourhood structures'' in Section~\ref{sec:manyorders}. For this we however need to deal with
closures and functions.

\subsection {Ramsey classes with closures}
\label{sec:examplesclosure}
A lot of more complex Ramsey classes contain equivalences defined on vertices (and even tuples of vertices) which are not explicitly present in the language and which have infinitely many equivalence classes (\eg{}\ $S$-metric spaces with jump numbers~\cite{Sauer2013} or bowtie-free graphs~\cite{Hubivcka2014}
which we shall handle in Section~\ref{sec:metric2} and~\ref{sec:CSS} respectively). This means that one cannot assign labels to them and use Theorem~\ref{thm:mainstrong} in a relational language. In such situations one can make use of an interpretation in a language with functions. 
We first explain this on a simple example and later apply this technique in more complex situations.

\medskip

Let $\sim$ be an equivalence on set $A$. To every equivalence class $E$ of $\sim$ we assign a vertex $v_E$ and define a choice function $\func{}{}\colon A\to A$ which maps every vertex $v$ to $v_E$ where $E$ is the $\sim$-equivalence class containing $v$. What we obtain is a structure $\str{A}(\sim)$ in the language $L_{\mathcal {PE}}$ consisting from a unary function $\func{}{}$.
The class of all structures $\str{A}(\sim)$ is denoted by $\mathcal {PE}$. Explicitly,
class $\mathcal{PE}$ contains all finite $L_\mathcal{PE}$-structures $\str{A}$ where for every $u\in A$
it holds that $(u)\in\dom(\func{A}{})$ and $\func{A}{}(\func{A}{}(u))=\func{A}{}(u)$.

$\mathcal{PE}$ stands for \emph{pointed equivalences}: in every equivalence class we selected a special vertex (thus obtaining a ``pointed set'').
Clearly, embeddings of pointed equivalences $\str{A}(\sim_1)$ to $\str{A}(\sim_2)$ correspond to embeddings of $\sim_1$ into $\sim_2$ (as relations) with the additional property that special vertices are mapped to special vertices. Thus we have an interpretation of the class of equivalences and their embeddings. Combining this with Theorem~\ref{thm:mainstrong} and Theorem~\ref{thm:models} we obtain:
\begin{theorem}
\label{thm:pointed}
The lift $\ordclass{\mathcal{PE}}$ of $\mathcal{PE}$ which adds a free order on vertices is a Ramsey class.
\end{theorem}
\begin{proof}
Denote by $L^+_\mathcal{PE}$ the lift of language $L_\mathcal{PE}$ adding binary relation $\leq$.
Apply Theorem~\ref{thm:models} to show that $\ordclass{\Str}(L^+_\mathcal{PE})$ is Ramsey.
To apply Theorem~\ref{thm:mainstrong} we verify that $\ordclass{\mathcal{PE}}$ is a locally finite subclass of $\ordclass{\Str}(L^+_\mathcal{PE})$.
This follows easily for $n=1$: Every structure $\str{A}\in \ordclass{\Str}(L^+_\mathcal{PE})$ such that every irreducible substructure of $\str{A}$ is in $\ordclass{\mathcal{PE}}$
is itself in $\ordclass{\mathcal{PE}}$.
\end{proof}
Note that to obtain a lift with the lift property it is necessary to order vertices in
a convex way where every equivalence class forms an interval and in each such interval the special vertex must be the first one.

Theorem~\ref{thm:pointed} is only the tip of the iceberg and there are many applications of this technique. We give several examples in the next section.

\subsubsection{Unary functions (only) are easy}
\label{sec:unaryfunctions}

First we consider unary functions (of which Theorem~\ref{thm:pointed} is a particular example).  Despite the seeming complexity (as exemplified
by~\cite{Sokic2016}) the basic result here is deceptively easy and can be formulated
as follows. 

Consider a structure $\str{A}$ with (unary) function symbols, $\str{A}=(A,\func{A}{1}, \func{A}{2},\allowbreak \ldots, \func{A}{m})$,
where each $\func{}{i}$ is a function $A\to A$.
Such structures represent the most natural example of a
class with a closure. For example, given a structure $\str{B}=(\{u,v\},\func{B}{1})$ where $\func{B}{1}(u)=v$ and $\func{B}{1}(v)=v$,
the closure of $u$ in $\str{B}$ is $\str{B}$ itself, there is no structure induced by $\str{B}$ on $\{u\}$.

Denote by $\mathcal F^m_1$ the class of all finite structures with $m$ unary functions. As usual, by ordered structures with $m$ unary functions we will mean structures from $\mathcal F^m_1$ together with a linear order on vertices.
The class of all finite ordered structures with $m$ unary functions will be denoted by $\ordclasssup{\mathcal F}{1}^m$. Recall that, given a vertex $v$ of a structure $\str{A}$, its \emph{vertex closure} is the smallest substructure of $\str{A}$ containing~$v$. 

The Ramsey property of $\ordclasssup{\mathcal F}{1}^m$ follows by a simple direct
argument:

\begin{theorem}
\label{thm:unaryfunc}
Let $\str{A}$ be a finite ordered structure with $m$ unary functions and let
$\str{B}$ be a finite or countably infinite ordered structure with $m$ unary functions.
If $\str{B}$ is infinite, assume moreover that $\leq_\str{B}$ is isomorphic to the order of natural numbers.
Then there exists an ordered structure with $m$ unary functions $\str{C}$ such that
$\str{C}\longrightarrow (\str{B})^\str{A}_2$.

Moreover if $\str{B}$ is finite, then
$\str{C}$ is finite, too. If all vertex closures of vertices of $\str{B}$ are finite, then $\str{C}$
is countable.
\end{theorem}
\begin{proof}
Fix ordered structures with $m$ unary functions $\str{A}$ and $\str{B}$.
Without loss of generality assume that $B=\{1,2,\ldots, b\}$ or $B=\mathbb N$ and is ordered naturally by $\leq_\str{B}$.
 Obtain $N\longrightarrow (b)^{\vert A\vert }_2$ by the Ramsey Theorem.
Consider a lifted language adding a unary relation $\rel{}{i}$ for every $1\leq i\leq N$.

Now construct a structure $\str{P}$ as follows: For each $b$-tuple $\vv{v}=(v_1,v_2,\ldots v_b)$ of elements of $\{1,2,\ldots,N\}$ such that $v_1< v_2<\cdots< v_b$ add a disjoint copy $\str{B}_{\vv{v}}$ of $\str{B}$ to $\str{P}$ and for every $n$, $1\leq n\leq b$, put the $n$-th smallest vertex of $\str B_{\vv{v}}$ into $\rel{P}{v_n}$. Order vertices of $\str{P}$ linearly such that for every $1\leq i< j\leq N$ every vertex $v\in \rel{P}{i}$ is before every vertex $v'\in \rel{P}{j}$. (Note that this is essentially picture zero of the partite construction, cf. Section~\ref{sec:partiteconstruction}.)

From $\str P$, construct a structure by identifying every pair of vertices of $\str{P}$ with isomorphic vertex closures (isomorphic including the unary relations). Finally remove the unary relations and call the resulting structure $\str{C}$ (that is, $\str{C}$
is an ordered structure with $m$ unary functions.)  From the construction it follows that there is a homomorphism from $\sh(\str{P})$ to $\str{C}$ which is an embedding on every $\sh(\str{B}_{\vv{v}})$.

It is easy to verify that $\str{C}\to (\str{B})^\str{A}_2$: A colouring of copies of $\str{A}$ in $\str{C}$ gives a colouring of $\vert A\vert $-tuples of $\{1,2,\ldots, N\}$ (note that there is only one copy of $\str{A}$ for every $\vert A\vert $-tuple of elements of $\{1,2,\ldots,N\}$) and the Ramsey Theorem gives a monochromatic $b$-tuple which corresponds to a copy of $\str{B}$ in $\str{P}$ and thus also to a copy of $\str{B}$ in $\str{C}$.
\end{proof}
As a consequence we obtain the Ramsey property of $\mathcal F^m_1$.
\begin{corollary}[\cite{Sokic2016}]
$\ordclasssup{\mathcal F}{1}^m$ is a Ramsey lift of $\mathcal F^m_1$.
\end{corollary}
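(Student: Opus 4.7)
The plan is to observe that this corollary is an essentially immediate packaging of Theorem~\ref{thm:unaryfunc}, together with the trivial fact that linear orderings always exist on finite sets. There is no substantial obstacle to overcome; all the work has been carried out in the preceding proof.

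First I would verify that $\overrightarrow{\mathcal F}^m_1$ is indeed a lift of $\mathcal F^m_1$ in the sense of Section~\ref{sec:lifts}. The language of $\overrightarrow{\mathcal F}^m_1$ extends that of $\mathcal F^m_1$ by the single binary relational symbol $\rel{}{\leq}$, and the shadow $\sh(\str X)$ of any ordered structure with $m$ unary functions (obtained by forgetting $\rel{}{\leq}$) is an $m$-unary-function structure, so $\sh(\overrightarrow{\mathcal F}^m_1)\subseteq \mathcal F^m_1$. Conversely, given any $\str{A}\in\mathcal F^m_1$ we obtain a lift by interpreting $\rel{}{\leq}$ as an arbitrary linear order on $A$; this shows $\mathcal F^m_1\subseteq \sh(\overrightarrow{\mathcal F}^m_1)$. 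Moreover, on a finite vertex set there are only finitely many linear orderings, so the lift is precompact.

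Second, the Ramsey property of $\overrightarrow{\mathcal F}^m_1$ is supplied directly by Theorem~\ref{thm:unaryfunc}: given any two finite $\str{A},\str{B}\in\overrightarrow{\mathcal F}^m_1$ that theorem produces a finite $\str{C}\in\overrightarrow{\mathcal F}^m_1$ with
$$\str{C}\longrightarrow(\str{B})^{\str{A}}_2.$$
Combining the two observations completes the argument that $\overrightarrow{\mathcal F}^m_1$ is a (precompact) Ramsey lift of $\mathcal F^m_1$. As a final remark one can also check the lift property of Definition~\ref{defn:ordering}: given an ordered $\str{A}$, take $\str{B}$ to be a sufficiently long disjoint union (in $\overrightarrow{\mathcal F}^m_1$) of copies of the shadow of $\str{A}$ arranged so that every possible ordering of that shadow appears as a convex segment; then every lift of $\str{B}$ embeds every lift of $\str{A}$. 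This step, however, is not needed for the corollary as stated.
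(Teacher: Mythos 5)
Your main argument is exactly what the paper intends: the corollary is stated as an immediate consequence of Theorem~\ref{thm:unaryfunc}, with no separate proof given, and your two observations (that $\overrightarrow{\mathcal F}^m_1$ is a precompact lift of $\mathcal F^m_1$, and that Theorem~\ref{thm:unaryfunc} delivers the Ramsey property for it) are all that is needed.

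Your final remark about the lift property, however, is false, and the paper says so explicitly in the remark immediately following the corollary: $\overrightarrow{\mathcal F}^m_1$ does \emph{not} have the lift property with respect to $\mathcal F^m_1$. Your proposed construction of $\str{B}$ as a disjoint union of copies of $\sh(\str{A})$ does not work, because the lift property quantifies over \emph{every} ordering $\str{B}^+$ of $\str{B}$, not just the one you constructed. Concretely, take $m=1$ and $\str{A}$ on two vertices $u,v$ with $f(u)=v$, $f(v)=v$. Whatever $\str{B}\in\mathcal F^1_1$ you pick, you may order $B$ so that all fixed points of $f_{\str{B}}$ precede all non-fixed points; in that ordering $\str{B}^+$ there is no embedded copy of the ordered $\str{A}^+$ in which $u<v$. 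A lift with the lift property has to impose a much more constrained convex ordering of the components (as the paper's remark sketches and attributes to \cite{Sokic2016,Evans2}), so this is not a throwaway verification. Since you flagged the remark as unnecessary for the corollary, the body of your proof stands; just delete the last remark.
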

\begin{remark}
Note that $\ordclasssup{\mathcal F}{1}^m$ does not have lift property with respect to $\mathcal F^m_1$.
If one views structures in $\mathcal F^1_1$ as oriented graphs (with edges
 pointing from $v$ to $\func{}{1}(v)$) then these graphs form a forest of ``graph trees''
oriented towards a root where the root may be an oriented cycle. To obtain the
lift property the order needs to be convex with respect to the individual
connected components, it needs to order the cycles of a given size in a unique
way and the vertices of trees need to be ordered convexly level-wise with
children of a vertex forming a linear interval~\cite{Sokic2016}.
  The lift property becomes even more involved for classes $\mathcal F^m_1$, $m>1$.
 A precise description of this lift will appear in~\cite{Evans2}. 
\end{remark}

\begin{remark}
Unary functions can be seen as a generalisation of structures
with unary relations: Every unary relation $\rel{}{}$ can be represented by a unary function $\func{}{}$ and two artificial vertices $0$, $1$ by putting $\func{}{}(v)=1$ if $(v)\in \rel{}{}$ and $f(v)=0$ otherwise.  This gives an intuition why the Ramsey property of
classes with unary functions follows by a simple argument and why this argument cannot be easily generalised to non-unary functions.
Still, the proof of Theorem~\ref{thm:unaryfunc} can be seen as a basic case of the partite construction where the partite lemma
is replaced by an identification of all copies of $\str{A}$ with a given projection to one.
\end{remark}

In a way, structures with unary functions are a misleading (easy) example.
The proof of Theorem~\ref{thm:unaryfunc} should be contrasted with the situation for function symbols of higher arities where we need our main theorem. 

\subsubsection {$S$-metric spaces}
\label{sec:metric2}
We are now ready to further develop results of Section~\ref{sec:metric} and
complete the study of Ramsey properties of general $S$-metric spaces (\ie{}\ even for sets $S$ containing jump numbers).
This generalises results of~\cite[Theorem 25]{The2010} where the Ramsey property of $S$-metric spaces was shown for all sets $S$ containing at most
3 distances. This also confirms the conjecture stated in~\cite{The2010} that every $S$-metric space with $S$ finite has a precompact Ramsey lift.

We use the notation introduced in Section~\ref{sec:metric} (in particular Definitions~\ref{defn:smetric} and \ref{defn:lmetric} which introduce classes $\mathcal M_S$ and $S$-metric graphs, the 4-values condition and the operation $\oplus_S$).
Our analysis is based on (and refines)~\cite{Sauer2012} which gives a family of definable
equivalences on $S$-metric spaces when $S$ contains jump numbers. The following definition is a generalisation of a definition from~\cite{Sauer2012} for not-necessarily finite sets.

\begin{definition}[\cite{Sauer2012}]
Let $S\subseteq \mathbb R_{>0}$ be a subset satisfying the 4-values condition where $\oplus_S$ is well-defined.
A~\emph{block} of $S$ is any inclusion maximal subset $B$ of $S$ satisfying the 4-values condition
that has no jump number (see Definition~\ref{defn:jump}).
\end{definition}
In other words, blocks are maximal sets on which $\oplus_S$ is archimedean. It is shown in~\cite{Sauer2012} that finite $S$ satisfying the 4-values condition can be decomposed to mutually disjoint blocks and that for every block $B\subseteq S$ other than the block containing the largest numbers, the value of $\max(B)$ is defined and it is a jump number. In turn, this gives equivalences on $S$-metric spaces:

\begin{definition}[\cite{Sauer2012}]
Let $S\subseteq \mathbb R_{>0}$ be a subset satisfying the 4-values condition, let $\str{A}$ be an $S$-metric space and let $B$ be a block of $S$. We define a \emph{block equivalence $\sim_B$}
on vertices of $\str{A}$ by putting $u\sim_B v$ whenever there is $b\in B$ such that $d(u,v)\leq b$.
\end{definition}
Note that if $\max(B)$ is defined, one can always put $b=\max(B)$. It is easy to see that $\sim_B$ is indeed an equivalence relation. By Proposition~\ref{prop:imaginary}
it is thus necessary to lift $\mathcal{M}_S$ to represent these
equivalences explicitly.

In this section we aim to show that the following class is Ramsey.

\begin{definition}[\cite{The2010}]
\label{defn:Js}
Given $S\subseteq \mathbb R_{>0}$ satisfying the 4-value condition on which the $\oplus_S$ operation is defined.
We say that for an ordered $S$-metric space $\str A$ the order $\leq_\str{A}$ is \emph{convex with respect
to block equivalences} if every equivalence class of every $\sim_B$, for every block $B$ of $S$ is an interval of $\leq_\str{A}$.
An ordered $S$-metric space whose order is convex with respect to block equivalences is also called a \emph{convexly ordered $S$-metric space}.

The class of all finite convexly ordered $S$-metric spaces will be denoted by $\ordclasssup{\mathcal M}{S}$.
\end{definition}

We first focus on finite $S\subseteq \mathbb R_{>0}$ satisfying the 4-values
condition. In this setting the operation $\oplus_S$ is always well-defined (and associative) and
every block contains a maximal element which is either a jump number or
$\max(S)$. We will denote by $J_S$ the set of all jump numbers. For every jump number $j\in J_S$ we will denote by $B_j$ the corresponding block
containing $j$.

Now we are ready to describe a construction extending $S$-metric spaces by new vertices
representing all definable equivalences which will lead to a class where Theorem~\ref{thm:mainstrong} can be applied.
For this we lift the language $L_S$ to $L^+_S$ by adding the order $\leq$ and unary functions $\func{}{j}$ for every
$j\in J_S$.
\begin{definition}
\label{defn:L}
For a given convexly ordered metric space $\str{A}\in \ordclasssup{\mathcal M}{S}$ we denote by $L(\str{A})$ the $L^+_S$-structure defined by means of the following procedure:
\begin{enumerate}[label=(\roman*)]
\item\label{step1} For every $j\in J_S$ enumerate the equivalence classes of $\sim_{B_j}$ in $\str{A}$ as $E^1_j, E^2_j,\allowbreak \ldots, E^{n_j}_j$ such that for $v\in E^i_j$ and $v'\in E^{i'}_j$ we have $v<_\str{A} v'$ whenever $1\leq i<i'\leq n_j$.
\item\label{step2} For every $j\in J_S$ and $1\leq i\leq n_j$ add a new vertex $v^i_j$.
\item\label{step3} For every $j\in J_S$, $1\leq i\leq n_j$ and $u\in E^i_j$ put $\nbfunc{L(\str{A})}{j}(u)=v^i_j$ and $(u,v^i_j),(v^i_j,u)\in \nbrel{L(\str{A})}{j}(u)$ (so $d_{L(\str{A})}(u,v^i_j)=j$).
\item\label{step4} Complete the remaining distances by the shortest path completion (Proposition~\ref{prop:sauer1}).
\item\label{step5} Extend the order of $\str{A}$ to $\leq_{L(\str{A})}$ by considering every $j\in J_S$ (from smallest to largest) and for every $1\leq i\leq n_j$ putting vertex $v^i_j$ immediately after the last vertex of $E^i_j$.
\end{enumerate}
\end{definition}
Thus every $\sim_j$ equivalence class $E^i_j$ in $L(\str{A})$ has a unique vertex $v^i_j\notin A$ such that all other vertices are linked to it by means of
functions $\nbfunc{L(\str{A})}{j}$.  We will call this special vertex the \emph{closure} vertex corresponding to $E^i_j$.
All vertices in $A$ are \emph{original} vertices.

Clearly vertices $v^i_j$ may be regarded as added imaginaries and we still consider $L(S)$ as an $L^+_S$-lift of $\str{A}$ despite the fact that the $L_S$-shadow of $L(\str{A})$ contains added vertices $v^i_j$.

To simplify the notation below we will denote the trivial equivalence by $\sim_{B_0}$, that is, $u\sim_{B_0} j\iff u=j$.
\begin{lemma}
\label{lem:samedist}
Let $S\subseteq \mathbb R_{>0}$ be a finite subset satisfying the 4-values condition, let $\str{A}\in \mathcal M_S$ be an $S$-metric space, $j_1,j_2\in
J_S\cup \{0\}$, let $E_1$ be a $\sim_{B_{j_1}}$ equivalence class (in $A$) and let $E_2$ be a 
$\sim_{B_{j_2}}$ equivalence class such that $E_1\cap E_2=\emptyset$.
Then there exists $\ell$ such that for every $u\in E_1$ and $v\in E_2$ it holds
that $\ell=j_1\oplus_S d(u,v)\oplus j_2$.
\end{lemma}
\begin{proof}
Assume to the contrary that there are $\str{A}$, $j_1$, $j_2$, $E_1$ and $E_2$ as
in the statement of the lemma and moreover there are $u,u'\in E_1$, $v,v'\in E_2$ such
that $j_1\oplus_S d_\str{A}(u,v)\oplus_S j_2<j_1\oplus_S d_\str{A}(u',v')\oplus_S j_2$.

Construct a metric space $\str{B}$ on vertices $a,b,c,d$ such that $d_\str{B}(a,b)=j_1$,
$d_\str{B}(c,d)=j_2$, $d_\str{B}(a,c)=d_\str{B}(u',v')$ and the other distances are given  
by the shortest path completion (Proposition~\ref{prop:sauer1}). Observe that $d_\str{B}(b,d)=j_1\oplus_S d_\str{B}(u',v')\oplus_S j_2$.

Because $\mathcal M_S$ is a strong amalgamation class, it contains an
amalgamation $\str{C}$ of $\str{A}$ and $\str{B}$ unifying vertices $a,u'$ and $c,v'$.
Here the distance of $b$ and $u$ at most $j_1$ and the distance of $d$ and $v$
is at most $j_2$. Now $d_\str{C}(b,d)=j_1\oplus_S d_\str{C}(u',v')\oplus_S j_2
>j_1\oplus_S d_\str{C}(u,v)\oplus j_2\geq d_\str{C}(b,u)\oplus_S d_\str{C}(u,v)\oplus_S
d_\str{C}(v,d)$ and thus the cycle $b,u,v,d$ is non-metric.  A contradiction.
\end{proof}
\begin{lemma}
\label{lem:Smetricemb}
Let $S\subseteq \mathbb R_{>0}$ be a set satisfying the 4-values condition on which the $\oplus_S$ operation is
defined and let $\str{A}\in \ordclasssup{\mathcal M}{S}$. Then the $L_S$-shadow of $L(\str{A})$ is a metric
space (which includes both the original and closure vertices) and the order $\leq_{L(\str{A})}$ is convex with respect to block equivalences.
 Moreover, for every $\str{B}\in \ordclasssup{\mathcal M}{S}$ it holds that every embedding $f\colon\str{B}\to \str{A}$ extends uniquely
to an embedding $L(\str{B})\to L(\str{A})$.
\end{lemma}
\begin{proof}
The first part is a consequence of Proposition~\ref{prop:sauer1} and the fact
that step~\ref{step3} of Definition~\ref{defn:L} did not introduce any non-metric triangles or cycles.

To see the second part, consider convexly ordered $S$-metric spaces $\str{A}$ and $\str{B}$ and, for simplicity,
assume that $\str{B}$ is a substructure of $\str{A}$. Put $\str{A}^+=L(\str{A})$ and $\str{B}^+=L(\str{B})$.
It is easy to see that every new vertex of $\str B^+$ introduced in step~\ref{step2} of Definition~\ref{defn:L} 
has a unique corresponding vertex of $\str A^+$ introduced in step~\ref{step2} of Definition~\ref{defn:L} and that
the order of $\str{A}$ and $\str{B}$ was extended same way.  We can
thus define $f\colon B^+\to A^+$ to be the identity on $B$ and to map closure vertices of $\str{B}^+$ to the
corresponding closure vertices of $\str{A}^+$. It remains to verify that $f$ preserves distances
introduced in step~\ref{step3} of Definition~\ref{defn:L}. This is a consequence of Lemma~\ref{lem:samedist}.
\end{proof}

\begin{definition}
\label{defn:Mp}
Let $S\subseteq \mathbb R_{>0}$ be a finite set satisfying the 4-values condition.
Denote by $\ordclasssup{\mathcal M}{S}^+$ the class of all finite $L^+_S$-structures $\str{A}$ satisfying:
\begin{enumerate}
 \item\label{cond:1} The $L_S$-shadow of $\str{A}$ is an $S$-metric space.
 \item\label{cond:2} $\leq_\str{A}$ is a linear order convex with respect to block equivalences.
 \item\label{cond:3} For every $j\in J_S$ and every $\sim_{B_j}$-equivalence class $E$ whose last vertex (in the order $\leq_\str{A}$) is $v$ it holds
    that for every $u\in E$, $u\neq v$ the distance of $u$ and $v$ is $j$ and $\func{A}{j}(u)=v$. $\func{A}{j}(v)$ is undefined.
We will again call such $v$ the \emph{closure} vertex of the equivalence class $E$.
\end{enumerate}
\end{definition}

\begin{lemma}
Let $S\subseteq \mathbb R_{>0}$ be a finite set satisfying the 4-value condition. Then $\ordclasssup{\mathcal M}{S}^+$ is a lift of $\ordclasssup{\mathcal M}{S}$ with the strong amalgamation property.
\end{lemma}
\begin{proof}
It is easy to see that for every $\str{D}\in \ordclasssup{\mathcal M}{S}$ it holds that $L(\str{D})\in \ordclasssup{\mathcal M}{S}^+$.

We verify the strong amalgamation property.
Consider $\str{A},\str{B}_1,\str{B}_2\in \ordclasssup{\mathcal M}{S}^+$ such that $\str{A}$ is a substructure of both $\str{B}_1$ and $\str{B}_2$.
For simplicity assume that $B_1\cap B_2=A$.
The strong amalgamation $\str{C}$ of $\str{B}_1$ and $\str{B}_2$ over $\str{A}$ is then constructed from the free amalgamation 
by completing all missing distances by the shortest path completion:
For $v_1 \in B_1\setminus A$ and $v_2\in B_2\setminus A$ we put $d_\str{C}(v_1,v_2)=\min_{c\in A}(d_{\str B_1}(v_1,c)\oplus_S d_{\str B_2}(c,v_2))$.

Consider jump number $j\in J_S$ and a $\sim_{B_j}$-equivalence class $E$ in $\str{C}$
such that there are vertices $v_1\in E\cap B_1$ and $v_2\in E\cap B_2$.
Because distances between vertices in $\str{C}$ are completed by means of the $\oplus_S$ operation we know that it
means that there is a vertex $c\in \str{A}$ such that $d_{\str{B}_1}(v_1,c)\leq j$ and $d_{\str{B}_2}(v_2,c)\leq j$.
We can also choose $c\in \str{A}$ to be the unique closure vertex of the  $\sim_{B_j}$ equivalence class  of $\str{B}_1$
containing $v_1$ and of the $\sim_{B_j}$ equivalence class  of $\str{B}_2$ containing $v_2$. Consequently, 
it is possible to complete $\leq_\str{C}$ to a linear order convex with respect to block equivalences
satisfying the additional assumption about the closure vertex being last in its equivalence class.
\end{proof}

\begin{remark}
It may seem more natural to define $\ordclasssup{\mathcal M}{S}^+$ as the class of all $L(\str{A})$, $\str{A}\in \ordclasssup{\mathcal M}{S}$.
This class is however not hereditary and we would not be able to apply Theorem~\ref{thm:mainstrong} directly.

It may also be tempting to not define distances on closure vertices (that is, omit step \ref{step4} of Definition~\ref{defn:L}).
This would however lead to problems with the amalgamation property.
To see that, let $S=\{1,3,5\}$ and consider metric spaces $\str{B}_1$ consisting of two vertices $B_1=\{u,v\}$ in distance 3
and $\str{B}_2$ consisting of two vertices $B_2=\{u',v'\}$ in distance 5.
Omitting step~\ref{step4} of Definition~\ref{defn:L} would make it possible to consider amalgamation of
$L(\str{B}_1)$ and $L(\str{B}_2)$ identifying $u$ with $u'$ and $\nbfunc{L(\str{B}_1)}{1}(v)$ with $\nbfunc{L(\str{B}_2)}{1}(v')$.
However, identifying the closure vertices for $v$ and $v'$ means that in this amalgamation $v\sim_{B_1} v'$ and thus $d(v,v')=1$ which gives a non-metric triangle.
By defining the additional distances by means of Proposition~\ref{prop:sauer1} we solve this problem because: 
\begin{align*}
d_{L(\str{B}_1)}(u, \nbfunc{L(\str{B}_1)}{1}(v))&=3,\\
d_{L(\str{B}_2)}(u', \nbfunc{L(\str{B}_2)}{1}(v'))&=5.
\end{align*}
\end{remark}

The following definition and technical lemma are the key to obtaining a locally finite
description of $\mathcal{M}_S$ needed for Theorem~\ref{thm:mainstrong}:

\begin{definition}
Let $S\subseteq \mathbb R_{>0}$ be a finite set satisfying the 4-values condition and
let $\str{P}$ be a path with distances in $S$. Denote by $B(\str{P})$ the block
of $S$ containing the maximal distance of an edge in $\str{P}$.
Let  $\str{P}'$ be any $S$-metric path.  We say that
$\str{P}'\preceq_S \str{P}$ if all the distances in $\str{P}'$ are
bounded from above by a member of $B(\str P)$ (for example by $\max(B(\str{P}))$).
\end{definition}

\begin{figure}
\centering
\includegraphics{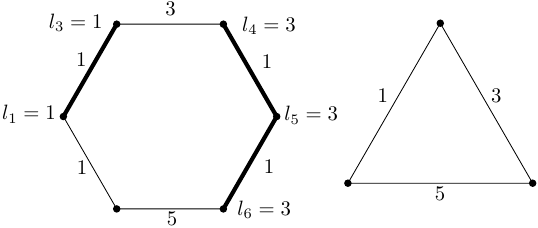}
\caption{The family of unimportant paths in a non-$\{1,3,5\}$-metric cycle (left), the non-$\{1,3,5\}$-metric
cycle created after the concatenation of unimportant paths (right).}
\label{fig:nonSmetric}
\end{figure}
\begin{lemma}
\label{lem:Sequivalence}
Let $S\subseteq \mathbb R_{>0}$ be a finite set satisfying the 4-values
condition and let $\str{C}$ be a non-$S$-metric cycle. Then there exist 
disjoint paths $\str{P}^i$, $1\leq i\leq p$, in $\str{C}$ such that
the cycle created by identifying all vertices of each path into a single vertex is a non-$S$-metric
cycle with at most $\vert S\vert $ vertices, and moreover every cycle created from
$\str{C}$ by replacing each of the paths $\str{P}^i$ by arbitrary path
$\str{P}'^i$, $\str{P}'^i\preceq_S \str{P}^i$ is non-$S$-metric.
\end{lemma}
We will call the paths $\str{P}^i$ \emph{unimportant}.
An example is given in Figure~\ref{fig:nonSmetric}.
\begin{proof}
Let $\str{C}$ be a non-$S$-metric cycle
with $n$ vertices.  By Proposition~\ref{prop:sauer1} we know that $\str{C}$
contains a pair of vertices whose distance is larger than the $S$-length of the path connecting them.

Enumerate the vertices of $C$ as $v_1,v_2,\ldots,v_n$
such that $$d_\str{C}(v_1,v_n)>d_\str{C}(v_1,v_2)\oplus_S d_\str{C}(v_2,v_3)\oplus_S\cdots \oplus_S d_\str{C}(v_{n-1},v_n).$$
For every $1< j\leq n$ denote by $$l_j=d_\str{C}(v_1, v_2)\oplus_S d_\str{C}(v_2, v_3)\oplus_S \cdots \oplus_S d_\str{C}(v_{j-1}, v_j)$$ the $S$-length of
the walk formed by the initial segment on $j$ vertices. We know that $l_j\leq l_{j+1}$ for every $1< j< n$.

We say that a path induced by $\str{C}$ on vertices $(v_j,v_{j+1},\ldots, v_k)$ is unimportant if $l_j=l_k$ and let the paths $\str{P}^i$ be all inclusion maximal unimportant paths. As a special case, if there
is only one inclusion maximal unimportant path on vertices $v_2,v_3,\ldots, v_n$,
put $\str{P}^1$ to be the path on vertices $v_2,v_3,\ldots, v_{n-1}$ so that the result of identification
is a triangle.

Because there are only finitely many values in $S$, we know that there  are
at most $\vert S\vert-1$ choices of $j$ such that $l_j<l_{j+1}$ ($\vert S\vert-1$, because we know that $l_n < d_\str{C}(v_1,v_n)\in S$).  We thus know that there
are at most $\vert S\vert-1$ pairs $v_j,v_{j+1}$ which do not belong to an unimportant
path.
 It follows that the graph created by identifying each unimportant path to a vertex
has at most $\vert S\vert $ vertices (and, because of the special case, at least three vertices).  It is also easy to verify that the resulting graph
is non-$S$-metric cycle because the $S$-length of the walk connecting $v_1$ and $v_n$
was not affected by the identifications. To prove the last part, observe that if path $\str{P}^i$ is unimportant and we replace it with a path $\str{P}'^i$ such that
$\str{P}'^i\preceq_S \str{P}^i$ then $\str P'^i$ is also unimportant.
\end{proof}
The following result is a direct analogy of Corollary~\ref{cor:finite4value} (this time with jump numbers).
\begin{lemma}
\label{lem:finite4value2}
Let $S\subseteq \mathbb R_{>0}$ be a finite set satisfying the 4-values
condition.  Then the class
$\ordclasssup{\mathcal M}{S}$ of all convexly ordered $S$-metric spaces  is a Ramsey class.
\end{lemma}
\begin{proof}
By Lemma~\ref{lem:Smetricemb} $L(\str{A})$ is a mapping lifting every $\str{A}\in \ordclasssup{\mathcal M}{S}$ to $L(\str{A})\in \ordclasssup{\mathcal M}{S}^+$ preserving substructures. Moreover every
$L_S\cup\{\leq\}$-shadow of a structure in $\ordclasssup{\mathcal M}{S}^+$ is a structure in  $\ordclasssup{\mathcal M}{S}$.
It follows that the Ramsey property of $\ordclasssup{\mathcal
M}{S}^+$ implies the Ramsey property of $\ordclasssup{\mathcal M}{S}$.
To apply Theorem~\ref{thm:mainstrong} and obtain that $\ordclasssup{\mathcal M}{S}^+$  is Ramsey
we verify
that $\ordclasssup{\mathcal M}{S}^+$ is a locally finite subclass of
$\ordclass{\Str}(L^+_S)$ which is Ramsey by Theorem~\ref{thm:models}.

\medskip

We show that for all $\str{C}_0\in \ordclass{\Str}(L^+_S)$ we can put $n=2\vert S\vert(\vert J_s\vert+1)$.
Let $\str{C}'$ be a structure with a homomorphism-embedding to $\str{C}_0$ such
that every $n$-element substructure of $\str{C}'$ has a (strong) completion $\str{C}$ in $\ordclasssup{\mathcal M}{S}^+$.
It follows that the $L_S$-shadow $\str{G}$ of $\str{C}'$ is an $S$-graph. We verify that 
$\str{G}$ can be completed to $S$-metric space by application of Proposition~\ref{prop:sauer1}.
For that we only need to verify that all cycles in $\str{G}$ are $S$-metric.
Assume, to the contrary, that there is non-$S$-metric cycle $\str{K}$ in $\str{G}$.
 Consider the family of
 unimportant paths in $\str{K}$ given by Lemma~\ref{lem:Sequivalence}. Let $\str{P}$ be an unimportant path and let $j$ be the smallest jump number of $S$ such that all distances in $\str{P}$ are at most $j$.  Then we know that there exists vertex $c\in C'$ such that $\nbfunc{\str{C}'}{j}(u)=c$ or $u=c$ for every $u\in P$. We call $c$ the \emph{common closure of the path $\str{P}$}.
Create $\str{K}'$ as the structure induced by
 $\str{C}'$ on the set of all vertices of $\str{K}$ which are not in unimportant paths, all initial and terminal vertices
 of unimportant path and the common closures of unimportant paths. This structure has at most $2\vert S\vert$ original vertices and thus at most $2\vert S\vert\vert J_S\vert$ closure vertices are added, hence it has at most $n$ vertices. By Lemma~\ref{lem:Sequivalence} there is no completion of $\str{K}'$. A contradiction.

We can thus create $\str{C}$ from $\str{C}'$ by completing all missing distances by means of the shortest walks as done in Proposition~\ref{prop:sauer1}.
It follows that $\str{C}$ satisfies condition~\ref{cond:1} of Definition~\ref{defn:Mp}.

\medskip

Next we verify that for every $j\in J_S$ and every $\sim_{B_j}$ equivalence class $E$ in $\str{C}$ it holds that $E$ contains precisely one vertex $c\in E$ such that $\func{C}{j}(c)$ is undefined and for every $v\in E$, $v\neq c$ it holds that $\func{C}{j}(v)=c$. This follows from the fact that for every $u$ and $v$ in $E$ there exists a path from $u$ to $v$ in $\str{G}$ consisting of distances at most $j$. Every such path has a common closure $c$.

Finally we complete $\leq_{\str{C}'}$ to a linear order that is convex with respect to block equivalences.
This can be done by considering each jump number $j\in J_S$ in decreasing order.  For each $j$ one can choose the relative order of $\sim_{B_j}$-equivalence classes respecting all inequalities in $\leq_\str{C}$ and the relative order of $\sim_{B_k}$-equivalence classes for $k>j$.  The resulting order will be convex and will have the property that the unique closure vertex corresponding to every equivalence class will be last.
This verifies that $\str{C}$ satisfies conditions~\ref{cond:2} and \ref{cond:3} of Definition~\ref{defn:Mp}.
We conclude that $\str{C}$ is an $\ordclasssup{\mathcal M}{S}^+$-completion of $\str{C}'$.
\end{proof}

Now we extend Lemma~\ref{lem:finite4value2} for infinite $S$ where $\oplus_S$ is defined and associative (recall that associativity is equivalent with the 4-values condition whenever $\oplus_S$ is defined):
\begin{prop}
\label{prop:Smetric}
Let $S\subseteq \mathbb R_{>0}$ be a set satisfying the 4-values
condition of which the $\oplus_S$ operation is defined.  Then the class
$\ordclasssup{\mathcal M}{S}$ of all convexly ordered $S$-metric spaces  is a Ramsey class.
\end{prop} 
\begin{proof}
Given $\str B\in \mathcal M_S$, we will find a finite $S'\subseteq S$ on which the $\oplus_{S'}$ operation is well-defined and associative such that $\str B$ uses only distances from $S'$. Lemma~\ref{lem:finite4value2} then gives Ramsey witnesses for $\str B$, which is what we need. It remains to construct such $S'$.

First observe that for every $m\in S$ the operation $\oplus_S^m$ defined as $a\oplus_S^m b = \min(m, a\oplus_S b)$ is well-defined and associative on the set $S^m=\{s\in S : s\leq m\}$ and hence $S_m$ satisfies the 4-values condition. Moreover, $S_m$ has only finitely many blocks and also has a maximum. Let $m$ be the largest distance occurring in $\str B$ and let $S'$ be the subset of $S$ consisting of distances which can be obtained as $\oplus_S^m$-sums of all finite nonempty sequences of distances occurring in $\str B$. Clearly, $\oplus_S^m$ is well-defined and associative on $S'$ and $S'$ contains all distances occurring in $\str B$, hence we only need to observe that $S'$ is finite. As a consequence of Lemma~\ref{lem:archimedean} we get that the smallest block of $S'$ is finite. 
To see that the second smallest block of $S'$ is also finite, one can observe that by associativity every value that is in the $\oplus_S^m$-closure of values in the first two blocks of $S'$ can be written as $(s_1\oplus_S^m s_2\oplus_S^m\cdots\oplus_S^m s_k)\oplus_S^m (s'_1\oplus_S^m s'_2\oplus_S^m\cdots\oplus_S^m s'_\ell)$ where $s_1,s_2,\ldots, s_k$ are distances in $\str B$ which all belong to the first block of $S'$ and $s'_1,s'_2,\ldots, s'_\ell$ are distances in $\str B$ which all belong to the second block of $S'$. Because the first block of $S'$ is finite, there are only finitely many possible values of $(s_1\oplus_S^m s_2\oplus_S^m\cdots\oplus_S^m s_k)$. Applying Lemma~\ref{lem:archimedean} again we get that there are only finitely many possible values of $(s'_1\oplus_S^m s'_2\oplus_S^m\cdots\oplus_S^m s'_\ell)$ and thus only finitely many values in the second block of $S'$. 
By induction, we get that all of the finitely many blocks of $S'$ are finite and thus $S'$ is finite.
\end{proof}

This result covers some countable non-closed sets $S$ (such as positive rationals).
For closed sets we can characterise all $S$-metric Ramsey classes:
\begin{theorem}[Characterisation of Ramsey lifts of $S$-metric spaces for closed $S$]
\label{thm:Smetric}
Let $S$ be a set of positive reals such that $S\cup\{0\}$ is closed. Then the following conditions are equivalent:
\begin{enumerate}
 \item\label{Smetric1} $S$ satisfies the 4-values condition,
 \item\label{Smetric2} $\mathcal M_S$ has the strong amalgamation property,
 \item\label{Smetric3} $\mathcal M_S$ has the amalgamation property, and
 \item\label{Smetric4} the class $\ordclasssup{\mathcal M}{S}$ of all convexly ordered $S$-metric spaces is Ramsey.
\end{enumerate}
\end{theorem}
\begin{proof}
$\ref{Smetric1}\iff \ref{Smetric2}$ by Corollary~\ref{cor:sauer2}.
Clearly $\ref{Smetric2}\implies \ref{Smetric3}$. To see that
$\ref{Smetric3}\implies \ref{Smetric2}$ consider $S$ which fails to satisfy the
4-values condition for $a,b,c,d$ and $x$. Assume to the contrary that $\mathcal M_S$ has the amalgamation property. It follows that the amalgamation of
triangles with distances $a$--$b$--$x$ and $c$--$d$--$x$ over the edge of distance $x$ must
identify vertices. To make this possible, it must hold that $a=c$ and $b=d$, bud then the 4-values condition is trivially satisfied by putting $y=\min(a,b)$, which is a contradiction.

$\ref{Smetric1}\implies \ref{Smetric4}$ follows by a combination of Theorem~\ref{thm:sauer2} and Proposition~\ref{prop:Smetric}.
Finally, we show $\ref{Smetric4}\implies \ref{Smetric3}$. By Proposition~\ref{prop:ramseyhomo} we know that $\ordclasssup{\mathcal M}{S}$ forms an amalgamation class.
It remains to verify that the shadow $\mathcal M_S$ of $\ordclasssup{\mathcal M}{S}$ is also an amalgamation class.
Consider $\str{A},\str{B},\str{C}\in \mathcal M_S$ such that $\str{A}$ is a substructure of both $\str{B}$ and $\str{C}$.
Then it is possible to choose a convex order of $\str{A}$ and extend it to convex orders of $\str{B}$ and $\str{C}$ and use the amalgamation property of $\ordclasssup{\mathcal M}{S}$
to obtain an amalgamation of $\str{B}$ and $\str{C}$ over $\str{A}$.
\end{proof}
Applying Theorem~\ref{thm:sauercharacterisation} we can state the results elegantly in terms of Urysohn $S$-metric spaces:
\begin{corollary}
\label{cor:Smetricfin}
Let $S$ be a set of positive reals such that $S\cup\{0\}$ is closed. Then the following conditions are equivalent:
\begin{enumerate}
 \item The class $\ordclasssup{\mathcal M}{S}$ of all convexly ordered $S$-metric spaces is Ramsey.
 \item There exists a Urysohn $S$-metric space.
\end{enumerate}
\end{corollary}

\subsubsection{Ramsey classes with multiple linear orders}
\label{sec:manyorders}
In this section we focus on the special role of the order in our constructions.

Consider the class of all finite structures with two linear orders $\leq$ and
$\preceq$ (or, equivalently, the class of permutations: the order
$\leq$ represents the original order and the order $\preceq$ represents the
permutation).  It is not obvious how to describe this class as a multiamalgamation
class (techniques of Section~\ref{sec:posets} would apply only for classes
where $\leq$ agrees with $\preceq$).
In the following proposition we show a way of effectively splitting the order
$\leq$ into multiple linear orders free to each other. We proceed more generally.

Let $\K_1$, $\K_2$ be classes of finite structures in disjoint languages
$L_1$ and $L_2$ respectively. Denote by $L$ the language $L_1\cup L_2$.  The
\emph{free interposition of $\K_1$ and $\K_2$} is the class $\K$
containing all structures $\str{A}$ such that the $L_1$-shadow of $\str{A}$ is in 
$\K_1$ and the $L_2$-shadow of $\str{A}$ is in $\K_2$.

\begin{prop}
\label{prop:interpos}
Let $L_1$ and $L_2$ be disjoint languages both containing an order (\eg{}\ $\leq_1\in L_1$ and $\leq_2\in L_2$). Let $\mathcal R_1$ be the class of all finite ordered $L_1$-structures,
let $\mathcal R_2$ be the class of all finite ordered $L_2$-structures, let $\K_1$
be an $(\mathcal R_1,\mathcal U_1)$-multiamalgamation class and let $\K_2$ be an
$(\mathcal R_2,\mathcal U_2)$-multiamalgamation class.  Then the free
interposition $\K$ of $\K_1$ and $\K_2$ is Ramsey.
\end{prop}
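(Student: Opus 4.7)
The plan is to exhibit $\K$ as an $(\mathcal R,\mathcal U)$-multiamalgamation class in the sense of Definition~\ref{def:multiamalgamation} and then invoke Theorem~\ref{thm:mainstrongclosures}. Set $L=L_1\cup L_2$, let $\mathcal R$ be the class of all finite $L$-structures in which both $\rel{}{\leq}$ and $\rel{}{\leq_2}$ are linear orders, and put $\mathcal U=\mathcal U_1\cup \mathcal U_2$. The class $\mathcal R$ is Ramsey by Theorem~\ref{thm:NR}: we forbid the finitely many minimal obstacles to $\rel{}{\leq_2}$ being a linear order (irreflexivity, antisymmetry, and transitivity violations), each of which has at most three vertices, all pairwise comparable under $\rel{}{\leq}$, and is therefore ordered irreducible in the sense required by Theorem~\ref{thm:NR}.

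Conditions (1)--(3) of Definition~\ref{def:multiamalgamation} for $\K$ decouple cleanly along the two shadows, owing to the disjointness of $L_1$ and $L_2$: a structure is $\mathcal U$-closed iff each of its $L_i$-shadows is $\mathcal U_i$-closed, a $\mathcal U$-closed substructure corresponds to a pair of $\mathcal U_i$-closed substructures of the shadows, and a strong amalgamation of $\str{A},\str{B}_1,\str{B}_2\in \K$ is obtained by performing strong amalgamations of the $L_i$-shadows in $\K_i$, both realized on the common vertex set $B_1\cup_A B_2$.

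For the locally finite completion property (condition~(4)), given $\str{B}\in \K$ and $\str{C}_0\in \mathcal R$, set $n=\max(n_1,n_2)$ where $n_i=n(\str{B}_i,\str{C}_{0,i})$ is the bound furnished by condition~(4) for $\K_i$ applied to the $L_i$-shadows $\str{B}_i$ and $\str{C}_{0,i}$ of $\str{B}$ and $\str{C}_0$. Given a $\mathcal U$-closed $\str{C}$ with a homomorphism-embedding to $\str{C}_0$ such that every $n$-vertex substructure of $\str{C}$ admits a $\K$-completion, each shadow $\str{C}_i$ satisfies the analogous hypothesis for $\K_i$: any $n_i$-vertex substructure of $\str{C}_i$ is the $L_i$-shadow of a substructure of $\str{C}$ of size at most $n$, whose $\K$-completion has $L_i$-shadow a $\K_i$-completion. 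Condition~(4) for $\K_i$ therefore yields $\str{D}_i\in \K_i$ completing $\str{C}_i$ with respect to copies of $\str{B}_i$, via a map $f_i\colon C\to D_i$.

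The main obstacle is then to combine $\str{D}_1$ and $\str{D}_2$ into a single $\str{C}'\in \K$ completing $\str{C}$ with respect to copies of $\str{B}$. The plan is to first promote each $f_i$ to an injective map by a closure-aware analogue of Lemma~\ref{lem:homclosed}: if $f_i(u)=f_i(v)$ for $u\neq v$, then no copy of $\str{B}_i$ in $\str{C}_i$ contains both (as $f_i$ is an embedding on each copy), so strongly amalgamating $\str{D}_i$ with itself in $\K_i$ over the largest $\mathcal U_i$-closed substructure of $\str{D}_i$ avoiding $f_i(u)$ yields two separated preimages of $f_i(u)$, and hence a completion separating $u$ from $v$; iterating produces an injective $f_i$. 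Once both $f_i$ are injective we identify $C$ with $f_i(C)\subseteq D_i$ and let $\str{C}'$ sit on the pushout $D_1\cup_C D_2$, with the $L_i$-structure inherited from $\str{D}_i$ and extended arbitrarily to $D_{3-i}\setminus C$ by joint embedding in $\K_i$; the resulting $\str{C}'$ lies in $\K$, and the inclusion map $C\to C'$ restricts to an $L$-embedding on each copy of $\str{B}$ because it is an $L_i$-embedding on each copy by construction for both $i=1,2$. The principal technical difficulty is the closure-aware strengthening of Lemma~\ref{lem:homclosed}, which requires careful bookkeeping of how $\mathcal U_i$-closures are affected when a single vertex is excised; once this is established, Theorem~\ref{thm:mainstrongclosures} immediately delivers the Ramsey property of $\K$.
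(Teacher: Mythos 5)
Your plan is correct in spirit (reduce to Theorem~\ref{thm:mainstrongclosures} by building a multiamalgamation class), but it takes a genuinely different route from the paper and contains two real gaps, precisely at the points the paper's construction is designed to avoid.

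First, the base Ramsey class you propose is not a consequence of Theorem~\ref{thm:NR}. You take $\mathcal R$ to be the class of finite $L$-structures in which \emph{both} $\rel{}{\leq}$ and $\rel{}{\leq_2}$ are linear orders, and you claim this follows by forbidding the minimal obstacles to $\rel{}{\leq_2}$ being linear. But the obstacle to totality (two vertices not $\leq_2$-comparable) and the obstacle to transitivity ($a\leq_2 b\leq_2 c$ with $a,c$ not $\leq_2$-comparable) each contain a pair of vertices joined by no relation other than $\rel{}{\leq}$ itself, so they are \emph{not} ordered irreducible in the sense required by Theorem~\ref{thm:NR}. Showing that structures with two free linear orders form a Ramsey class is essentially the content of the present proposition applied to $\K_i=\mathcal R_i$, so the argument would be circular. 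And if you retreat to $\mathcal R$ being merely all ordered $L$-structures (which \emph{is} Ramsey by Theorem~\ref{thm:NR}), you lose the hypothesis needed to invoke condition~(4) for $\K_2$: the $L_2$-shadow of $\str{C}_0$ then need not lie in $\mathcal R_2$, since $\rel{C_0}{\leq_2}$ need not be linear.

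Second, the merging step is not established. After obtaining completions $\str{D}_1,\str{D}_2$ of the two shadows, you want to extend each $\str{D}_i$ over the vertices $D_{3-i}\setminus C$ by ``joint embedding in $\K_i$,'' but the strong amalgamation property of a multiamalgamation class only guarantees that a strong amalgamation is \emph{contained in} some member of $\K_i$, not that it \emph{is} one; taking the $\mathcal U_i$-closure to land back in $\K_i$ may introduce further new vertices, which then themselves need $L_{3-i}$-structure, and the two processes need not terminate on a common vertex set. Likewise, the injectivity-promotion step requires a ``closure-aware analogue of Lemma~\ref{lem:homclosed},'' but that lemma concerns strong completions, whereas condition~(4) of Definition~\ref{def:multiamalgamation} only delivers completions with respect to copies of $\str{B}$ (and the remark after the definition stresses exactly this distinction): identifications can genuinely occur and are not removable by amalgamating over ``the largest $\mathcal U_i$-closed substructure avoiding $f_i(u)$,'' which may be much smaller than $D_i\setminus\{f_i(u)\}$.

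The paper's proof dodges both problems at once by \emph{doubling the vertex set}: it extends $L$ by unary predicates $\rel{}{1},\rel{}{2}$ and binary closure relations $\rel{}{U_1},\rel{}{U_2}$, splits each structure into a $\rel{}{1}$-part carrying the $L_1$-relations and a $\rel{}{2}$-part carrying the $L_2$-relations, and uses the closure relations to encode a bijection between them. Then a \emph{single} linear order $\rel{}{\leq}$ on all vertices suffices; its restriction to the $\rel{}{2}$-part is automatically linear, so $\mathcal R$ can be taken to be all ordered $L^+$-structures (trivially Ramsey by Theorem~\ref{thm:NR}), and since the $L_1$- and $L_2$-relations live on disjoint parts, the completions of the two shadows can be performed independently with no merging at all. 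If you want to salvage your single-sorted approach, the honest route is to first establish the Ramsey property of two free linear orders separately (or cite it) and then face the merging problem directly, but at that point the paper's doubling construction is simply cleaner.
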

Because the notion of a locally finite subclass is more restrictive than the
notion of a multiamalgamation class, Proposition~\ref{prop:interpos} also holds in the case when $\K_1$ is a
locally finite subclass of $\mathcal R_1$ and $\K_2$ is a locally finite
subclass of $\mathcal R_2$.
\begin{proof}
We further extend our language $L=L_1\cup L_2$ to $L^+$ by two unary relations $\rel{}{1}$ and
$\rel{}{2}$ and two binary (closure) relations $\rel{}{U_1}$ and $\rel{}{U_2}$.

The basic idea of the proof is to split every structure $\str{A}\in \K$ into
its $L_1$-shadow $\str{A}_1$ and $L_2$-shadow $\str{A}_2$ and then to take the
``disjoint union'' of $\str{A}_1$ and $\str{A}_2$ in the language $L^+$
where vertices of $\str{A}_1$ are marked by $\rel{}{1}$ and vertices of
$\str{A}_2$ by $\rel{}{2}$. We moreover use the closure relations to describe the natural bijection
between vertices of $\str{A}_1$ and vertices of $\str{A}_2$. This
construction preserves substructures and thus the Ramsey property of such split
structures implies the Ramsey property of $\K$.

The class of such split structures is described as an $(\mathcal R,\mathcal U)$-multi{\-}amal{\-}gamation class as follows:

\begin{enumerate}
 \item The class $\mathcal R$ consists of all ordered $L^+$-structures. $\mathcal R$ is a Ramsey by Theorem~\ref{thm:models}.
 \item The closure description $\mathcal U$ consist of all pairs $(\rel{}{U_i},\str{R}^+_i)$ such that
$(\rel{}{U_i},\allowbreak \str{R}_i)\in U_1$ and $\str{R}^+_i$ is a lift of $\str{R}_i$ adding
every vertex to relation $\rel{}{1}$ and pairs $(\rel{}{U_j},\str{R}^+_j)$ such
that $(\rel{}{U_j},\str{R}_j)\in U_2$ and $\str{R}^+_j$ is a lift of $\str{R}_j$ adding
every vertex to relation $\rel{}{2}$.

Moreover, we extend the closure description to define a bijection between vertices in relation $\rel{}{1}$ and
vertices in relation $\rel{}{2}$: Every vertex in $\rel{}{1}$ has a closure defined by $\rel{}{U_1}$
and every vertex in $\rel{}{2}$ has a closure defined by $\rel{}{U_2}$.
\end{enumerate}
By combining the completion properties of $\K_1$ and $\K_2$ it easily follows that the class described
is an $(\mathcal R,\mathcal U)$-multiamalgamation class and thus by Theorem~\ref{thm:mainstrongclosures}
we get Proposition~\ref{prop:interpos}.
\end{proof}

\begin{remark}
A variant of Proposition~\ref{prop:interpos} was proved in~\cite{Bodirsky2015} for strong amalgamation classes.
However, for the first time we show that even free interpositions of classes with closures
are Ramsey. 
\end{remark}

\subsubsection{Totally ordered structures via incidence closure}
\label{sec:fatorders}
Let $\str{A}$ be a relational structure in a finite language $L$ with an order on its vertices $\leq_\str{A}$ (which is not a part of the language $L$).
Here we show how to handle such structures where moreover each relation is viewed as an ordered set and the embeddings need to preserve these orderings.

Assume that each of the sets $\rel{A}{}$, $\rel{}{}\in L$, is linearly ordered by $\leq^{\rel{}{}}_\str{A}$.
For the time being, we call $\str{A}$ together with the orderings $\leq_\str{A}$ and $\leq^{\rel{}{}}_\str{A}$, $\rel{}{}\in L$, a \emph{totally ordered structure} and denote it by $\vv{\str{A}}$.
For two totally ordered structures $\vv{\str{A}}$ and $\vv{\str{B}}$ we say that a function $f\colon\vv{\str{A}}\to\vv{\str{B}}$ is an \emph{embedding} if it is an embedding $\str{A}\to \str{B}$ which is also an embedding of all orders $\leq_\str{A}$ and $\leq^{\rel{}{}}_\str{A}$, $\rel{}{}\in R$. Explicitly, $f\colon A\to B$ is an embedding provided that the following are satisfied:
\begin{enumerate}
\item For every $\rel{}{}\in L$ it holds that $$(u_1,u_2,\ldots,u_{\arity{}})\in \rel{A}{}$$ if and only if $$(f(u_1),f(u_2),\ldots,f(u_{\arity{}}))\in \rel{B}{},$$
\item for every $\rel{}{}\in L$ it holds that
$$(u_1, u_2,\ldots,u_{\arity{}})\leq^{\rel{}{}}_\str{A}(v_1,v_2,\ldots, v_{\arity{}})$$ if and only if $$(f(u_1),f(u_2),\ldots, f(u_{\arity{}}))\leq^{\rel{}{}}_\str{B} (f(v_1), f(v_2),\ldots, f(v_{\arity{}})),$$
\item $u\leq_\str{A} v$ if and only if $f(u)\leq_\str{B} f(v).$
\end{enumerate}
Totally ordered structures are not relational structures per se. However, they can be easily interpreted as ordered relational structures and this interpretation paves the way to our approach:

For every relation $\rel{A}{}$  of arity $a=\arity{}$ of a totally ordered structure $\vv{\str{A}}$ we consider a relation  $\rel{A}{\leq}$ of arity $2a$ defined by:
$$(x_1, x_2,\ldots, x_a,y_1,y_2,\ldots, y_a)\in \rel{A}{\leq}$$ if and only if $$(x_1, x_2,\ldots, x_a)\leq^{\rel{}{}}_\str{A} (y_1, y_2,\ldots, y_a).$$
The order $\leq_\str{A}$ will be seen, as usual, as a binary relation of $\str{A}$.
The language of such interpretations is $L$ together with relations $\rel{A}{\leq}$ for every $\rel{}{}\in L$ and $\leq_\str{A}$. We will call such a relational structure $\TO(\vv{\str{A}})$.
Observe that $f\colon\vv{\str A}\to\vv{\str B}$ is an embedding of totally ordered structures if and only if it is an embedding $\TO(\vv{\str A})\to\TO(\vv{\str B})$ in the standard sense.

We will denote the extended language by $L,2L$.
Denote by $\TO(L)$ the class of all structures $\TO(\str{A})$ in the language $L,2L$.
We claim the following:
\begin{theorem}[Ramsey theorem for totally ordered structures]
\label{thm:TO}
\label{thm:fatorder}
$\TO(L)$ is a Ramsey class for every finite relational language $L$.
\end{theorem}
Before the proof of Theorem~\ref{thm:TO} let us add the following remark.
\begin{proof}[Proof of Theorem~\ref{thm:TO}]
Fix an arbitrary order $\leq_L$ of $L$.
Given a structure $\str{A}\in \TO(L)$ we describe its lift $\str{A}^+$ which we call the \emph{incidence closure} of $\str A$:
\begin{enumerate}
\item The vertex set of $\str{A}^+$ extends the vertex set of $\str{A}$ by a new vertex for every tuple in every relation. More precisely,
we add a vertex
 $v^{\rel{}{}}_{\vv{r}}$ for every $\rel{}{}\in L$ and $\vv{r}\in \rel{A}{}$.
\item The order $\leq_\str{A}^+$ extends the order of $\leq_\str{A}$ as follows:
\begin{enumerate}
\item For every $\rel{}{}\in L$ we put $v^{{\rel{}{}}}_{\vv{s}}\leq_{\str{A}^+}v^{{\rel{}{}}}_{\vv{r}}$ if and only if $\vv{s}\leq^{\rel{}{}}_\str{A} \vv{r}.$
\item For every $u\in A$ and $v\notin A$ we put $u\leq_{\str{A}^+}v$.
\item For every $\rel{}{1},\rel{}{2}\in L$ such that $\rel{}{1}<_L \rel{}{2}$ and every $v^{{\rel{}{1}}}_{\vv{s}}, v^{{\rel{}{2}}}_{\vv{r}}\in A^+$ we put 
$v^{{\rel{}{1}}}_{\vv{s}}\leq_{\str{A}^+}v^{{\rel{}{2}}}_{\vv{r}}.$
\end{enumerate}
\item For every $\rel{}{}\in L$ we add a function $\nbfunc{\str{A}^+}{\rel{}{}}$ of arity $\arity{}$ and we put $\nbfunc{\str{A}^+}{\rel{}{}}(\vv{s})=v^{{\rel{}{}}}_{\vv{s}}$  if and only if $\vv{s}\in \rel{A}{}.$
\end{enumerate}
Denote class of such lifts (with the incidence closure) by $\TO^+(L)$. After this reformulation we get that $\TO^+(L)$ is a Ramsey lift by a routine application of Theorem~\ref{thm:mainstrong}.
\end{proof}

\begin{remark}
It may seem at first glance that the natural way to prove Theorem~\ref{thm:TO} is to
show that $\TO(L)$ is a locally finite subclass of the class of all finite ordered $L,2L$-structures. This is however not the case
as there is no relationship between the orders $\leq_\str{A}$ and all the orders $\rel{A}{\leq}$.
\end{remark}
\begin{remark}
The incidence closure can be used to put an order on $n$-tuples in general. For example, the following class having a linear order on the neighbourhood of every vertex can be shown to be Ramsey by essentially the same argument:

Denote by $\mathcal Q\mathcal Q$ the class of finite structures $\str{A}$
with one binary relation $\leq_\str{A}$ and one ternary relation $\prec_\str{A}$ with the following properties:
\begin{enumerate}
 \item The relation $\leq_\str{A}$ forms a linear order on $A$, and
 \item for every vertex $a\in A$ the relation $\{(b,c):(a,b,c)\in \prec_\str{A}\}$ forms a linear oder on $A\setminus \{a\}$ (unrelated to $\leq_\str{A}$).
\end{enumerate}
$\mathcal{QQ}$ may be viewed as the class of all structures endowed with a local order on neighbourhoods.
\end{remark}

\subsection {Ramsey lifts of ages of $\omega$-categorical structures}
\label{sec:exampleslifts}
We end this paper by considering particular examples of classes which in fact provided the original motivation for this paper.
This section provides a rich spectrum of Ramsey classes defined by means of forbidden substructures.
We start with a detailed description of the Ramsey lift of the class of finite graphs with a given odd girth (\ie{}\ the size of the smallest odd cycle) and show how this particular
example fits both Theorem~\ref{thm:mainstrong} and Theorem~\ref{thm:main}. These results indicate that the case of forbidden homomorphism is well understood.
We then (in Section~\ref{sec:CSS}) turn our attention to classes defined by forbidden monomorphisms (such as forbidden subgraphs) where the situation is much more complicated even on the model-theoretic side (see \eg{}\ \cite{Cherlin2011}) and this is where we (again) have to use closures.

\subsubsection{Graphs omitting odd cycles of length at most $l$}
\label{sec:cycles}
Perhaps the simplest example of graph classes defined by means of forbidden homomorphisms is the class of all finite (undirected) graphs $\str{G}$ such that there is no homomorphism $\str{C}_l\to \str{G}$, where $\str{C}_l$ is a (graph) cycle on $l$ vertices for odd $l$, that is, the class of all finite graphs in $\Forb(\str{C}_l)$.

By Proposition~\ref{prop:ramseyhomo} we know that every Ramsey lift of such class must have
the amalgamation property. It is easy to see that the class of all finite graphs in $\Forb(\str{C}_l)$ is not
an amalgamation class for any odd $l\geq 5$ so a convenient lift is needed. We illustrated this by 
the smallest non-trivial example $l=5$ discussed already in Example~\ref{example:5cycle}. 

In full generality, an explicit homogenising (and also Ramsey) lift can be described as follows:
Fix an odd $l$. The language of graphs is extended to language $L_l$ by a linear order $\leq$
 and binary relations $\rel{}{2}, \rel{}{3}, \ldots, \rel{}{(l-1)/2}$.  
Given a finite graph $\str{G}\in \Forb(\str{C}_l)$, we define its lift $\str{G}^+$ as follows:
\begin{enumerate}
\item $\leq_{\str{G}^+}$ is an (arbitrary) linear order of $G$.
\item For every or every $1 < i\leq {l-1\over 2}$ it holds that ${u,v}\in \nbrel{\str{G}^+}{i}$ if and only if the graph distance of $u$ and $v$ is $i$. (Distance 1 is already represented by the relation $E_\str{G}$.)
\end{enumerate}
We call this lift the \emph{distance lift} of the graph $\str{G}$.

The lifted class $\K_{\str C_l}$ consists of all possible substructures of all above lifts of finite graphs in $\Forb(\str{C}_l)$.

\begin{remark}
A homogenisation of the class of all graphs in $\Forb(\str{C}_l)$ was first given by
Komj{\'a}th, Mekler and Pach~\cite{Komjath1988} (a corrected proof appears in~\cite{Komjath1999}).
As an early example of a universal graph defined by forbidden homomorphisms, it was later generalised in~\cite{Cherlin1999}, see also~\cite{Cherlin1994, Cherlin1996} for negative results. An alternative homogenisation (in the form of even-odd metric spaces) is given in~\cite{Hubicka2009}. The homogenisation presented here appears in the catalogue of metrically homogeneous
graphs~\cite{Cherlin2013} and is the only one (up to bi-definability) leading to an existentially complete
$\omega$-categorical graph universal for $\Forb(\str{C}_l)$.
\end{remark}

\begin{theorem}
\label{thm:kcycle}
The class $\K_{\str C_l}$ is a Ramsey class. Every lift $\str{A}\in \K_{\str C_l}$ can be viewed as a metric space with distances truncated by $l+1\over 2$. 
More precisely, the following function $d_{\str{A}}\colon A\times A\to \{0,1,2,\ldots {l+1\over 2}\}$ is a metric:
$$d_{\str{A}}(u,v)=
\begin{cases}
0&\hbox{ if }u=v,\\
1&\hbox{ if }(u,v)\in E_{\str{A}},\\
d&\hbox{ if }(u,v)\in \nbrel{\str{A}}{d}, 2\leq d\leq {l-1\over 2}, and\\
{(l+1)\over 2}&\hbox{ otherwise.}
\end{cases}$$
\end{theorem}
As an illustration of the versatility of our techniques we give two different proofs of Theorem~\ref{thm:kcycle}.
\begin{proof}[Proof (using Theorem~\ref{thm:mainstrong})]
We first give a strong amalgamation procedure for $\K_{\str C_l}$: Let $\str{B}_1,\str{B}_2\in \K_{\str C_l}$. Without loss of generality we can assume that both are distance lifts of graphs in $\Forb(\str{C}_l)$ and $\str{A}$ is a structure induced by both $\str{B}_1$ and $\str{B}_2$ on $A=B_1\cap B_2$.
Construct a graph $\str{G}$ as the free amalgamation of $\sh(\str{B}_1)$ and
$\sh(\str{B}_2)$ over $\sh(\str{A})$. That is, $G=B_1\cup B_2$ and $(u,v)\in
E_\str{G}$ if and only if either $(u,v)\in E_{\str{B}_1}$ or $(u,v)\in
E_{\str{B}_2}$. Denote by $\str{C}$ the distance lift of $\str{G}$. We claim that $\str{C}$ is a strong amalgamation of $\str{B}_1$ and $\str{B}_2$ over $\str{A}$.
Because for every pair of vertices $u,v\in A$ we have $d_{\str{B}_1}(u,v)=d_{\str{B}_2}(u,v)=d_\str{A}(u,v)$,
 it is easy to see that the identities are embeddings from $\str{B}_1$ and $\str{B}_2$ to $\str{C}$.  It remains to verify that $\str{G}$ does not contain any odd cycles of
length at most $l$.

Assume, to the contrary, that there exists a cycle
$\widetilde{\str{C}}_k$, $k\leq l$ odd, that is a subgraph of
$\str{G}$.  Among all such choices of $\widetilde{\str{C}}_k$ pick one
with minimal $k$.  Because neither of $\str{B}_1$ and $\str{B}_2$ has 
homomorphic images of $\str{C}_l$ we know that $\widetilde{\str{C}}_k$ contains some
vertices from $B_1\setminus A$  and some from $B_2\setminus A$. Because $\str{G}$ is a free amalgamation
and $\widetilde{\str{C}}_k$ is connected, there are also some vertices in $A\cap \widetilde{C}_k$ which form a vertex cut of $\widetilde{\str{C}}_k$.

Now consider a path
in $\widetilde{\str{C}}_k$ on vertices $v_1,v_2,\ldots, v_n$, such that $n\leq {k-1\over 2}$, $v_1,v_n\in A$
and $v_2,v_3,\ldots,v_{n-1}\notin A$. Without loss of generality assume that the whole path is contained in $\str{B}_1$.
We show that $d_\str{A}(v_1,v_n)=n$:
\begin{enumerate}
 \item Clearly $d_{\str{B}_1}(v_1,v_n)=d_{\str{B}_2}(v_1,v_n)=d_\str{A}(v_1,v_n)\leq n$.
\item Assume $d_\str{A}(v_1,v_n)<n$. In this case we create a cycle $\widetilde{\str{C}}'$ from $\widetilde{\str{C}}_k$ by replacing vertices $v_1,v_2,\ldots, v_n$ with the path
of length $d_\str{A}(v_1,v_n)$ in $\str{G}$. $\widetilde{\str{C}}'$ is a homomorphic image of a cycle of length $k'=k-n+d_\str{A}(v_1,v_n)$ in $\str{G}$. Because $k$ is minimal, we know that $k'$ is even.
It follows that $n+d(v_1,v_n)$ is odd and that vertices $v_1$ and $v_n$ are connected in $\str{B}_1$ both by a path of length $n$ and a path of length $d(v_1,v_n)$.
 Combining these paths together yields
a homomorphic image of an odd cycle in $\str{B}_1$ of length $d(v_1,v_n)+n\leq k$ which is a contradiction
with $\str{B}_1\in \Forb(\str{C}_l)$.
\end{enumerate}
It follows that for every two vertices $v_1,v_n\in \widetilde{C}_k\cap A$ such that their distance within $\widetilde{\str{C}}_k$
is at most $k-1\over 2$ there is a path of the corresponding length in both $\str{B}_1$
and $\str{B}_2$.

Because there
is no copy of $\widetilde{C}_k$ in $\str{B}_1$ or $\str{B}_2$,
we conclude that there is a path  $w_1,w_2,\ldots, w_m$, such that $m>{k-1\over 2}$,
$w_1,w_m\in A$, $w_2,w_3,\ldots, w_{m-1}\notin A$.
Again, without loss of generality we assume that this path is in $\str{B}_1$. Because there
is only one such long path in $\widetilde{C}_k$, we obtain a homomorphic copy of $\widetilde{C}_k$
in $\str{B}_1$ which is a contradiction with $\str{B}_1\in \Forb(\str{C}_l)$.
This finishes the proof that  $\str{C}$ is the strong amalgamation of
$\str{B}_1$ and $\str{B}_2$ over $\str{A}$.

To apply Theorem~\ref{thm:mainstrong}, we observe that $\K_{\str{C}_l}$ is a locally finite
subclass of the class of all ordered structures in the language $L_l$ similarly as for $S$-metric spaces.
\end{proof}
Now we show how the same lift can be shown to be Ramsey using Theorem~\ref{thm:main}.
\begin{proof}[Proof (using Theorem~\ref{thm:main})]
Denote by $\mathcal C_l$ the family containing all possible weak orderings of $\str{C}_l$, the structure containing one vertex with a loop and two structures containing two vertices and a directed edge (in both possible orderings). (The last three structures describe the class of all unoriented graphs.)

It immediately follows that the class $\Age(Forb(\mathcal C_l))$ (of all 
 finite ordered structures in $\Forb(\mathcal C_l)$) is the class of all ordered graphs with no homomorphic image of $\str{C}_l$ and
the existence of a precompact Ramsey lift is given by Theorem~\ref{thm:main}.
However, this result claims more in that it derives a particular lift in the
form of maximal $\F$-lifts as given by Definition~\ref{defn:maximal}.
It remains to check that this homogenisation is equivalent to the one described in the
statement of Theorem~\ref{thm:main}.

The pieces of $\str{C}_l$ (see Definition~\ref{defn:piece}) are all paths of lengths $2, 3, \ldots, l-2$ rooted in
the endpoints. Homomorphism-embedding images of a path of length $k$ rooted in the endpoints are then walks of length at most $k$. The pieces of structures in $\mathcal C_l$ are weakly ordered paths, but because
we consider all possible weak orders, we know that all weakly ordered paths of the same length are $\sim$-equivalent. In the following we can thus
speak only of pieces formed by paths of given length.

Because the construction of a homogenising lift adds relations describing individual pieces and tuples in these relations describe roots of homomorphism-embeddings,
at first glance it seems that the lift constructed is thus more expressive than one we ask for:  It measures the distance of walks of length up to $l-2$
(instead of $l+1\over 2$) and in addition every pair of vertices $(u,v)$ can be in many binary relations.  Here we need to use maximality (as defined in Definition~\ref{defn:maximal}).

We proceed as follows.
Given a pair of vertices $u,v$  of a maximal lift $\str{A}^+$ and its witness $\str{W}$, we verify that the set of
relations (\ie{}\ the set of lengths of permitted walks between $u$ and $v$ in $\str{A}$) is fully determined by the graph distance $l_\str{W}(u,v)$ in $\str{W}$ and that $l_\str{W}(u,v)\leq {(l+1)\over 2}$:
\begin{enumerate}
 \item If the distance $l_\str{W}(u,v)=k$ is even, the existence of walks of all even distances greater than $k$ follows trivially; there is always a homomorphism from the path of length $k+2$ to the path of length $k$ mapping endpoints to endpoints. By maximality there are also all odd walks of distances greater than or equal to $l-k+2$. If such a walk was missing, it would be possible to extend $\str{W}$ by a path of length $l-k+2$ connecting $u$ and $v$ without obtaining a homomorphism-embedding copy of $\str{C}_l$, which would contradict maximality of $\str{A}^+$. We also know that there are no shorter odd walks because every combination of two walks between $u$ and $v$ of length $l$ and $l-k$ produce a homomorphism-embedding copy of $\str{C}_l$.

It follows that (for a given even distance $k$) there is only one possible set of relations between vertices $u$ and $v$ in the maximal lift.
 \item The case of odd distance follows in full analogy.
 \item There are no pairs of vertices of $\str{A}^+$ with distance greater than $(l+1)\over 2$ in $\str{W}$: For any pair of vertices in a greater distance one can add a path of length $(l+1)\over 2$ without introducing a short cycle, again contradicting  maximality of $\str{A}^+$.
\end{enumerate}
\end{proof}
\begin{remark}
While in this simple case both proofs appear similarly complex, in less trivial scenarios it is often a lot easier to analyse the structure of pieces compared to giving an explicit homogenisation and amalgamation procedure. Consider, for example, the class of all graphs having no homomorphic image of the Petersen graph. Pieces of this graph are depicted in Figure~\ref{Petersoni}.
\end{remark}

\subsubsection {Forbidden monomorphisms (Cherlin--Shelah--Shi classes)}
\label{sec:CSS}

The classes defined by forbidden homomorphism-embeddings (\ie{}\ classes $\Forb(\F)$
used in Section~\ref{sec:lifts}) can be seen as a special case of classes defined by
forbidden monomorphisms (or, equivalently, by forbidden non-induced
substructures).  In this section, we treat those monomorphism-defined classes which can be handled by
an application of Theorem~\ref{thm:main}.

Recall that we denoted by $\Forbm(\mathcal M)$ the
class of all finite or countable structures $\str{A}$ such that there is no monomorphism
from any $\str{M}\in \mathcal M$ to $\str{A}$.
The question of the existence of an $\omega$-categorical  universal structure in $\Forbm(\mathcal M)$ was considered by Cherlin, Shelah and
Shi~\cite{Cherlin1999} who gave both a sufficient and a necessary condition  in the form
of local finiteness of the algebraic closure stated below as Theorem~\ref{thm:CSS}.
While the  existence of a universal structure in monomorphism-defined classes
was intensively studied in a series of papers
\cite{Komjath1988,Cherlin1994,Cherlin1996,Cherlin1997,Furedi1997b,Cherlin2007,Cherlin2007a,Cherlin2007b,Cherlin2001,Cherlin2011,Cherlin2015},
it still remains open if the question whether there exists a universal structure in the class of all graphs in $\Forbm(\mathcal M)$ is decidable, even for families $\mathcal M$ consisting of a single finite graph. 
 On the positive side,~\cite{Cherlin1999} proves that for every finite family $\mathcal M$ of finite connected structures which is closed for homomorphic images the class of all graphs in $\Forbm(\mathcal M)$ contains an universal structure (of course in this case $\Forbm(\mathcal M)=\Forbh(\mathcal M)=\Forb(\mathcal M)$). Theorem~\ref{mainthm2}
generalises this result for infinite families.

\begin{figure}
\centering
\includegraphics{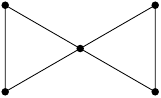}
\caption{The bowtie graph.}
\label{fig:bowtie}
\end{figure}
 It was our analysis of bowtie-free graphs~\cite{Hubivcka2014} (a bowtie is the graph depicted in Figure~\ref{fig:bowtie}) which led to the notion of closure description (Definition~\ref{def:uclosed}). Here we use it to 
 obtain Ramsey lifts of classes defined by forbidden monomorphisms in a greater
generality. This extends the family of known Ramsey classes by non-trivial new examples,
such as forbidden 2-bouquets~\cite{Cherlin2007}, paths~\cite{Komjath1988,Cherlin1999}, complete graphs adjacent to a path~\cite{Komjath1988,Cherlin1999}, bowties adjacent to a path~\cite{Cherlin1999} and in fact all known cases in the work-in-progress catalogue~\cite{Cherlinb}. Some of these classes are really exotic ones. For example, the class of all graph omitting the graph depicted in Figure~\ref{fig:quintafly} contains an $\omega$-categorical universal graph and it is a singular example: It is not possible to change the size of one clique in the picture and again obtain a class containing a universal graph! While in the case of bowtie-free graphs, it is possible to manually analyse the structure of graphs in the class (and this analysis is a core of~\cite{Hubivcka2014}), it is hard to imagine performing such an analysis for the graph in Figure~\ref{fig:quintafly}.
\begin{figure}
\centering
\includegraphics{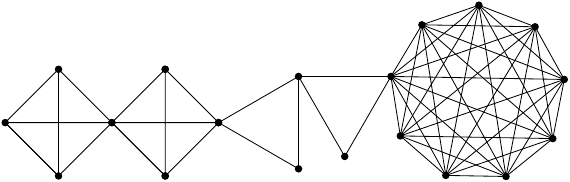}
\caption{An example of a forbidden subgraph (by monomorphism).}
\label{fig:quintafly}
\end{figure}

It is only fitting that we end this paper by combining the Ramsey methods developed here with perhaps the most successful result about the existence of $\omega$-categorical universal objects provided by~\cite{Cherlin1999}. In fact, all these pieces fit together very well.
First we briefly review the terminology of~\cite{Cherlin1999}.

\begin{definition}
\label{def:algebraic}
Let $L$ be a relational language, let $\str{A}$ be an $L$-structure and let $S$ be a finite subset of $A$.  The
\emph{algebraic closure of $S$ in $\str{A}$}, denoted by $\ACl_\str{A}(S)$, is the
set all vertices $v\in A$ for which there is a formula $\phi$ in the language $L$ with $\vert S\vert +1$
variables such that $\phi(\vv{S},v)$ is true and there are only finitely many
vertices $v'\in A$ such that $\phi(\vv{S},v')$ is also true. (Here $\vv{S}$
is an arbitrary ordering of the vertices of $S$.)
\end{definition}
In the following we will use functions to explicitly represent algebraic closures, thereby
obtaining the strong amalgamation property.

We say that a structure $\str{A}$ is \emph{algebraically closed} in a structure $\str{U}$
if for every embedding $e\colon\str{A}\to \str{U}$ it holds that $\ACl_\str{U}(e(A))=e(A)$ (where $e(A)=\{e(x);X\in A\}$).
The algebraic closure in $\str{U}$ is \emph{locally finite} if there exists a
function $f\colon\mathbb N\to \mathbb N$ such that $\vert \ACl_\str{U}(S)\vert \leq
f(\vert S\vert )$ for every finite $S\subseteq U$.

\begin{theorem}[Cherlin, Shelah, Shi~\cite{Cherlin1999}]
\label{thm:CSS}
Let $\mathcal M$ be a finite family of finite connected relational structures.  There
is an $\omega$-categorical universal structure   in $\Forbm(\mathcal M)$
if and only if the algebraic closure  in existentially complete structures in
 $\Forbm(\mathcal M)$ is locally finite. 
\end{theorem}

We make use of the following consequence of Prak's theorem~\cite{Prak,Cherlin1999} which we include without proof.
\begin{lemma}[Lemma 5 of \cite{Cherlin1999}]
\label{lem:lemma5}
Let $L$ be a relational language,
let $\mathcal M$ be a finite collection of finite $L$-structures, let $\str{U}$ be
an $\omega$-categorical existentially complete universal structure in $\Forbm(\mathcal M)$ and
let $\str{A}$ be a finite substructure of $\str{U}$. Then the following are equivalent:
\begin{enumerate}
\item $\str{A}$ is not algebraically closed in $\str{U}$.
\item There is $\str{M}\in \mathcal M$ and a substructure $\str{M}'$ of $\str{U}$ containing $\str{A}$ such that the following hold:
\begin{enumerate}
\item $\str M$ has a homomorphism to $\str M'$, and
\item there is a structure $\str S$ which can be obtained by a series of free amalgamations of $\vert M\vert$ copies of $\str M'$ over $\str A$ such that $\str M$ has a monomorphism to $\str S$.
\end{enumerate}
\end{enumerate}
\end{lemma}
\begin{corollary}
\label{cor:algc2}
Let $\mathcal M$ be a class of finite connected relational structures such that $\Forbm(\mathcal M)$
contains an $\omega$-categorical existentially complete universal structure $\str{U}$.
Let $\str{A}$ be finite algebraically closed substructure of $\str{U}$ and let $\str{B}$ be a substructure of $\str{U}$ containing $\str{A}$.
Then $\str{U}$ contains a strong amalgamation of $\str{B}$ and $\str{B}$ over $\str{A}$
\end{corollary}
\begin{proof}
Assume, to the contrary, that $\str{U}$ does not contain a strong amalgamation of $\str{B}$ and $\str{B}$ over $\str{A}$.
Because $\str{U}$ is existentially complete it follows that there is a monomorphism $f$ from $\str{M}\in \mathcal M$
to the free amalgamation $\str{U}'$ of $\str{U}$ and $\str{B}$ over $\str{A}$. Without loss of generality (by sufficiently extending $\str{B}$) we can assume
that there is a monomorphism $f'$ from $\str{M}$ to the free amalgamation $\str{C}$ of $\str{B}$ and $\str{B}$ over $\str{A}$.
Using Lemma~\ref{lem:lemma5} this contradicts $\str{A}$ being algebraically closed.
\end{proof}
\begin{theorem}
\label{thm:CSSramsey}
Let $\mathcal M$ be a set of finite connected structures in a relational language $L$ such that $\Forbm(\mathcal M)$
contains an existentially complete $\omega$-categorical universal structure $\str{U}$.
Further assume that for every $\str{M}\in \mathcal M$ at least one of the following conditions holds:
\begin{enumerate}
  \item There is no homomorphism-embedding from $\str{M}$ to $\str{U}$, or
  \item $\str{M}$ can be constructed from irreducible structures by a series of free amalgamations over irreducible substructures.
\end{enumerate}
Then the class of all finite algebraically closed substructures of $\str{U}$ has a precompact Ramsey lift.
(By the standard homogenisation argument it also follows that $\Age(\Forbm(\mathcal M))$ has a precompact Ramsey lift.)
\end{theorem}
\begin{proof}
First we expand $L$ by an order. Let $\vv{\str{U}}$ be
the lift of $\str U$ adding a generic linear order.
We further extend language $L$ by necessary functions to represent the
algebraic closure of every finite ordered irreducible substructure of $\vv{\str{U}}$
and by relational symbols denoting each orbit of every ordered irreducible substructure of $\vv{\str{U}}$.
This can be done in an automorphism preserving way by adding only finitely many function symbols of every arity:
Observe that because $\vv{\str{U}}$ is ordered and the algebraic closure in $\str{U}$ is locally finite, for every
$a\geq 1$ there is $f(a)$ determining the largest size of the algebraic closure of a substructure of $\vv{\str{U}}$
with at most $a$ vertices. We thus introduce function symbols of arity $a$ denoted by $\func{}{a,i}$ for every $1\leq i\leq f(a)$.

 Denote by $L^+$ the resulting language and denote by $\vv{\str{U}}^+$ the
$L^+$-lift of $\vv{\str{U}}$ adding the newly introduced functions representing the closures and relations representing the orbits of order-irreducible substructures.
The first is done by putting $\nbfunc{\vv{\str{U}}^+}{a,i}(v_1,v_2,\ldots, v_a)=v$ if and only if $a\geq 1$, $1\leq i\leq f(a)$, all vertices in $(v_1,v_2,\ldots, v_a)$ are distinct and $v$ is the $i$-th vertex of the algebraic closure of $\{v_1,v_2,\ldots, v_a\}$ in the linear order of $\vv{\str{U}}$.
The second is done the same way as in the standard homogenization but only for finite irreducible substructures of $\str{U}$.

Let $\mathcal F^1_\mathcal M$ be the class of all ordered irreducible structures $\str{A}\notin \Age(\vv{\str{U}}^+)$.
Denote by $n$ the size of the largest structure in $\mathcal M$ and by $N=f(n)$ the bound on the size of the algebraic closure of a structure on at most $n$ vertices. 
Let $\F^2_\mathcal M$ denote the class of all weakly ordered structures with at most $N^{2^n}$ vertices
which have no homomorphism-embedding to $\vv{\str{U}}^+$.
Note that $N^{2^n}$ is an easy upper bound on the size of the closure of an $N$-vertex set in $\Forb(\F^1_\mathcal M)$.
The size of irreducible structures that are closures of substructures with at most $n$ vertices is at most $N$ and the closure of a reducible structure
is a result of the corresponding free amalgamation.

Observe that $\mathcal F_\mathcal M=F^1_\mathcal M\cup F^2_\mathcal M$ is a regular family of structures because $F^1_\mathcal M$ consists of irreducible structures (and
thus yields no pieces) and $F^2_\mathcal M$  is finite.
Now apply Theorem~\ref{thm:main} for $\mathcal F_\mathcal M$
to obtain a precompact Ramsey lift $\mathcal K^+_\mathcal M$ of the class $\mathcal K_\mathcal M$ of all finite ordered structures in $\Forb(\mathcal F_\mathcal M)$.
We claim that  the class  $\mathcal K^+_\mathcal M$ is a precompact Ramsey lift of $\Age(\Forbm(\mathcal M))$.

Fix $\str{C}\in \K^+_\mathcal M$ and assume, to the contrary, the existence of $\str{M}\in \mathcal M$ such that there is a monomorphism $m$ from $\str{M}$ to the shadow $\sh(\str{C})$.
Because $\str{C}\in \Forb(\mathcal F_\mathcal M)$, there is a homomorphic image of $\str{M}$  in $\Forbm(\mathcal M)$. From our assumptions it follows that $\str{M}$ can be constructed from irreducible structures by a series of free amalgamations over irreducible substructures.  Denote by $\str{M}_1, \str{M}_2, \ldots, \str{M}_n$ the irreducible structures used to build $\str{M}$ and denote by $\str{M}'_1, \str{M}'_2, \ldots, \str{M}'_n$
the closures of $m(\str{M}_1), m(\str{M}_2),\allowbreak \ldots,\allowbreak m(\str{M}_n)$ in $\str{C}$. Observe that all those structures are also irreducible.

Next construct $\str{M}'$ and a homomorphism $f\colon \str{M}'\to \vv{\str{U}}^+$ by following the same amalgamations which are used to construct $\str{M}$ but with the structures $\str{M}'_1, \str{M}'_2, \ldots, \str{M}'_n$ over the closures of the corresponding amalgamation bases.
Denote by $\str{M}''_1, \str{M}''_2, \ldots, \str{M}''_n$ the corresponding copies of $\str{M}'_1, \str{M}'_2, \ldots, \str{M}'_n$ in $\str{M}'$.
The homomorphism can be constructed by following the amalgamation procedure because of the following properties of our construction:
\begin{enumerate}
\item $\str{M}'$ is a result of series of amalgamations of finite ordered irreducible structures $\str{M}''_1,\allowbreak  \str{M}''_2,\allowbreak  \ldots,\allowbreak  \str{M}''_n$ over their irreducible ordered substructures.
\item $\mathcal F$ contains all finite irreducible ordered structures which are not substructures of $\vv{\str{U}}^+$ and thus each of $\str{M}''_1, \str{M}''_2, \ldots, \str{M}''_n$ embeds to $\vv{\str{U}}^+$.
\item For every pair $\str{M}''_i$ and $\str{M}''_j$ we know that whenever $M''_i\cap M''_j\neq \emptyset$, the substructure $\str{N}$ induced by $\str{M}'$ on  $M''_i\cap M''_j$ is irreducible.
Thus $\str{N}$ corresponds to a unique orbit of the automorphism group of $\vv{\str{U}}^+$ (because we extended $\vv{\str{U}}$ in an automorphism-preserving way by explicitly denoting every orbit of such substructures).
Consequently, every embedding of $\str{M}''_i$ to $\vv{\str{U}}^+$ extends to the amalgamation of $\str{M}''_i$ and $\str{M}''_j$ over $\str{N}$.
\end{enumerate}
Observe that there is also a monomorphism from $\str{M}$ to the shadow of $\str{M}'$.

To arrive to a contradiction, choose a homomorphism-embedding $h'\colon\str{M}'\to \str{U}^+$ which maximises the number of vertices of $h'(M')$.
Because $h'$ is not a monomorphism it follows that there are distinct vertices $u_1,u_1\in M'$ such that $h'(u_1)=h'(u_2)$.
Because $\str{M}'$ has a tree-like structure, there is a unique path $\str{M}''_{i_1},\str{M}''_{i_2},\ldots,\str{M}''_{i_\ell}$ such that $u_1\in M''_{i_1}$, $u_2\in M''_{i_\ell}$ and $M''_{i_j}\cap M''_{i_{j+1}}\neq \emptyset$ for every $1\leq j<\ell$.
We can further assume that $u_1$ and $u_2$ were chosen so that $\ell$ is minimal.

Because $h'$ is a homomorphism-embedding and thus it is not possible that both $u_1,u_2\in \str{M}''_{i_1}$, we get that $\ell>1$.  We seek for $j$ such that the
intersection of $\str{M}'_{i_j}$ and $\str{M}'_{i_{j+1}}$ forms a cut $R$ of
$\str{M}'$ which does not contain $u_1$ and $u_2$. Such a cut exists because of the following observations:
\begin{enumerate}
\item $\str{M}'$ induces an irreducible substructure on every such cut,
and because $h'$ is a homomorphism-embedding, no such cut can contain both $u_1$ and $u_2$. 
\item If $\str{M}'_{i_j}$ contains $u_1$ and $\str{M}'_{i_j+1}$ contains $u_2$ then their intersection contains neither $u_1$ and $u_2$.  
\end{enumerate}

Consequently there is a cut $R$ of $\str{M}$ separating $u_1$ and $u_2$. Because $\ell$ is minimised
we know that $h'(R)$ contains neither $h'(u_1)$ nor $h'(u_2)$.
By Corollary~\ref{cor:algc2} we know that $\str{U}^+$ contains a strong amalgamation $\str{D}$ of $h'(\str{M}')$ and $h'(\str{M}')$ over $h'(R)$. This is a contradiction with the fact that $h$ maximises $|h'(M)|$: we can construct a homomorphism-embedding $h''$ to $\str{D}$ such that $u_1$ maps to the first copy of $\str{M}'$ and $u_2$ to the second.
\end{proof}

\begin{remark}
The order needs to be handled carefully in the proof. It may seem more natural
to first homogenise $\str{U}$ and then add the order. This however often leads
to a more complex structure. If the order is introduced first and
assuming that the language
$L$ contains no relations of arity greater than $k$, the isomorphism type of
closure of a substructure can be uniquely determined by the isomorphism type of
its substructures of size at most $k$. An example of such a class is given in~\cite{Hubivcka2014}.

In addition, lifting by the free order will not give a lift with the lift property for classes
with non-trivial closure.  Such lifts needs more
detailed analysis of the structure of this closure. The special case of the bowtie-free
graphs is analysed in~\cite{Hubivcka2014}.
\end{remark}

\begin{remark}
It is conjectured in~\cite{Cherlin2015,Cherlin2011} that every graph $\str{G}$
such that there exists an $\omega$-categorical universal graph for the class of all graphs in $\Forbm(\{\str{G}\})$ has all 2-connected components irreducible.
If this conjecture is true, Theorem~\ref{thm:CSSramsey} shows the existence
of a precompact Ramsey lift for every class of graphs $\Forbm(\{\str{G}\})$ with an
$\omega$-categorical universal graph. So it seems this is as far as we can go: The existence of a Ramsey lift is equivalent to $\omega$-categoricity of the universal graph under the conjecture.
\end{remark}

\begin{remark}
If $\mathcal M$ consists only of structures constructed from
irreducible structures by a series of free amalgamations over irreducible
substructures the existence of $\omega$-categorical universal structure is
actually necessary in Theorem~\ref{thm:CSSramsey} only to establish the
precompactness of the lift.  Even in the cases where the algebraic closure is not
locally finite, the same technique as above can be used for the class of
homogenising lifts of the structures (which is not precompact and the resulting
\Fraisse{} limit will not be universal, only universal for finite structures of
the age). The resulting Ramsey lift will be a precompact lift of this
homogenising lift.

On the other hand, the class $\mathcal C_{\mathrm{girth}\geq 5}$ given in Example~\ref{example:C4}
has a binary closure:
For every pair of vertices there is at most one vertex connected to
both of them. It is easy to consider a lift adding a partial binary function
$\func{}{C}$  which maps every pair of vertices to the unique vertex connected to both of them if such a vertex exists.
(Note that the closure is not locally finite~\cite{Fueredi1997} and there is no
$\omega$-categorical universal graph for $\Forbm(\str{C}_4)$).  This class has
strong amalgamation (over closed structures), however the existence of a precompact Ramsey lift
is open.
\end{remark}
\begin{remark}
The conditions of Theorem~\ref{thm:CSSramsey} given on the family $\mathcal M$ can be
generalised. Cherlin~\cite{Cherlin2011} gave an example of a class $\Forbm(\mathcal M)$
with non-unary algebraic closure. It is easy to show that the techniques
used in the proof of Theorem~\ref{thm:CSSramsey} apply for this class, too.
\end{remark}

\section{Acknowledgements}
We would like to thank to Andr\'es Aranda, Manuel Bodirsky, Peter Cameron, Gregory Cherlin, David M.~Evans, and
Dugald Macpherson for discussions and remarks that improved quality of this
paper, and particularly to Mat\v ej Kone\v cn\'y.

A large part of work was done while the first author had PIMS Postdoctoral
Fellow position at University of Calgary under lead of Claude Laflamme,
Norbert Sauer and Robert Woodrow. Discussions in Calgary were essential to gain
understanding of the model-theoretic aspects.  We are also
grateful to the anonymous referee, David Bradley-Williams and Miodrag Soki{\'c}.

\bibliographystyle{plain}

\bibliography{ramsey.bib}
\end{document}